\documentclass[letterpaper]{article}

\usepackage{amsmath}
\usepackage{amsfonts}
\usepackage{amsthm}
\usepackage{amssymb}
\usepackage{datetime}
\usepackage{color}

\theoremstyle{plain}
\newtheorem{theorem}{Theorem}[section]
\newtheorem{lemma}[theorem]{Lemma}
\newtheorem{corollary}[theorem]{Corollary}
\theoremstyle{definition}
\newtheorem{definition}[theorem]{Definition}
\newtheorem{problem}[theorem]{Problem}
\newtheorem{example}[theorem]{Example}
\newtheorem{remark}[theorem]{Remark}
\newtheorem{notation}[theorem]{Notation}

\numberwithin{equation}{section}

\newcommand{\all}{\hbox{for all}}
\newcommand{\All}{\hbox{For all}}
\newcommand{\bra}[2]{\langle#1,#2\rangle}

\newcommand{\bigcupn}{\bigcup\nolimits}
\newcommand{\Bra}[2]{\big\langle#1,#2\big\rangle}

\newcommand{\dbs}{^{**}}
\newcommand{\dist}{\hbox{\rm dist}}
\newcommand{\dom}{\hbox{\rm dom}}
\newcommand{\eighth}{\ts\frac{1}{8}}

\newcommand{\eps}{\varepsilon}

\newcommand{\F}{{\mathbb F}}
\newcommand{\fourth}{\ts\frac{1}{4}}

\newcommand{\half}{{\textstyle\frac{1}{2}}}
\newcommand{\hunder}{\underbar{$h$}}
\newcommand{\kunder}{\underbar{$k$}}
\newcommand{\I}{\mathbb I}
\newcommand{\ifff}{\Longleftrightarrow}
\newcommand{\infn}{\inf\nolimits}
\newcommand{\intr}{\hbox{\rm int}}

\newcommand{\limn}{\lim\nolimits}

\newcommand{\limsupn}{\limsup\nolimits}
\newcommand{\lin}{\hbox{\rm lin}}
\newcommand{\lr}{\Longrightarrow}
\newcommand{\LT}{{\wt L}}

\newcommand{\minn}{\min\nolimits}
\newcommand{\on}{\hbox{on}}
\newcommand{\PC}{{\cal PC}}
\newcommand{\PCLSC}{{\cal PCLSC}}
\newcommand{\PCLSCQ}{{\cal PCLSC}_q}
\newcommand{\PCQ}{{\cal PC}_q}

\newcommand{\qlr}{\quad\Longrightarrow\quad}
\newcommand{\qLt}{q_{\wt L}}

\newcommand{\quand}{\quad\hbox{and}\quad}
\newcommand{\rbar}{\,]{-}\infty,\infty]}

\newcommand{\RR}{\mathbb R}
\newcommand{\rl}{\Longleftarrow}
\newcommand{\rLt}{r_{\wt L}}

\newcommand{\sqs}{^{*@}}

\newcommand{\supn}{\sup\nolimits}
\newcommand{\T}{{\bf T}}
\newcommand{\toto}{\rightrightarrows}
\newcommand{\ts}{\textstyle}

\newcommand{\wh}{\widehat}
\newcommand{\wt}{\widetilde}

\newcommand{\xbra}[2]{\lfloor#1,#2\rfloor}
\newcommand{\Cor}{Corollary~\ref}
\newcommand{\Def}{Definition~\ref}
\newcommand{\Defs}{Definitions~\ref}
\newcommand{\Ex}{Example~\ref}
\newcommand{\Lem}{Lemma~\ref}
\newcommand{\Lems}{Lemmas~\ref}

\newcommand{\Rem}{Remark~\ref}
\newcommand{\Sec}{Section~\ref}
\newcommand{\Secs}{Sections~\ref}
\newcommand{\Thm}{Theorem~\ref}
\newcommand{\Thms}{Theorems~\ref}

\title{``Densities'' and maximal monotonicity I}
\author{
Stephen Simons
\thanks{
Department of Mathematics, University of California, Santa Barbara, CA\ 93106-3080, U.S.A.
Email: \texttt{stesim38@gmail.com}.}}

\date{}

\begin{document}

\maketitle

\begin{abstract}\noindent
We discuss ``Banach SN spaces'', which include Hilbert spaces, negative Hilbert spaces, and the product of any real Banach space with its dual. We introduce ``L-positive'' sets, which generalize monotone multifunctions from a Banach space into its dual. We introduce the concepts of ``$r_L$--density'' and its specialization ``quasidensity'': the closed quasidense monotone multifunctions from a Banach space into its dual form a (generally) strict subset of the maximally monotone ones, though all surjective maximally monotone and all maximally monotone multifunctions on a reflexive space are quasidense.  We give a sum theorem and a parallel sum theorem for closed monotone quasidense multifunctions under very general constraint conditions.   That is to say, quasidensity obeys a very nice calculus rule.   We give a short proof that the subdifferential of a proper convex lower semicontinuous function on a Banach space is quasidense, and deduce generalizations of the Brezis--Browder theorem on linear relations to non reflexive Banach spaces. We also prove that any closed monotone quasidense multifunction has a number of other very desirable properties.
\end{abstract}

{\small \noindent {\bfseries 2010 Mathematics Subject Classification:}
{Primary 47H05; Secondary 47N10, 52A41, 46A20.}}

\noindent {\bfseries Keywords:} Banach SN space, $L$--positive set, $r_L$--density, quasidensity, multifunction, maximal monotonicity, sum theorem, subdifferential, negative alignment, monotone linear relation, Brezis--Browder theorem.

%\pagestyle{myheadings}\markright{\rm\jobname, run on \today\ at \currenttime}

%SN maps and Banach SN spaces
\section{Introduction}
\setlength{\belowdisplayskip}{4pt}
\setlength{\belowdisplayshortskip}{4pt}
\setlength{\abovedisplayskip}{4pt}
\setlength{\abovedisplayshortskip}{4pt}
This paper falls logically into four parts.   In the first part, \Secs{SNsec}--\ref{PHIsec}, \ref{LINsec} and part of \Sec{ZAGsec}, we discuss ``Banach SN spaces'', ``$L$--positive sets'',\break ``$r_L$--density'', ``touching functions'' and the functions $\Phi_A$ and $\Theta_A$ determined by an $L$--positive set.   The second part, \Secs{EEsec}--\ref{SUMSsec}, \ref{LINMONsec} and \ref{FITZCHARsec}, is about Banach SN spaces of the special form $E \times E^*$, where $E$ is a nonzero real Banach space.   This part includes a short proof of a strict generalization of Rockafellar's result (see \cite{RTRMMT}) that the subdifferential of a proper convex lower semicontinuous function on a Banach space is maximally monotone, a sum theorem, a parallel sum theorem, and generalizations of the Brezis--Browder theorem on monotone linear relations.     The third part, \Sec{BRsec}, is about negative alignment conditions.   The fourth part, \Sec{ZAGsec}, is about a generalization of an inequality of Zagrodny.
\par
We now give an overview of the second part of this paper in which the notation will be familiar to the greatest number of readers, namely \Secs{EEsec}--\ref{SUMSsec}, \ref{LINMONsec} and \ref{FITZCHARsec}.   We give the initial definitions in \Sec{EEsec}.   The rather cumbersome definition of $r_L$--density in this special situation appears in \eqref{EE1}.   We use the term ``quasidensity'' instead of ``$r_L$--density'' in this context.   There are two other conditions equivalent to quasidensity in this paper, which can be found in\break \Thm{OORthm} and \Thm{NIBRthm}.   More can be found in \cite[Theorems 13.3, 13.6, 15.2 and 18.5.]{RLv7}
\par
It is shown in \Thm{RLMAXthm}(a) that a closed, monotone, quasidense set is maximally monotone.   \Thm{RTRthm}, \Cor{SURJcor} and \Thm{RLMAXthm}(b) show that  the closed, monotone, quasidense multifunctions do not form {\em too} small a class of sets.   We now discuss these three results.
\par
In \Thm{RTRthm}, we prove that the subdifferential of a proper, convex, lower semicontinuous function is quasidense.   The main nontrivial building blocks in the proof of \Thm{RTRthm} are Rockafellar's formula for the conjugate of a sum and the Cauchy sequence argument used in the proof of \Thm{RLEQthm}\big((b)$\lr$(c)\big).    By contrast with the proof given in \Thm{RTRthm}, it also is possible to give a ``direct proof''  using none of the results of \Sec{RLsec} after \Def{RLdef}, but using instead a separation theorem in $E \times E^*$, the Br{\o}ndsted--Rockafellar\break theorem and Rockafellar's formula for the subdifferential of a sum.   This ``direct proof'' is not much harder than the most recent proof of Rockafellar's original result that we have seen in print.   Since the formula for the subdifferential of a sum is very close to the formula for the conjugate of a sum, this leads one to speculate that the proof of \Thm{RLEQthm}\big((b)$\lr$(c)\big) is somehow related to the Br{\o}ndsted--Rockafellar theorem (or, more likely, Ekeland's variational principle).   We refer the reader to \cite[Theorem 8.4, p. 15]{RLv1} for more details of this ``direct proof''.   We do not discuss it any more in this paper because the result established in Simons--Wang, \cite[Theorem 3.2]{SW} shows that the (appropriate) subdifferential of an (appropriate) proper (and not necessarily convex) lower semicontinuous function is quasidense.   In other word, $r_L$--density and quasidensity have some interest outside the context of $L$--positive or monotone sets, as the case may be.
\par
Finally, we mention the novel use of \Thm{RTRthm} in \Lem{LINlem} to obtain results on linear sets (see below).   We do not know if there are similar applications of \Thm{RTRthm} to the nonlinear case.   It would be very intriguing if there were such applications.
\par
In \Cor{SURJcor} we prove that every surjective maximally monotone\break multifunction is quasidense, and in \Thm{RLMAXthm}(b) we prove that if $E$ is\break reflexive then every maximally monotone multifunction on $E$ is quasidense.  
\par
A useful counterexample for the nonsurjective, nonreflexive or non--\break subdifferential case is the {\em tail operator} (see \Ex{TAILex}), which is a\break maximally monotone linear operator from $E = \ell_1$ into $\ell_\infty = E^*$ that is not quasidense.   This example brings into stark relief the difference of behavior between surjective maximally monotone multifunctions and maximally monotone multifunctions with full domain.   
\par
\Sec{SUMSsec} is devoted to a sum theorem and a parallel sum theorem for closed, monotone quasidense multifunctions:   \Thm{STDthm} contains a result that implies that if $S$ and $T$ are closed monotone quasidense multifunctions and the effective domains $D(S)$ and $D(T)$ satisfy the Rockafellar constraint condition then\break $S + T$ is closed, monotone and quasidense.   \Thm{STRthm} contains an analogous but more technical result when we have information about the ranges $R(S)$ and $R(T)$.   Since closed, monotone, quasidense monotone multifunctions are maximally monotone, \Thm{STDthm} presents a stark contrast to the situation for maximally monotone multifunctions: it is still apparently not known whether the sum of two maximally monotone multifunctions satisfying the Rockafellar constraint condition is maximally monotone.   \Thm{STRthm} uses the concept of the ``Fitzpatrick extension'' of a closed, monotone quasidense multifunction, which is defined in \Def{FITZdef}, and further developed in \Sec{FITZCHARsec}.
\par
The quasidensity of subdifferentials is used in \Sec{LINsec} to obtain results about closed {\em linear} $L$--positive subspaces of Banach SN spaces.   These results are\break applied in \Sec{LINMONsec} to  monotone linear relations.   Specifically, it is proved in \Thm{LINMONthm} that if $A$ is a closed monotone linear relation with adjoint relation $A^\T$ then $A$ is quasidense if, and only if, $A^\T$ is monotone if, and only if $A^\T$ is maximally monotone.   This extends results established in \cite{BBWYLIN} and \cite{BBWYBB} by Bauschke, Borwein, Wang and Yao for general Banach spaces which, in turn,\break extend a result proved by Brezis and Browder in \cite{BB} for reflexive Banach spaces.   It is also worthy of note that \Thm{HTthm} provides a two--dimensional quadrant of examples of maximally monotone linear operators that fail to be quasidense.
\par
It is interesting to observe that the analysis of \Secs{LINsec} and \ref{LINMONsec} uses the quasidensity of subsets of $B \times B^* = (E \times E^*) \times (E \times E^*)^* = E \times E^* \times E^* \times E\dbs$.   The relatively simple notation seems to hide the actual complexity of the objects being considered.
\par
We now discuss the third part of the paper, \Sec{BRsec}, in which we discuss negative alignment conditions. \Thm{MFBRthm}(c) contains a version of the Br{\o}ndsted--Rockafellar theorem for closed, monotone, quasidense sets extending part of \cite[Theorem~4.2]{ASBR}. In \Thm{MFBRthm}(d), we prove that the projections of such a sets have convex closures. Finally, in \Thm{NIBRthm}, we give a criterion in terms of negative alignment for a closed monotone set to be quasidense.   In \Def{ANAdef}, we define
maximal monotonicity of ``type (ANA)''.   This concept was introduced, though unnamed, in \cite[Theorem~4.5, pp.\ 367--369]{SAM} as a property of subdifferentials.   We prove in \Thm{ANAthm} that a closed monotone quasidense set is maximally monotone of type (ANA).   We do not have an example of a maximally monotone set that is not of type (ANA).    
\par
There is one issue that we wish to mention briefly.  ``quasidensity'' \big(see \eqref{EE1}\big) does not require $E\dbs$ for its definition, and $E\dbs$ is not mentioned explicitly in the statements of \Thms{STDthm}, \ref{NIBRthm} and \ref{ANAthm}, but our proofs of all of these results use \Thm{CONJTOUCHthm}, which does depend on ($B^*$ hence) $E\dbs$, at one point or another.   This raises the question whether there are proofs of any of these results that do not depend on $E\dbs$.
\par
We now discuss the analysis in the first part of this paper, \Secs{SNsec}--\ref{PHIsec} and \ref{LINsec}, which provides the theoretical underpinnings for the results described above.   A glance at the condition for the ``quasidensity'' of subsets of $E \times E^*$ in \eqref{EE1} should convince the reader that the sheer length of the expression in this condition would make the concept hard to study.   In \Secs{SNsec}--\ref{PHIsec}, we show how to embed the analysis in a more general situation (``Banach SN spaces'') for which the notation is much more concise.   The definition of $r_L$--density in this more general situation can be found in \Def{RLdef}.
\par
Banach SN spaces are defined in \Def{SNdef}.   Banach SN spaces possess a quadratic form (denoted by $q_L$), and much of the analysis in \Secs{SNsec}--\ref{RLsec} is devoted to a study of those proper convex functions that dominate this quadratic form \big(denoted by $\PCQ(B)$\big).   If $f$ is such a function, the equality set is denoted by $\big\{B| f = q_L\big\}$.   The nonnegative function $r_L$ is  defined to be $\half\|\cdot\|^2 + q_L$.
\par
``$L$--positive sets'' (which generalize monotone subsets of $E \times E^*$) are defined in \Sec{LPOSsec}.   In \Sec{RLsec}, we introduce the concept of ``$r_L$--density''.   The first main result here is \Thm{RLEQthm}, in which we give two conditions equivalent to the $r_L$--density of a set of the form $\big\{B| f = q_L\big\}$.   The proof of the nontrivial part of \Thm{RLEQthm} is motivated by  Voisei--Z\u{a}linescu, \cite[Theorem~2.12, p.\ 1018]{VZ}.  Our analysis goes by way of the concept of ``touching function'', defined in \Def{TOUCHdef}.   This concept was used in \cite{VZ} in the $E \times E^*$ case, though unnamed.   The second main result in \Sec{RLsec} is the ``theorem of the touching conjugate", \Thm{Kthm}.  
\par
The main result in \Sec{SLsec} is \Thm{CONJTOUCHthm}, in which we give a characterization of the touchingness of a function in terms of its conjugate and the function $s_L$ defined on $B^*$ in \Def{SLdef}.   The rather arcane definition of $s_L$ is obtained by working backwards from \Thm{CONJTOUCHthm}(a), but it reduces to the simple form exhibited in \Lem{EESLlem} in the $E \times E^*$ case.  
\par
If $A$ is an $L$--positive subset of a Banach SN space, we define in \Sec{PHIsec} functions $\Phi_A$ and $\Theta_A$, which will be used extensively in what follows.   $\Phi_A$ is a generalization to Banach SN spaces of the ``Fitzpatrick function'' of a monotone set, which was originally introduced in \cite{FITZ}, but lay dormant until it was rediscovered by  Mart\'\i nez-Legaz and Th\'era in \cite{MLT}.   If $A$ is maximally $L$--positive, we give conditions in \Def{MARKdef} for an element $g$ of $\PC(B^*)$ to be a ``marker function'' for $A$, and we show in \Thm{MARKthm} how marker functions can be used to characterize the $r_L$--density of $A$.   Marker functions have implications for the Fitzpatrick extension, which are discussed in \Sec{FITZCHARsec}. 
\par
\Sec{LINsec} is about a closed linear $L$--positive subspace, A, of a Banach SN space, $B$, and its polar subspace, $A^0 \subset B^*$.    The main result here, in\break \Thm{LINthm}, is that $A$ is $r_L$--dense if, and only if, $\sup s_L(A^0) \le 0$.   This is the specific result (already alluded to) that depends on \Thm{RTRthm} for its proof, and is applied to monotone relations in \Sec{LINMONsec}.
\par
In \Sec{ZAGsec}, we show how Banach SN spaces lead to a generalization of a inequality due to Zagrodny, which was used to prove that the closure of the domain and the range of maximally monotone operator of type (NI) is convex.     It was worthy of note that Zagrodny established these results before the approach via ``type (ED)'' was known.   \big(As we have already mentioned, the corresponding results appear in this paper in \Thm{MFBRthm}(d).\big)
\par
The author would like to thank Mircea Voisei, Constantine Z\u{a}linescu, Maicon Marques Alves and Benar Svaiter, whose birthday presents \cite{VZ} and \cite{ASNI} had a considerable influence on the writing of this paper.
%:\Sec{SNsec}
\section{SN maps and Banach SN spaces}\label{SNsec}
We start off by introducing some Banach space notation.
%:   \Def{PCdef}
\begin{definition}\label{PCdef} 
If $X$ is a nonzero real Banach space and $f\colon\ X \to \rbar$, we write $\dom\,f$ for the set $\big\{x \in X\colon\ f(x) \in \RR\big\}$.   $\dom\,f$ is the {\em effective domain} of $f$.   We say that $f$ is {\em proper} if $\dom\,f \ne \emptyset$.   We write $\PC(X)$ for the set of all proper convex functions from $X$ into $\rbar$ and $\PCLSC(X)$ for the set of all proper convex lower semicontinuous functions from $X$ into $\rbar$.   We write $X^*$ for the dual space of $X$ \big(with the pairing $\bra\cdot\cdot\colon X \times X^* \to \RR$\big).  If $f \in \PCLSC(X)$ then, as usual, we define the {\em Fenchel conjugate}, $f^*$, of $f$ to be the function on $X^*$ given by
\begin{equation}\label{FSTAR}
x^* \mapsto \supn_X\big[x^* - f\big]\qquad(x^* \in X^*).
\end{equation}
If $g \in \PC(X^*)$ then we define the {\em Fenchel preconjugate}, $^*g$, of $g$ to be the function on $X$ given by
\begin{equation}\label{GPRESTAR}
x \mapsto \supn_{x^* \in X^*}\big[\bra{x}{x^*} - g(x^*)\big]\qquad(x \in X).
\end{equation}
We write $X\dbs$ for the bidual of $X$ \big(with the pairing $\bra\cdot\cdot\colon X^* \times X\dbs \to \RR$\big).   If $f \in \PCLSC(X)$ and $f^* \in \PCLSC(X^*)$, we define $f\dbs\colon X\dbs \to \rbar$ by $f\dbs(x\dbs) := \sup_{X^*}\big[x\dbs - f^*\big]$.   If $x \in X$, we write $\wh x$ for the canonical image of $x$ in $X\dbs$, that is to say\quad $(x,x^*) \in X \times X^* \lr \bra{x^*}{\wh x} = \bra{x}{x^*}$.\quad   We write $X_1$ for the closed unit ball of $X$.   If $Y \subset X$, we write $\I_Y$ for the {\em indicator function of $Y$}, defined by $\I_Y(x) = 0$ if $x \in Y$ and $\I_Y(x) = \infty$ if $x \in X \setminus Y$.\break   If $f,g\colon X \to \,[{-}\infty,\infty]$, then we write $\big\{X|f = g\big\}$ and $\big\{X|f \le g\big\}$ for the sets $\big\{x \in X,\ f(x) = g(x)\big\}$ and $\big\{x \in X,\ f(x) \le g(x)\big\}$, respectively.
\smallbreak
If $E$ and $F$ are nonzero Banach spaces then we define the projection maps $\pi_1$ and $\pi_2$ by $\pi_1(x,y) := x$ and $\pi_2(x,y) := y$ \big($(x,y) \in E \times F$\big).
\end{definition}
We will use the following result in \Thm{RLEQthm}:
%:   \Lem{EXNlem}

\begin{lemma}[A boundedness result]\label{EXNlem}
Let $X$ be a nonzero real Banach space and $f \in \PC(X)$.   Suppose, further, that $m := \infn_{x \in X}\big[f(x) + \half\|x\|^2\big] \in \RR$, $y,z \in X$, $f(y) + \half\|y\|^2 \le m + 1$ and $f(z) + \half\|z\|^2 \le m + 1$.   Then $\|y\| \le \|z\| + 3$.
\end{lemma}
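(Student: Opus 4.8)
The plan is to run a single convexity estimate at the midpoint $\half(y+z)$, play it off against the definition of $m$ as an infimum, and then reduce everything to an elementary quadratic inequality in the two numbers $a := \|y\|$ and $b := \|z\|$.

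First I would record what the hypotheses give directly: $f(y) \le m + 1 - \half a^2$ and $f(z) \le m + 1 - \half b^2$. Convexity of $f$ then yields
\[
f\big(\ts\half(y+z)\big) \le \half f(y) + \half f(z) \le m + 1 - \fourth a^2 - \fourth b^2,
\]
while applying the definition of $m$ to the point $\half(y+z)$ gives $f\big(\half(y+z)\big) \ge m - \half\big\|\half(y+z)\big\|^2$. Combining these two and cancelling $m$ leaves
\[
0 \le 1 - \fourth a^2 - \fourth b^2 + \half\big\|\ts\half(y+z)\big\|^2 .
\]

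Next, since $X$ is an arbitrary Banach space, the parallelogram law is unavailable, so I would just use the triangle inequality $\big\|\half(y+z)\big\| \le \half(a+b)$, whence $\half\big\|\half(y+z)\big\|^2 \le \eighth(a+b)^2$. Substituting this into the last display and expanding, the $a^2$, $ab$, $b^2$ terms collect into $-\eighth(a-b)^2$, so we obtain $\eighth(a-b)^2 \le 1$, i.e.\ $(a-b)^2 \le 8$. Therefore $\|y\| - \|z\| = a - b \le \sqrt8 = 2\sqrt2 \le 3$, which is precisely $\|y\| \le \|z\| + 3$.

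There is no serious obstacle here: the whole argument is one convexity inequality followed by routine algebra. The only point worth flagging is the absence of the parallelogram identity in a general Banach space, which forces the use of the triangle inequality and produces the constant $2\sqrt2$; rounding it up to $3$ is what gives the clean statement as written.
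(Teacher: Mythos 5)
Your proposal is correct and is essentially the paper's own argument: a convexity estimate at the midpoint $\half(y+z)$ played against the definition of $m$, followed by the triangle inequality and the algebraic rearrangement yielding $\big[\|y\|-\|z\|\big]^2 \le 8$. The only difference is cosmetic bookkeeping in how the quadratic terms are collected.
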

\begin{proof}
We have $m \le f\big(\half y + \half z\big) + \half\|\half y + \half z\|^2 \le \half f(y) + \half f(z) + \eighth\big[\|y\| + \|z\|\big]^2$.   Thus $m + \eighth\big[\|y\| - \|z\|\big]^2 \le \half f(y) + \half f(z) + \fourth\|y\|^2 + \fourth\|z\|^2$.   Consequently,\break $m + \eighth\big[\|y\| - \|z\|\big]^2 \le \half\big(f(y) + \half\|y\|^2\big) + \half\big(f(z) + \half\|z\|^2\big) \le \half(m + 1) + \half(m + 1)$.   Thus $\big[\|y\| - \|z\|\big]^2 \le 8$, which gives the required result.
\end{proof}
We now introduce {\em SN maps} and {\em Banach SN spaces} \big(which were called\break {\em Banach SNL spaces} in \cite{LINPOS}\big).
%:   \Def{SNdef}
\begin{definition}\label{SNdef}
Let $B$ be a nonzero real Banach space.   A {\em SN map on $B$} (``SN'' stands for ``symmetric nonexpansive''), is a linear map $L\colon\ B \to B^*$ such that
\begin{equation}\label{SYM}
\|L\| \le 1\quand\all\ b,c\in B, \bra{b}{Lc} = \bra{c}{Lb}.
\end{equation}
A {\em Banach SN space} $(B,L)$ is a nonzero real Banach space $B$ together with a SN map $L\colon\ B \to B^*$.  From now on, we suppose that $(B,L)$ is a Banach SN space.   We define the even functions $q_L$ and $r_L$ on $B$ by\quad $q_L(b) := \half\bra{b}{Lb}$ (``$q$'' stands for ``quadratic'') and $r_L := \half\|\cdot\|^2 + q_L$.   Since $\|L\| \le 1$, for all $b \in B$, $|q_L(b)| = \half|\bra{b}{Lb}| \le \half\|b\|\|Lb\| \le \half\|b\|^2$, so that
\begin{equation}\label{RLPOS}
0 \le r_L \le \|\cdot\|^2\ \on\ B.
\end{equation}
For all $b,d \in B$, $|\half\|b\|^2 - \half\|d\|^2| = \half\big|\|b\| - \|d\|\big|\big(\|b\| + \|d\|\big) \le \half\|b - d\|\big(\|b\| + \|d\|\big)$ and, from \eqref{SYM},
$|q_L(b) - q_L(d)| = \half|\bra{b}{Lb} - \bra{d}{Ld}| =\half\big|\bra{b - d}{L(b + d)}\big| \le \half\|b - d\|\|b + d\|$.  Consequently,\quad $|r_L(b) - r_L(d)| \le \|b - d\|\big(\|b\| + \|d\|\big)$,\quad thus
\begin{equation}\label{XOTAfive}
r_L(b) \le \|b - d\|\big(\|b\| + \|d\|\big) + r_L(d)\quand r_L\hbox{ is continuous.}
\end{equation}
\end{definition}
%:   \Not{Qnot}
\begin{notation}\label{Qnot}
We write
$$\PCQ(B) := \big\{f \in \PC(B)\colon\ f \ge q_L\ \on\ B\big\}$$
and
$$\PCLSCQ(B) := \big\{f \in \PCLSC(B)\colon\ f \ge q_L\ \on\ B\big\}.$$
\end{notation}
\Lem{ROOTlem} below will be used in \Lem{Llem}(a) and \Thm{RLEQthm}.
%:   \Lem{ROOTlem}
\begin{lemma}\label{ROOTlem}
Let $(B,L)$ be a Banach SN space, $f \in \PCQ(B)$ and $a,c \in B$.   Then
$$-q_L(a - c) \le 2(f - q_L)(a) + 2(f - q_L)(c).$$
\end{lemma}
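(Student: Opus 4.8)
We want to show $-q_L(a-c) \le 2(f-q_L)(a) + 2(f-q_L)(c)$ for $f \in \PCQ(B)$ and $a,c \in B$. The natural starting point is convexity of $f$ evaluated at the midpoint $\half a + \half c$, together with the fact that $f \ge q_L$. Applying $f \ge q_L$ at the midpoint and convexity at the midpoint gives
$$q_L\big(\ts\half a + \half c\big) \le f\big(\ts\half a + \half c\big) \le \ts\half f(a) + \half f(c).$$
The key algebraic fact to bring in is the parallelogram-type identity for the quadratic form $q_L$: since $q_L(b) = \half\bra{b}{Lb}$ with $L$ symmetric, $q_L$ is a quadratic form, so
$$q_L\big(\ts\half a + \half c\big) + q_L\big(\ts\half a - \half c\big) = \ts\half q_L(a) + \half q_L(c),$$
which (multiplying by $4$ and using $q_L(\half x) = \fourth q_L(x)$) rearranges to $q_L(a+c) + q_L(a-c) = 2q_L(a) + 2q_L(c)$, hence $\fourth q_L(a-c) = \half q_L(a) + \half q_L(c) - q_L\big(\half a + \half c\big)$.

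**Assembling the estimate.** Substituting the midpoint bound into this identity: from $q_L\big(\half a + \half c\big) \le \half f(a) + \half f(c)$ we get
$$\ts\fourth q_L(a-c) = \ts\half q_L(a) + \half q_L(c) - q_L\big(\ts\half a + \half c\big) \ge \ts\half q_L(a) + \half q_L(c) - \half f(a) - \half f(c) = -\ts\half(f-q_L)(a) - \half(f-q_L)(c).$$
Thus $\fourth q_L(a-c) \ge -\half\big[(f-q_L)(a) + (f-q_L)(c)\big]$, i.e. $-q_L(a-c) \le 2(f-q_L)(a) + 2(f-q_L)(c)$, which is exactly the claim. So the whole proof is: (1) convexity of $f$ at the midpoint; (2) $f \ge q_L$ at the midpoint; (3) the quadratic identity for $q_L$; (4) rearrange.

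**Anticipated obstacle.** There is no serious obstacle here — the only thing to be careful about is the scaling in the quadratic identity ($q_L$ homogeneous of degree $2$, so $q_L(\half x) = \fourth q_L(x)$, and the midpoint identity must be written with the right constants), and the possibility that $f(a)$ or $f(c)$ equals $+\infty$, in which case the right-hand side is $+\infty$ (note $(f-q_L)(a) \ge 0$ always since $f \in \PCQ(B)$, so no $\infty - \infty$ issue arises) and the inequality is trivial. Hence one may harmlessly assume $a, c \in \dom f$ when doing the arithmetic. The symmetry of $L$ is what makes $q_L$ an honest quadratic form and is the one hypothesis that must be invoked for the parallelogram identity.
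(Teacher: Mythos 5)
Your proof is correct and follows essentially the same route as the paper: the paper's one-line argument is exactly the chain $-q_L(a-c) = q_L(a+c) - 2q_L(a) - 2q_L(c) = 4q_L\big(\half a + \half c\big) - 2q_L(a) - 2q_L(c) \le 4f\big(\half a + \half c\big) - 2q_L(a) - 2q_L(c) \le 2f(a) + 2f(c) - 2q_L(a) - 2q_L(c)$, i.e.\ the same parallelogram identity, degree-2 homogeneity, $f \ge q_L$ at the midpoint, and midpoint convexity that you use. Your explicit remark about the trivial case $f(a) = \infty$ or $f(c) = \infty$ is a harmless extra precaution that the paper leaves implicit.
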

\begin{proof}
$-q_L(a - c) = q_L(a + c) - 2q_L(a) - 2q_L(c) = 4q_L(\half a + \half c) - 2q_L(a) - 2q_L(c) \le 4f(\half a + \half c) - 2q_L(a) - 2q_L(c) \le 2f(a) + 2f(c) - 2q_L(a) - 2q_L(c)$.
\end{proof}
%:   \Rem{ROOTrem}
\begin{remark}\label{ROOTrem}
The following result stronger than \Lem{ROOTlem} was proved in\break \cite[Lemma~2.6, p.\ 231]{SSDMON}: if $f \in \PCQ(B)$ and $a,c \in B$ then
$$-q_L(a - c) \le \Big[\sqrt{(f - q_L)(a)} + \sqrt{(f - q_L)(c)}\Big]^2.$$
\end{remark}
If $B$ is any Banach space then $(B,0)$ is obviously a Banach SN space, $q_0 = 0$ and $r_0 = \half\|\cdot\|^2$.  There are many more interesting examples of Banach SN spaces.   The following are extensions of the examples in \cite[Examples~2.3 and 2.5, pp.\ 230--231]{SSDMON}.   More examples can be derived from \cite[Remark~6.7, p.\ 246]{SSDMON} and \cite{GMS}.   The significant example which leads to results on monotonicity appeared in \cite[Example~6.5, p.\ 245]{SSDMON}. We will return to it in Example~\ref{EEex} of this paper.   We note that some of the above examples were expressed in term of the bilinear form $\xbra{\cdot}{\cdot}\colon\ (b,c) \mapsto \bra{b}{Lc}$ rather than the map $L$.
%:   Example~\ref{Hex}
\begin{example}\label{Hex}
Let $B$ be a Hilbert space with inner product $(b,c) \mapsto \bra{b}{c}$ and $L\colon B \to B$ be a nonexpansive self--adjoint linear operator.   Then $(B,L)$ is a Banach SN space.   Here are three special cases of this example:
\smallbreak
(a)\enspace $\lambda \in\,]0,1\,]$ and, for all $b \in B$, $Lb = \lambda b$.   Here $r_L(b) = \half(1 + \lambda)\|b\|^2$.
\smallbreak
(b)\enspace $\lambda \in \,]0,1\,]$ and, for all $b \in B$, $Lb = - \lambda b$.   Here $r_L(b) = \half(1 - \lambda)\|b\|^2$.
\smallbreak
(c)\enspace $\lambda \in \,]0,1\,]$, $B = \RR^3$ and $L(b_1,b_2,b_3) = \lambda(b_2,b_1,b_3)$.   Here
$$r_L(b_1,b_2,b_3) = \half\big(b_1^2 + 2\lambda b_1b_2 + b_2^2 + (1 + \lambda)b_3^2\big).$$
\end{example}
%$L$--positive sets
\section{$L$--positive sets}\label{LPOSsec}
Let $A \subset B$.   We say that $A$ is {\em $L$--positive} \big(\cite[Section~2, pp.\ 604--606]{LINPOS}\big) if $A \ne \emptyset$ and\quad $a,c \in A \lr q_L(a - c) \ge 0$.\quad In Example~\ref{Hex}(a), all nonempty subsets of $B$ are $L$--positive and, in Example~\ref{Hex}(b), the only $L$--positive subsets of $B$ are the singletons. In Example~\ref{Hex}(c) when $\lambda = 1$, the $L$--positive sets are explored in \cite[Example~3.2(c), p.\ 262]{POLAR}, \cite[Example~2.3(c), p. 606]{LINPOS} (and other places).
%:   \Def{FATdef}
\begin{definition}\label{FATdef}
Let $(B,L)$ be a Banach SN space and $f \in \PC(B)$.   We define the function $f^@$ on $B$ by
\begin{equation}\label{FAT}
f^@(b) := f^*(Lb) = \supn_B\big[Lb - f\big]\qquad(b \in B).
\end{equation}
\end{definition}
\Lem{Llem} contains three fundamental properties of Banach SN spaces, and will be used in \Thm{RLEQthm}, \Thm{Kthm}, \Lem{PHIAlem}, \Lem{IFMARKlem},\break \Thm{INVARthm} and \eqref{AFMON}.   \Lem{Llem}(a) is suggested by Burachik--Svaiter,\break \cite[Theorem~3.1, pp. 2381--2382]{BS} and Penot, \cite[Proposition 4\big((h)$\lr$(a)\big), pp. 860--861]{PENOT}, and is equivalent to \cite[Lemma~2.9, p.\ 232]{SSDMON}.   \Lem{Llem}(b,c) are equivalent to \cite[Lemma~2.12(a,b), p.\ 233]{SSDMON}.
%:   \Lem{Llem}
\begin{lemma}\label{Llem}
Let $(B,L)$ be a Banach SN space and $f \in \PCQ(B)$.   Then:
\smallbreak
\noindent
{\rm(a)}\enspace If $\big\{B|f = q_L\big\} \ne \emptyset$ then $\big\{B|f = q_L\big\}$ is an $L$--positive subset of $B$.
\smallbreak
\noindent
{\rm(b)}\enspace Let $a,b \in B$ and $f(a) = q_L(a)$.   Then $q_L(a) \ge \bra{b}{La} - f(b)$.
\smallbreak
\noindent
{\rm(c)}\enspace $\big\{B|f = q_L\big\} \subset \big\{B|f^@ = q_L\big\}$.
\end{lemma}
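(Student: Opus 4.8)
The plan is to prove the three parts in order, using only the definitions of $q_L$, $f^@$, $L$-positivity, and the hypothesis $f \in \PCQ(B)$, i.e. $f \ge q_L$ on $B$.

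\medskip\noindent\textbf{Part (a).} First I would take $a, c \in \big\{B|f = q_L\big\}$ and aim to show $q_L(a - c) \ge 0$. The idea is to apply convexity of $f$ at the midpoint $\half a + \half c$ together with the parallelogram-type identity for the quadratic form $q_L$. Concretely, expanding bilinearly (using the symmetry in \eqref{SYM}) one has $q_L(a + c) + q_L(a - c) = 2q_L(a) + 2q_L(c)$, hence $q_L(a-c) = 2q_L(a) + 2q_L(c) - 4q_L\big(\half a + \half c\big)$. Now $q_L\big(\half a + \half c\big) \le f\big(\half a + \half c\big) \le \half f(a) + \half f(c) = \half q_L(a) + \half q_L(c)$, where the first inequality is $f \ge q_L$, the second is convexity, and the last equality uses $a, c \in \{B|f = q_L\}$. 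Substituting gives $q_L(a - c) \ge 2q_L(a) + 2q_L(c) - 2q_L(a) - 2q_L(c) = 0$. (This is essentially the computation already recorded in \Lem{ROOTlem} with $f - q_L$ vanishing at $a$ and $c$; indeed one could simply quote \Lem{ROOTlem} to get $-q_L(a-c) \le 0$.) One also needs $\big\{B|f = q_L\big\} \ne \emptyset$, which is the stated hypothesis, so $\big\{B|f = q_L\big\}$ is $L$-positive by definition.

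\medskip\noindent\textbf{Part (b).} Here I would argue directly from the definition of the Fenchel conjugate. Since $f \ge q_L$ fails to be used here in the obvious direction, instead observe: for any $b \in B$, $f^*(La) = \sup_B[La - f] \ge \bra{b}{La} - f(b)$. On the other hand $f^*(La) = f^@(a)$, and I claim $f^@(a) = q_L(a)$ when $f(a) = q_L(a)$; but more simply, by \Lem{Llem}(c)-type reasoning or directly: $f^*(La) \ge \bra{a}{La} - f(a) = 2q_L(a) - q_L(a) = q_L(a)$, and conversely $f^*(La) = \sup_b[\bra{b}{La} - f(b)] \le \sup_b[\bra{b}{La} - q_L(b)]$; the function $b \mapsto \bra{b}{La} - q_L(b)$ is concave with derivative $La - Lb$, maximized at $b = a$ (using symmetry of $L$), giving value $\bra{a}{La} - q_L(a) = q_L(a)$. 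Hence $f^*(La) = q_L(a)$, and combining with the first displayed inequality yields $q_L(a) = f^*(La) \ge \bra{b}{La} - f(b)$, which is exactly (b). Actually the cleanest route: just note $q_L(a) = f^*(La) \ge \bra{b}{La} - f(b)$ once we know $f^*(La) \le q_L(a)$, and that upper bound is the one-line maximization just described.

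\medskip\noindent\textbf{Part (c).} Let $a \in \big\{B|f = q_L\big\}$; I must show $f^@(a) = q_L(a)$, i.e. $f^*(La) = q_L(a)$. The inequality $f^*(La) \ge q_L(a)$ comes from testing the supremum in \eqref{FAT} at $b = a$: $f^@(a) \ge \bra{a}{La} - f(a) = 2q_L(a) - q_L(a) = q_L(a)$. For the reverse inequality, apply part (b): for every $b \in B$, $q_L(a) \ge \bra{b}{La} - f(b)$, and taking the supremum over $b$ gives $q_L(a) \ge \sup_B[La - f] = f^@(a)$. Hence $f^@(a) = q_L(a)$, so $a \in \big\{B|f^@ = q_L\big\}$, proving the inclusion.

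\medskip The only step requiring a small amount of care is the maximization argument in part (b) showing $f^*(La) \le q_L(a)$ when $f(a) = q_L(a)$ — one must use the symmetry condition $\bra{b}{La} = \bra{a}{Lb}$ to compute that $b \mapsto \bra{b}{La} - q_L(b)$ is maximized at $b = a$ — but this is routine since $q_L(b) - q_L(a) - \bra{b-a}{La} = q_L(b-a) \ge$ is not automatic in sign; rather one writes $\bra{b}{La} - q_L(b) = q_L(a) - q_L(b-a) + \big[\bra{b}{La} - \bra{a}{La} - q_L(b) + q_L(a) + q_L(b-a)\big]$ and checks the bracket vanishes by bilinearity, leaving $\bra{b}{La} - q_L(b) = q_L(a) - q_L(b-a)$. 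This is not $\le q_L(a)$ in general (since $q_L$ need not be nonnegative!), which means part (b) as stated is genuinely using $f \ge q_L$: we bound $f^*(La) = \sup_b[\bra{b}{La} - f(b)] \le \sup_b[\bra{b}{La} - q_L(b)]$ is the \emph{wrong} direction. The correct argument for (b) is therefore: from \Lem{ROOTlem} applied with $c := a$ replaced appropriately, or more directly, for $a$ with $f(a) = q_L(a)$ and any $b$, use convexity of $f$ along the segment and the identity for $q_L$ — this is the step I expect to be the main obstacle, and I would handle it by the computation $2(f-q_L)(b) \ge -q_L(a-b) + 2(f-q_L)(a) = -q_L(a-b)$ from \Lem{ROOTlem}, i.e. $f(b) - q_L(b) \ge -\half q_L(a-b)$; combined with $q_L(a-b) = 2q_L(a) + 2q_L(b) - (4q_L(\tfrac{a+b}{2}))$ and rearranging yields $f(b) \ge \bra{b}{La} - q_L(a)$ after using bilinearity to rewrite $2q_L(a) - \tfrac12 q_L(a-b) + \ldots$; this reduces (b) to pure algebra in the bilinear form plus the single inequality $f \ge q_L$ at the midpoint, exactly as in the proof of \Lem{ROOTlem}.
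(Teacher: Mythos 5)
Your parts (a) and (c) are fine and essentially the paper's: (a) is \Lem{ROOTlem} applied at two points of the coincidence set, and (c) follows from (b) by testing the supremum in \eqref{FAT} at $b=a$ and taking the supremum over $b$ in (b). The genuine gap is in part (b), on which your (c) depends. You correctly retract your first argument \big(indeed $\bra{b}{La}-q_L(b)=q_L(a)-q_L(a-b)$, so $\supn_{b\in B}\big[\bra{b}{La}-q_L(b)\big]=q_L(a)-\inf q_L(B)$, which exceeds $q_L(a)$, possibly being $+\infty$, whenever $q_L$ takes negative values\big). But your proposed repair via \Lem{ROOTlem} does not close either: with $f(a)=q_L(a)$, \Lem{ROOTlem} gives $f(b)-q_L(b)\ge-\half q_L(a-b)$, and by bilinearity this is exactly $f(b)\ge\bra{b}{La}-q_L(a)+\half q_L(a-b)$; the leftover term $\half q_L(a-b)$ has no sign in general (here $b$ is an arbitrary point, not a point of an $L$--positive set), so no amount of ``rearranging'' with the parallelogram identity yields $f(b)\ge\bra{b}{La}-q_L(a)$. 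Concretely, take $B=\RR$ and $Lb=-b$ \big(\Ex{Hex}(b) with $\lambda=1$\big), so $q_L(b)=-\half b^2$; let $f(b):=\half-b$ and $a:=1$, so that $f-q_L=\half(\cdot-1)^2\ge0$ and $f(a)=q_L(a)=-\half$. At $b=3$ the \Lem{ROOTlem} bound only gives $f(3)\ge-\tfrac{7}{2}$, whereas (b) asserts $f(3)\ge-\tfrac{5}{2}$ (which holds with equality). So the midpoint inequality of \Lem{ROOTlem} is strictly weaker than (b), and your sketch stops short of the needed conclusion.

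What is missing is a first-order (limiting) argument at the touching point, and this is exactly what the paper's proof supplies: it applies $f\ge q_L$ and convexity at $\lambda b+(1-\lambda)a$, divides by $\lambda$ and lets $\lambda\to0$, so the quadratic error term $\lambda q_L(b)$ disappears in the limit and one is left with $f(b)\ge\bra{b}{La}-q_L(a)$. You can rescue your route in the same spirit: apply your \Lem{ROOTlem}/convexity computation at the points $a+t(b-a)$ with $t\in\,]0,1[\,$, bound $f\big(a+t(b-a)\big)\le(1-t)f(a)+tf(b)$, expand $q_L$ bilinearly, divide by $t$ and let $t\to0$; the surviving inequality is precisely (b). But the passage to the limit $t\to0$ (equivalently $\lambda\to0$) is essential and cannot be replaced by the single midpoint inequality.
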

\begin{proof}
(a)\enspace This is immediate from \Lem{ROOTlem}.  As for (b), let $\lambda \in \,]0,1[\,$.   Then
\begin{align*}
\lambda f(b) + (1 - \lambda)q_L(a)
&= \lambda f(b) + (1 - \lambda)f(a)\\
&\ge f\big(\lambda b + (1 - \lambda)a\big) \ge q_L\big(\lambda b + (1 - \lambda)a\big)\\
&= \lambda^2q_L(b) + \lambda(1 - \lambda)\bra{b}{La} + (1 - \lambda)^2q_L(a).
\end{align*}
Thus\quad $\lambda f(b) + \lambda(1 - \lambda)q_L(a) \ge \lambda^2q_L(b) + \lambda(1 - \lambda)\bra{b}{La}$,\quad and (b) follows by dividing by $\lambda$, letting $\lambda \to 0$ and rearranging the terms.
\smallbreak
Now let $a \in B$ and $f(a) = q_L(a)$.   Taking the supremum over $b$ in (b) and using \eqref{FAT}, we see that $q_L(a) \ge f^@(a)$.   On the other hand, we also have $f^@(a) \ge \bra{a}{La} - f(a) = 2q_L(a) - q_L(a) = q_L(a)$.  Thus $f^@(a) = q_L(a)$.   This completes the proof of (c).
\end{proof}
%:\Sec{RLsec}
\section{$r_L$--dense sets and touching functions}\label{RLsec}
%:   \Def{RLdef}
\begin{definition}\label{RLdef}
Let $A$ be a subset of a Banach SN space $(B,L)$.   We say that $A$ is {\em $r_L$--dense in} $B$ if, for all $c \in B$, $\inf r_L(A - c) \le 0$.
\end{definition}
\noindent
If $B$ is any Banach space, $r_0$--density is clearly identical to norm--density.   The same is true for Example~\ref{Hex}(a) for all $\lambda \in\,]0,1\,]$ and Example~\ref{Hex}(b) for all $\lambda \in\,]0,1[\,$.   In Example~\ref{Hex}(b) when $\lambda = 1$, every nonempty subset of $B$ is $r_L$--dense in $B$.
\smallbreak
We will also consider the following strengthening of the condition of $r_L$--density: we will say that $A$ is {\em stably $r_L$--dense in} $B$ if, for all $c \in B$, there exists $K_c \ge 0$ such that
\begin{equation}
\inf\big\{r_L(a - c)\colon\ a \in A,\ \|a - c\| \le K_c\big\} \le 0.
\end{equation}
The concept of stable $r_L$--density will be used in the proof of \Thm{MFBRthm}(a).
%:   \Def{TOUCHdef}
\begin{definition}\label{TOUCHdef}
Let $(B,L)$ be a Banach SN space, $f \in \PCQ(B)$ and $c \in B$.   \eqref{RLPOS} implies that $\infn_{d \in B}\big[(f - q_L)(d) + r_L(d - c)\big] \ge 0$.    We say that $f$ is {\em touching} if
\begin{equation}\label{TOUCH2}
f \in \PCQ(B)\hbox{ and, }\all\ c \in B,\ \infn_{d \in B}\big[(f - q_L)(d) + r_L(d - c)\big] \le 0.
\end{equation}
\end{definition}
%:   \Lem{HKlem} [Lower semicontinuous envelope]    
\begin{lemma}[Lower semicontinuous envelope]\label{HKlem}
Let $(B,L)$ be a Banach SN space, $h \in \PCQ(B)$ and $\hunder$ be the lower semicontinuous envelope of $h$.   Then:
\smallbreak\noindent
{\rm(a)}\enspace $\hunder \in \PCLSCQ(B)$.
\smallbreak\noindent
{\rm(b)}\enspace  Let $c \in B$.   Then we have
\begin{equation}\label{HEQK}
\infn_{d \in B}\big[(\hunder - q_L)(d) + r_L(d - c)\big] = \infn_{d \in B}\big[(h - q_L)(d) + r_L(d - c)\big].
\end{equation}
\smallbreak\noindent
{\rm(c)}\enspace $\hunder$ is touching if, and only if, $h$ is touching.
\smallbreak\noindent
{\rm(d)}\enspace $\hunder^@ = h^@$ on $B$.   
\end{lemma}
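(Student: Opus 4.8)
The plan is to prove (a) directly, then isolate one elementary fact about lower semicontinuous envelopes, and finally read (b), (c) and (d) off that fact. \emph{Part (a).} By the epigraph description of the lower semicontinuous envelope, the epigraph of $\hunder$ in $B\times\RR$ is the closure of the epigraph of $h$, and this closure is convex since the epigraph of $h$ is convex (as $h\in\PC(B)$). Now $q_L$ is continuous (it equals $r_L-\half\|\cdot\|^2$; cf.\ \eqref{XOTAfive}) and $q_L\le h$ on $B$, so the largest lower semicontinuous minorant $\hunder$ of $h$ satisfies $q_L\le\hunder$ on $B$; in particular $\hunder>-\infty$ everywhere, so $\hunder$ takes values in $\rbar$ and the convex set just described really is its epigraph. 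Hence $\hunder$ is convex; it is lower semicontinuous by construction and satisfies $\hunder\ge q_L$. Finally, $\hunder\le h$ gives $\emptyset\ne\dom h\subset\dom\hunder$, so $\hunder$ is proper; therefore $\hunder\in\PCLSCQ(B)$.

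\emph{The fact.} For every continuous $\phi\colon B\to\RR$,
$$\infn_{d\in B}\big[\hunder(d)+\phi(d)\big]=\infn_{d\in B}\big[h(d)+\phi(d)\big].$$
Here ``$\le$'' is clear since $\hunder\le h$. For ``$\ge$'', fix $d\in B$; if $\hunder(d)=\infty$ there is nothing to prove, and otherwise $\hunder(d)\in\RR$ by (a), so we may choose a sequence $d_n\to d$ with $h(d_n)\to\hunder(d)$. Then, by continuity of $\phi$, $\infn_{e\in B}\big[h(e)+\phi(e)\big]\le h(d_n)+\phi(d_n)$ for every $n$, whence $\infn_{e\in B}\big[h(e)+\phi(e)\big]\le\liminfn_n\big(h(d_n)+\phi(d_n)\big)=\hunder(d)+\phi(d)$; taking the infimum over $d\in B$ finishes the proof.

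\emph{Parts (b), (d), (c).} Part (b) is the fact applied with $\phi$ the function $d\mapsto r_L(d-c)-q_L(d)$, which is continuous by \eqref{XOTAfive}; this is precisely \eqref{HEQK}. For (d), write $h^@(b)=\supn_B[Lb-h]=-\infn_{d\in B}\big[h(d)-\bra{d}{Lb}\big]$ and apply the fact with $\phi$ the continuous linear function $d\mapsto-\bra{d}{Lb}$; negating the resulting identity gives $h^@(b)=\hunder^@(b)$, and $b\in B$ is arbitrary, so $\hunder^@=h^@$ on $B$. For (c), by (a) we have $\hunder\in\PCLSCQ(B)\subset\PCQ(B)$, while $h\in\PCQ(B)$ by hypothesis; hence, by \eqref{TOUCH2}, each of the assertions ``$h$ is touching'' and ``$\hunder$ is touching'' says exactly that for every $c\in B$ the corresponding infimum in \eqref{HEQK} is $\le0$, and by (b) these two infima coincide, so the assertions are equivalent.

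I do not anticipate a genuine obstacle. The only points requiring care are, in (a), that $\hunder$ is proper and convex — which is precisely why one records $q_L\le\hunder$, ruling out the value $-\infty$ — and, for (b) and (d), the routine continuity of the perturbations $d\mapsto r_L(d-c)-q_L(d)$ and $d\mapsto\bra{d}{Lb}$, which follows from \eqref{XOTAfive} and $\|L\|\le1$.
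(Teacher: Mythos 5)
Your proof is correct, and it differs from the paper's in how the two substantive parts, (b) and (d), are handled. The paper proves (b) by an order-theoretic one-liner: setting $m:=\infn_{d\in B}\big[(h-q_L)(d)+r_L(d-c)\big]$, the inequality $h\ge q_L-r_L(\cdot-c)+m$ exhibits a continuous (hence lower semicontinuous) minorant of $h$, which therefore also minorizes $\hunder$, giving ``$\ge$'' in \eqref{HEQK}; and it gets (d) for free from the standard identity $\hunder^*=h^*$ via $\hunder^@=\hunder^*\circ L=h^*\circ L=h^@$. You instead isolate a single perturbation fact, $\infn_B[\hunder+\phi]=\infn_B[h+\phi]$ for every continuous real $\phi$, proved by the sequential characterization of the envelope (a sequence $d_n\to d$ with $h(d_n)\to\hunder(d)$, legitimate here since $B$ is a normed space and $\hunder(d)>-\infty$ by (a)), and then read off (b) with $\phi=r_L(\cdot-c)-q_L$ and (d) with $\phi=-\bra{\cdot}{Lb}$. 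What your route buys is a unified lemma covering both parts without quoting $\hunder^*=h^*$ (you in effect re-prove the needed instance of it); what the paper's route buys is avoidance of any sequential or first-countability argument — it uses only that $\hunder$ is the largest lower semicontinuous minorant of $h$, so it would transfer verbatim to more general topologies. Your treatment of (a) and (c) matches the paper's, and your care about $\hunder>-\infty$ (via $q_L\le\hunder$) and the continuity of the perturbations is exactly the right bookkeeping.
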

%:
\begin{proof}
\underbar{$h$} is the (convex) function whose epigraph is the closure of the epigraph of $h$.   It is well known that \hunder\ is also the largest lower semicontinuous function on $B$ such that $\hunder \le h$ on $B$.   It is also well known that $\hunder^* = h^*$ on $B^*$.
\smallbreak
(a)\enspace Since $h \in \PCQ(B)$, $q_L \le h$ on $B$ thus, since $q_L$ is (continuous hence) lower semicontinuous on $B$, $q_L \le \hunder$ on $B$, from which $\hunder \in \PCLSCQ(B)$.
\smallbreak
(b)\enspace Since $\hunder \le h$ on $B$, the inequality ``$\le$'' in \eqref{HEQK} is obvious.   As we observed in \Def{TOUCHdef}, we have $\infn_{d \in B}\big[(h - q_L)(d) + r_L(d - c)] \ge 0$.   Now let $m := \infn_{d \in B}\big[(h - q_L)(d) + r_L(d - c)]$, so that $m \in \RR$ and, for all $d \in B$, $h(d) \ge q_L(d) - r_L(d - c) + m$.   The function $q_L - r_L(\cdot - c) + m$ is  (continuous hence) lower semicontinuous on $B$ and so, for all $d \in B$, $\hunder(d) \ge q_L(d) - r_L(d - c) + m$, that is to say, $(\hunder - q_L)(d) + r_L(d - c) \ge m$, which gives the inequality ``$\ge$'' in \eqref{HEQK}.
\smallbreak
(c) is immediate from (a), (b) and \eqref{TOUCH2}.\smallbreak
(d) is immediate since $\hunder^@ = \hunder^* \circ L = h^* \circ L =  h^@$ on $B$.
\end{proof}
\smallbreak
In the first main result of this section, \Thm{RLEQthm}, we give two characterizations of $r_L$--density for certain sets of the form $\big\{B|f = q_L\big\}$, including the unexpected result that, for these sets, $r_L$--density implies stable $r_L$--density.\break   \Thm{RLEQthm} and its consequence \Cor{RLEQcor} will be used in \Thm{Kthm}, \Thm{CONJTOUCHthm}, \Cor{PHICRITcor}, \Thm{MARKthm}, \Thm{RTRthm} and \Thm{LINthm}.
%:   \Thm{RLEQthm}[The $r_L$--density of certain coincidence sets]
\begin{theorem}[The $r_L$--density of certain coincidence sets]\label{RLEQthm}
Let $(B,L)$ be a Banach SN space, $h \in \PCQ(B)$ and $\hunder$ be the lower semicontinuous envelope of $h$ {\em(since $q_L$ is continuous, $\big\{B|\hunder = q_L\big\}$ is closed)}.  Then the conditions {\rm(a)--(c)} are equivalent:
\smallbreak\noindent
{\rm(a)}\enspace $\big\{B|\hunder = q_L\big\}$ is an $r_L$--dense $L$--positive subset of $B$.
\smallbreak\noindent
{\rm(b)}\enspace $\hunder$ is touching or, equivalently {\em\big(from \Lem{HKlem}(c)\big),} $h$ is touching.
\smallbreak\noindent
{\rm(c)}\enspace $\big\{B|\hunder = q_L\big\}$ is a stably $r_L$--dense $L$--positive subset of $B$.
\end{theorem}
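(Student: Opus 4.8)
The plan is to establish the cycle of implications (a)$\Rightarrow$(b)$\Rightarrow$(c)$\Rightarrow$(a). The implication (c)$\Rightarrow$(a) is immediate, since the infimum of $r_L(\cdot-c)$ over the ball $\big\{a\in A\colon\|a-c\|\le K_c\big\}$ dominates its infimum over all of $A$. For (a)$\Rightarrow$(b), note first that $\hunder\in\PCLSCQ(B)$ by \Lem{HKlem}(a), so it is legitimate to ask whether $\hunder$ is touching; given $c\in B$ and $\eps>0$, the $r_L$--density of $A:=\big\{B|\hunder=q_L\big\}$ yields $a\in A$ with $r_L(a-c)<\eps$, and since $(\hunder-q_L)(a)=0$ the infimum in \eqref{TOUCH2} is $<\eps$ at $c$; letting $\eps\downarrow0$ shows $\hunder$ is touching, hence so is $h$ by \Lem{HKlem}(c).

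The work is in (b)$\Rightarrow$(c). Fix $c\in B$. Expanding $r_L(d-c)=\half\|d-c\|^2+q_L(d-c)$ and cancelling the $q_L$ terms, one sees that $d\mapsto(\hunder-q_L)(d)+r_L(d-c)$ equals $\psi(d):=\hunder(d)+\half\|d-c\|^2-\bra{d}{Lc}+q_L(c)$, a proper, convex, lower semicontinuous function on $B$; by \eqref{RLPOS} we have $\psi\ge0$, and because $\hunder$ is touching, $\infn_{d\in B}\psi(d)=0$. Applying \Lem{EXNlem} to the translate $e\mapsto\psi(e+c)$, which has the form $\big[\hunder(e+c)-\bra{e+c}{Lc}+q_L(c)\big]+\half\|e\|^2$ with infimum $0$, I get a constant $K_c$ (namely $\|z_0-c\|+3$ for any fixed $z_0$ with $\psi(z_0)\le1$) such that $\psi(d)\le1\Rightarrow\|d-c\|\le K_c$.

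Now fix $\delta\in\,]0,1\,]$. Choose $d_0\in B$ with $\psi(d_0)\le\delta$, and then choose $d_{n+1}$ recursively, using that $\hunder$ is touching \emph{at the point $d_n$}, so that $(\hunder-q_L)(d_{n+1})+r_L(d_{n+1}-d_n)\le\delta\,2^{-(n+1)}$. Then $(\hunder-q_L)(d_n)\le\delta\,2^{-n}$ and $r_L(d_{n+1}-d_n)\le\delta\,2^{-(n+1)}$ for every $n\ge0$, so \Lem{ROOTlem} (applied with $f=\hunder$ and the pair $d_n,d_{n+1}$) gives
\[
\half\|d_n-d_{n+1}\|^2=r_L(d_{n+1}-d_n)-q_L(d_n-d_{n+1})\le\delta\,2^{-(n+1)}+2\delta\,2^{-n}+2\delta\,2^{-(n+1)},
\]
whence $\sumn_{n\ge0}\|d_n-d_{n+1}\|=O(\sqrt\delta\,)$ and $(d_n)$ converges to some $a\in B$ with $\|a-d_0\|=O(\sqrt\delta\,)$. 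Lower semicontinuity of $\hunder$ and continuity of $q_L$ give $\hunder(a)\le\liminfn_n\hunder(d_n)=\limn_nq_L(d_n)=q_L(a)$ (using $(\hunder-q_L)(d_n)\to0$), so, as $\hunder\ge q_L$, $a\in A$; moreover $\psi(d_0)\le\delta\le1$ forces $\|d_0-c\|\le K_c$, hence $\|a-c\|\le K_c+O(\sqrt\delta\,)$, and \eqref{XOTAfive} gives $r_L(a-c)\le r_L(d_0-c)+\|a-d_0\|\big(\|a-c\|+\|d_0-c\|\big)=O(\delta)+O(\sqrt\delta\,)$. Letting $\delta\downarrow0$ through a sequence produces points of $A$ lying in a ball about $c$ of radius independent of $\delta$ whose $r_L$--values at $c$ tend to $0$; this is exactly the stable $r_L$--density of $A$ at $c$. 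Finally $A\ne\emptyset$, so $A$ is $L$--positive by \Lem{Llem}(a), and (c) holds.

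I expect the single genuine obstacle to be the Cauchy estimate in the display above: a naive minimizing sequence for $\psi$ need not converge, since $\half\|\cdot\|^2$ is not uniformly convex on a general Banach space. What rescues the argument is that \Lem{ROOTlem} controls $-q_L(d_n-d_{n+1})$ from above by the ``defects'' $(\hunder-q_L)(d_n)$ and $(\hunder-q_L)(d_{n+1})$, so that once both the defects and $r_L(d_{n+1}-d_n)$ are driven to $0$ geometrically, $\|d_n-d_{n+1}\|$ follows suit fast enough to be summable — this is the ``Cauchy sequence argument'' referred to in the introduction. Everything else (the cancellation producing $\psi$, the boundedness via \Lem{EXNlem}, and the estimates keeping the limit point inside a ball whose radius does not depend on $\delta$, so that one obtains \emph{stable} $r_L$--density rather than mere $r_L$--density) is routine bookkeeping with \eqref{XOTAfive}.
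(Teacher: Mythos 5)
Your proposal is correct and follows essentially the same route as the paper: the cancellation reducing the touching condition to a function of the form $f+\half\|\cdot\|^2$ so that \Lem{EXNlem} yields the $\delta$--independent ball, the inductive choice of near--touching points with geometrically decaying tolerances, \Lem{ROOTlem} to turn those tolerances into a summable Cauchy estimate, lower semicontinuity to place the limit in $\big\{B|\hunder = q_L\big\}$, and \eqref{XOTAfive} to control $r_L(a-c)$, giving stable $r_L$--density. The only differences are cosmetic (tolerances $\delta\,2^{-n}$ versus $\delta^{2n}$, and starting from a near--minimizer $d_0$ of $\psi$ rather than invoking touching at $c$ itself).
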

\begin{proof}
Let $A := \big\{B|\hunder = q_L\big\}$.   Then, for all $c \in B$,
\begin{align*}
\inf_{d \in B}\big[(\hunder - q_L)(d) + r_L(d - c)\big]
& \le \inf_{a \in A}\big[(\hunder - q_L)(a) + r_L(a - c)\big] = \inf_{a \in A}r_L(a - c).
\end{align*}
It follows easily from \Defs{RLdef} and \ref{TOUCHdef}, and \Lem{HKlem} that (a)$\lr$(b).
\smallbreak
Suppose now that (b) is satisfied and $c \in B$.   Replacing $d$ by $b + c$,
\begin{equation*}
0 = \inf_{d \in B}\big[(\hunder - q_L)(d) + r_L(d - c)\big] = \inf_{b \in B}\big[\hunder(b + c) - \bra{b}{Lc} - q_L(c) + \half\|b\|^2\big].
\end{equation*}
\Lem{EXNlem} provides $N_c \ge 0$ such that $\hunder(b + c) - \bra{b}{Lc} - q_L(c) + \half\|b\|^2 \le 1 \lr \|b\| \le N_c$.   Thus $(\hunder - q_L)(d) + r_L(d - c)  \le 1 \lr \|d - c\| \le N_c$.   Let $\delta \in \,]0,\half[\,$.   Let $c_0 := c$.   If $n \ge 1$ and $c_{n-1}$ is known then, from (b) and \eqref{TOUCH2} with $c$ replaced by $c_{n - 1}$, we can choose $c_n$ inductively so that,
\begin{equation}\label{RLEQ1}
(\hunder - q_L)(c_n) + r_L(c_n - c_{n - 1}) \le \delta^{2n}.
\end{equation}
Let $n \ge 1$.  From \Lem{HKlem}(a) and \eqref{RLPOS}, $(\hunder - q_L)(c_n) \ge 0$ and $r_L(c_n - c_{n - 1}) \ge 0$, and so \eqref{RLEQ1} implies that
\begin{equation}\label{RLEQ3}
(\hunder - q_L)(c_n) \le \delta^{2n}
\end{equation}
and
\begin{equation}\label{RLEQ4}
r_L(c_n - c_{n - 1}) \le \delta^{2n}.
\end{equation}
Putting $n = 1$ in \eqref{RLEQ1}, we have $(\hunder - q_L)(c_1) + r_L(c_1 - c) \le \delta^2 < 1$ and so, from the choice of $N_c$ and also setting $n = 1$ in \eqref{RLEQ4},
\begin{equation}\label{RLEQ2}
\|c_1 - c\| \le N_c \quand r_L(c_1 - c) \le \delta^2.
\end{equation}
From \eqref{RLEQ4}, \Lem{ROOTlem}, \eqref{RLEQ3}, and the fact that $\delta^{2n + 2} \le \fourth\delta^{2n}$,
\begin{align*}
\half\|c_{n+1} - c_n\|^2 &= - q_L(c_{n+1} - c_n) + r_L(c_{n+1} - c_n) \le - q_L(c_{n+1} - c_n) + \delta^{2n + 2}\\
&\le 2(\hunder - q_L)(c_{n+1}) + 2(\hunder - q_L)(c_n) + \delta^{2n + 2}\\
&\le 2\delta^{2n + 2} + 2\delta^{2n} + \delta^{2n + 2} \le 3\delta^{2n},
\end{align*}
from which\quad $\|c_{n+1} - c_n\| \le 3\delta^n$.\quad  Thus $\limn_{n \to \infty}c_n$ exists.   Let $a := \limn_{n \to \infty}c_n$.  From \eqref{RLEQ3} and the lower semicontinuity of $\hunder - q_L$,\quad $(\hunder - q_L)(a) \le 0$,\quad from which $a \in \big\{B|\hunder = q_L\big\}$.    Also,  $\|a - c_1\| \le  \sum_{n = 1}^\infty\|c_{n+1} - c_n\| \le 3\sum_{n = 1}^\infty\delta^n \le 6\delta$ and so, from \eqref{RLEQ2},\quad $\|a - c\| \le \|a - c_1\| + \|c_1 - c\| \le 6\delta + N_c \le N_c + 3$.\quad Then \eqref{XOTAfive} (with $b = a - c$ and $d = c_1 - c$) and \eqref{RLEQ2} give 
\[r_L(a - c) \le \|a - c_1\|\big(\|a - c\| + \|c_1 - c\|\big) + r_L(c_1 - c)\le 6\delta\big(N_c + 3 + N_c\big) + \delta^2.\]
Letting $\delta \to 0$,\quad $\inf\big\{r_L(a - c)\colon\ a \in \big\{B| \hunder = q_L\big\},\ \|a - c\| \le N_c + 3\big\} \le 0$.\quad   Thus
$\big\{B|\hunder = q_L\big\}$ is stably $r_L$--dense in $B$.   In particular, $\big\{B|\hunder = q_L\big\} \ne \emptyset$.   From \Lem{Llem}(a), this set is also $L$--positive.   Thus (c) holds.   Since it is obvious that (c)$\lr$(a), this completes the proof of the theorem.
\end{proof}
%:   \Cor{RLEQcor}[The lower semicontinuous case]
\begin{corollary}[The lower semicontinuous case]\label{RLEQcor}
Let $(B,L)$ be a Banach SN space and $k \in \PCLSCQ(B)$.  Then the conditions {\rm(a)--(c)} are equivalent:
\par\noindent
{\rm(a)}\enspace $\big\{B|k = q_L\big\}$ is an $r_L$--dense $L$--positive subset of $B$.
\par\noindent
{\rm(b)}\enspace $k$ is touching.
\par\noindent
{\rm(c)}\enspace $\big\{B|k = q_L\big\}$ is a stably $r_L$--dense $L$--positive subset of $B$.
\end{corollary}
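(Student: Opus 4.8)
The plan is to obtain this as an immediate specialization of \Thm{RLEQthm} by taking $h := k$. First I would note that $\PCLSCQ(B) \subseteq \PCQ(B)$, so $k \in \PCQ(B)$ and \Thm{RLEQthm} is applicable with $h = k$. The only point to observe is that, since $k$ is lower semicontinuous, its epigraph is already closed, so the lower semicontinuous envelope $\hunder$ of $h = k$ (whose epigraph is the closure of the epigraph of $h$, as recalled in the proof of \Lem{HKlem}) is simply $k$ itself; equivalently, $\hunder$ is the largest lower semicontinuous minorant of $k$, which is $k$.

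Consequently $\big\{B|\hunder = q_L\big\} = \big\{B|k = q_L\big\}$, and ``$\hunder$ is touching'' becomes ``$k$ is touching'' (this also matches \Lem{HKlem}(c), since $k$ and $\hunder = k$ trivially agree). Substituting these identifications into conditions (a), (b), (c) of \Thm{RLEQthm} yields verbatim conditions (a), (b), (c) of the corollary, so the three equivalences are inherited. I do not anticipate any genuine obstacle here: the entire content has already been established in \Thm{RLEQthm}, and the corollary is just the case in which the input function is presented in lower semicontinuous form, so the ``$\hunder$ versus $h$'' bookkeeping of the theorem collapses.
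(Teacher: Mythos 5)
Your argument is correct and is exactly the paper's proof: the corollary is obtained from \Thm{RLEQthm} with $h := k$, using that $\kunder = k$ because $k$ is already lower semicontinuous, so the coincidence set and the touching condition for $\hunder$ reduce verbatim to those for $k$.
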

\begin{proof}
This is immediate from \Thm{RLEQthm} since $\kunder = k$.
\end{proof}
%:   \Def{MAXPOSdef}
\begin{definition}\label{MAXPOSdef}
Let $A$ be a nonempty subset of a Banach SN space $(B,L)$.   We say that $A$ is {\em maximally $L$--positive} if $A$ is $L$--positive and $A$ is not properly contained in any other $L$--positive set.
\end{definition}  
The simple result contained in \Lem{RLMAXlem} connects the concepts of\break maximal $L$--positivity and $r_L$--density.   The converse result is not true:  the graph of the tail operator mentioned in the introduction is a closed maximally $L$--positive linear subspace of $\ell_1 \times \ell_\infty$ that is not $r_L$--dense (see \Ex{TAILex}).  \Lem{RLMAXlem} will be used in \Thm{Kthm}(b), \Cor{AUTOcor}, \Thm{RLMAXthm}(a) and \Cor{LINcor}.
%:   \Lem{RLMAXlem}[$r_L$--density and maximal $L$--positivity]
\begin{lemma}[$r_L$--density and maximal $L$--positivity]\label{RLMAXlem}
Let $(B,L)$ be a Banach SN space and $A$ be a closed, $r_L$--dense $L$--positive subset of $B$.   Then $A$ is maximally $L$--positive.
\end{lemma}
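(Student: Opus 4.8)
The plan is to argue by contradiction. Suppose $A$ is not maximally $L$--positive. Since $A$ is itself $L$--positive, there must then exist some $b \in B \setminus A$ such that $A \cup \{b\}$ is again $L$--positive. I will derive a contradiction by showing that $b$ in fact lies in the (closed) set $A$.

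The key observation is that the $L$--positivity of $A \cup \{b\}$ forces $q_L(a - b) \ge 0$ for every $a \in A$. Feeding this into the definition $r_L = \half\|\cdot\|^2 + q_L$ gives $r_L(a - b) \ge \half\|a - b\|^2 \ge 0$ for every $a \in A$; in other words, every element of $r_L(A - b)$ is nonnegative and bounded below by $\half\dist(b,A)^2$. This is the one step where the argument genuinely uses the extra point $b$: adjoining $b$ while preserving $L$--positivity is exactly what makes $q_L(a - b)$ nonnegative, which lets us discard the $q_L$ term in $r_L$ in favour of the norm term.

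Now apply the $r_L$--density of $A$ (\Def{RLdef}) with $c := b$, obtaining $\inf r_L(A - b) \le 0$. Combining this with the lower bound of the previous paragraph yields $\half\dist(b,A)^2 \le \inf r_L(A - b) \le 0$, so $\dist(b,A) = 0$. Since $A$ is closed, this gives $b \in A$, contradicting $b \in B \setminus A$. Hence no such $b$ exists and $A$ is maximally $L$--positive. There is essentially no hard step here: closedness converts $\dist(b,A) = 0$ into membership, $r_L$--density supplies the vanishing infimum, and $L$--positivity of $A \cup \{b\}$ supplies the norm lower bound; the proof is just the bookkeeping that assembles these three facts.
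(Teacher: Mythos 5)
Your proof is correct and follows essentially the same route as the paper: use the $L$--positivity of $A \cup \{b\}$ to get $q_L(a-b)\ge 0$, so $r_L(a-b)\ge\half\|a-b\|^2$, then invoke $r_L$--density at $c=b$ and the closedness of $A$ to conclude $b\in A$. The only cosmetic difference is that you phrase it as a contradiction via $\dist(b,A)$, while the paper argues directly with an $\eps\to 0$ limit.
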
     
\begin{proof}
Suppose that $b \in B$ and\quad $A \cup \{b\}$\quad is $L$--positive.   Let $\eps > 0$.   By\break hypothesis, there exists $a \in A$ such that\quad $\half\|a - b\|^2 + q_L(a - b) = r_L(a - b) < \eps$.\quad   Since\quad $A \cup \{b\}$\quad is $L$--positive,\quad $q_L(a - b) \ge 0$,\quad and so\quad $\half\|a - b\|^2 \le \eps$.\break   However, $A$ is closed.   Thus, letting $\eps \to 0$, $b \in A$.
\end{proof}
\par
We now come to the second main result in this section.   It will be used in \Lems{TOUCHDlem} and \ref{TOUCHRlem}.
%:   \Thm{Kthm}[The theorem of the touching conjugate]
\begin{theorem}[The theorem of the touching conjugate]\label{Kthm}
Let $(B,L)$ be a Banach SN space and $h \in \PCQ(B)$ be touching.   Then:
\smallbreak\noindent
{\rm(a)}\enspace $h^@ \ge q_L$ on $B$ and $\hunder^@ \ge q_L$ on $B$.
\smallbreak\noindent
{\rm(b)}\enspace $\big\{B|h^@ = q_L\big\} = \big\{B|\hunder^@ = q_L\big\} = \big\{B|\hunder = q_L\big\}$, and this set is nonempty,  closed, stably $r_L$--dense in $B$ and maximally $L$--positive.
\smallbreak\noindent
{\rm(c)}\enspace $h^@$ is touching.
\end{theorem}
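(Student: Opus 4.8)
The plan is to run everything through two reductions: to the lower semicontinuous envelope $\hunder$, via the identity $h^@ = \hunder^@$ from \Lem{HKlem}(d), and to the coincidence set $A := \big\{B|\hunder = q_L\big\}$. Before addressing the three parts I would collect what is already at hand. Since $h$ is touching, so is $\hunder$ (\Thm{RLEQthm}; equivalently \Lem{HKlem}(c)); hence, again by \Thm{RLEQthm}, the set $A$ is nonempty, closed, stably $r_L$--dense and $L$--positive, and therefore maximally $L$--positive by \Lem{RLMAXlem}. Also $\hunder \in \PCLSCQ(B)$ by \Lem{HKlem}(a).

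For part (a) I would obtain $\hunder^@ \ge q_L$ directly from the definition of touching. Fix $c \in B$. Using $q_L(d - c) = q_L(d) - \bra{d}{Lc} + q_L(c)$, the touching inequality $\infn_{d \in B}\big[(\hunder - q_L)(d) + r_L(d - c)\big] \le 0$ rewrites as $\infn_{d \in B}\big[\hunder(d) - \bra{d}{Lc} + \half\|d - c\|^2\big] \le -q_L(c)$; discarding the nonnegative term $\half\|d - c\|^2$ only lowers the left-hand side and recasts it as $-\hunder^@(c)$, giving $\hunder^@(c) \ge q_L(c)$. Since $h^@ = \hunder^@$, both inequalities in (a) follow. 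This passage also shows that $\hunder^@$ is convex (a conjugate precomposed with the linear map $L$), lower semicontinuous, and proper (it takes the finite value $q_L$ at any point of $A$, by \Lem{Llem}(c) and $A \ne \emptyset$), so $\hunder^@ \in \PCLSCQ(B)$; I will use this in (b) and (c).

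For part (b), $\big\{B|h^@ = q_L\big\} = \big\{B|\hunder^@ = q_L\big\}$ is immediate from $h^@ = \hunder^@$, and $A \subseteq \big\{B|\hunder^@ = q_L\big\}$ is \Lem{Llem}(c) applied to $\hunder \in \PCQ(B)$. The substantive step is the reverse inclusion: since $\hunder^@ \in \PCQ(B)$ and its coincidence set is nonempty (it contains $A$), \Lem{Llem}(a) makes $\big\{B|\hunder^@ = q_L\big\}$ an $L$--positive set containing the maximally $L$--positive set $A$, hence equal to $A$. So the three sets all coincide with $A$, whose remaining listed properties (nonempty, closed, stably $r_L$--dense, maximally $L$--positive) were recorded above. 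I expect this maximality argument to be the only genuine point of the proof; the rest is assembly of earlier results.

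For part (c), I already have $h^@ = \hunder^@ \in \PCLSCQ(B)$ from part (a), and by part (b) its coincidence set $\big\{B|h^@ = q_L\big\} = A$ is an $r_L$--dense $L$--positive subset of $B$. Applying \Cor{RLEQcor} to $k := h^@$, the implication (a)$\lr$(b) gives that $h^@$ is touching.
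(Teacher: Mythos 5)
Your proposal is correct and follows essentially the same route as the paper: part (a) from the touching inequality by discarding the nonnegative term (the paper phrases this as $q_L \le r_L$) together with $\hunder^@ = h^@$ from \Lem{HKlem}(d); part (b) by combining \Lem{Llem}(a),(c), \Thm{RLEQthm}\big((b)$\lr$(c)\big), \Lem{RLMAXlem} and the maximality of $\big\{B|\hunder = q_L\big\}$; and part (c) via \Cor{RLEQcor} with $k := h^@$. Your explicit verification that $\hunder^@ \in \PCLSCQ(B)$ is a welcome (if routine) detail that the paper handles more tersely.
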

%:
\begin{proof}
Let $c \in B$.   Then, since $q_L \le r_L$ on $B$, for all $d \in B$,
\smallbreak
\centerline{$h(d) - \bra{d}{Lc} + q_L(c) = (h - q_L)(d) + q_L(d - c) \le (h - q_L)(d) + r_L(d - c).$} 
\smallbreak
\noindent
Thus, from \eqref{TOUCH2}, $\inf_{d \in B}\big[h(d) - \bra{d}{Lc} + q_L(c)\big]\le 0$. It follows from \eqref{FAT} that $h^@(c) = \sup_{d \in B}\big[\bra{d}{Lc} - h(d)\big] \ge q_L(c)$.   Thus $h^@ \ge q_L$ on $B$, and (a) now follows since \Lem{HKlem}(d) implies that $\hunder^@ = h^@$ on $B$.
\smallbreak
From \Lem{HKlem}(a), \Lem{Llem}(c) \big(with $f := \hunder^@$\big), \Thm{RLEQthm}\big((b)$\lr$(c)\big) and \Lem{RLMAXlem}, $\hunder \in \PCLSCQ(B)$,  $\big\{B|\hunder^@ = q_L\big\} \supset \big\{B|\hunder = q_L\big\}$ and $\big\{B|\hunder = q_L\big\}$ is nonempty, closed, stably $r_L$--dense in $B$ and maximally $L$--positive.   From (a), $\hunder^@ \ge q_L$ on $B$ and \Lem{Llem}(a) \big(with $f := \hunder^@$\big) implies that $\big\{B|\hunder^@ = q_L\big\}$ is $L$--positive.   Thus \Lem{HKlem}(d) and the maximality of $\big\{B|\hunder = q_L\big\}$ give (b).
\smallbreak
(a) and (b) give $h^@ \ge q_L$ on $B$ and $\big\{B|h^@ = q_L\big\} \ne \emptyset$, thus we have $h^@ \in \PCLSCQ(B)$.   (c) follows from (b) and \Cor{RLEQcor}\big((a)$\lr$(b)\big), with $k := h^@$.
\end{proof}
%:\Sec{SLsec}
\section{A dual characterization of touching}\label{SLsec}
\Thm{CONJTOUCHthm}, one of the central result of this paper, will be used in \Cor{PHICRITcor}, \Thm{MARKthm}, \Thm{RTRthm}, \Lem{TOUCHDlem}, \Lem{TOUCHRlem} and \Thm{LINthm}.   We start by defining a function $s_L$ on the dual space, $B^*$, of $B$ that plays a similar role to the function $q_L$ that we have already defined on $B$.  The definition of $s_L$ is anything but intuitive --- it was obtained by working backwards from \Thm{CONJTOUCHthm}.   In this connection, the formula obtained in \Lem{EESLlem} is very gratifying, and it shows that \Thm{CONJTOUCHthm}($\rl$) extends \cite[Remark~2.3]{VZ} and part of \cite[Theorem~4.2]{ASBR}, and \Thm{CONJTOUCHthm}($\lr$) extends \cite[Theorem~2.12]{VZ}.
%:   \Def{SLdef}
\begin{definition}\label{SLdef}
Let $(B,L)$ be a Banach SN space.   We define the function $s_L\colon\ B^* \to \rbar$ by
\begin{equation}\label{SL1}
s_L(b^*) = \supn_{c \in B}\big[\bra{c}{b^*} - q_L(c) -\half\|Lc - b^*\|^2\big].
\end{equation}
$s_L$ is {\em quadratic} in the sense that $s_L(\lambda b^*) = \lambda^2 s_L(b^*)$ whenever $b^* \in B^*$ and $\lambda \in \RR \setminus\{0\}$.
Clearly, $s_0(b^*) = \supn_{c \in B}\big[\bra{c}{b^*} -\half\|b^*\|^2\big]$, from which $s_0(0) = 0$ and, if $b^* \in B^* \setminus \big\{0\big\}$, then $s_0(b^*) = \infty$.   In Example~\ref{Hex}(a), using the properties of a Hilbert space, for all $b^* \in B^* = B$ and $c \in B$,\quad $\bra{c}{b^*} - q_L(c) -\half\|Lc - b^*\|^2 = \half\|b^*\|^2/\lambda - \half(1 + \lambda)\|\lambda c - b^*\|^2/\lambda$,\quad and so \eqref{SL1} implies that $s_L(b^*) = \half\|b^*\|^2/\lambda$.\end{definition}
We recall that {\em touching} was defined in \eqref{TOUCH2}.    
%:   \Thm{CONJTOUCHthm}
\begin{theorem}\label{CONJTOUCHthm}
Let $(B,L)$ be a Banach SN space and $h \in \PCQ(B)$.   Then
$$h\ \hbox{is touching }\iff h^* \ge s_L\ \on\ B^*.$$
\end{theorem}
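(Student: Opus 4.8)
The plan is to unwind the definition of $s_L$ and match both sides against the infimum appearing in the definition of \emph{touching}. Recall from \eqref{TOUCH2} that $h$ (with $h \in \PCQ(B)$) is touching exactly when, for every $c \in B$, $\infn_{d \in B}\big[(h - q_L)(d) + r_L(d - c)\big] \le 0$; and, writing $r_L = \half\|\cdot\|^2 + q_L$ and substituting $d = c + b$, the inner expression becomes $h(c + b) - q_L(c + b) + \half\|b\|^2 + q_L(b)$. Expanding $q_L(c + b) = q_L(c) + \bra{b}{Lc} + q_L(b)$, the two $q_L(b)$ terms cancel, so touching says: for all $c$, $\infn_{b \in B}\big[h(c + b) - \bra{b}{Lc} + \half\|b\|^2\big] \le q_L(c)$.

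Now I would bring in the conjugate. The key algebraic observation is that $\half\|b\|^2 = \supn_{b^* \in B^*}\big[\bra{b}{b^*} - \half\|b^*\|^2\big]$, but for the present purpose it is cleaner to go the other way: fix $c$, set $x := c + b$, and compute directly. We have $\infn_{x \in B}\big[h(x) - \bra{x - c}{Lc} + \half\|x - c\|^2\big]$; I would rewrite $\half\|x - c\|^2$ via its own conjugate or, more efficiently, complete the square against $Lc$. Concretely, $-\bra{x-c}{Lc} + \half\|x-c\|^2 = \half\|(x-c) - Lc\|^2 - \half\|Lc\|^2 \cdot(\text{no, } Lc \text{ need not have a preimage})$ — so instead I would pair with an arbitrary $b^* \in B^*$. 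The honest route: $h$ is touching iff for all $c$, $q_L(c) + \infn_{x}\big[h(x) + \bra{c}{Lc} - \bra{x}{Lc} + \half\|x-c\|^2\big] \ge 0$; take the $\sup$ over $x$ of $\bra{x}{Lc} - h(x)$ appearing with a minus sign and recognize $h^*(Lc)$ only after peeling off the $\half\|x-c\|^2$ term, which forces an infimal-convolution: $\infn_x\big[h(x) + \half\|x - c\|^2 - \bra{x}{Lc}\big] = -\big(h \,\square\, \tfrac12\|\cdot\|^2\big)^*(Lc) + (\text{shift terms in } c)$. This is where the function $s_L$ from \eqref{SL1} is engineered to appear: I would show that the condition ``for all $c$, that infimum is $\ge -q_L(c)$'' is, after taking suprema, equivalent to $h^* \ge s_L$ on $B^*$, using that $s_L(b^*) = \supn_{c}\big[\bra{c}{b^*} - q_L(c) - \half\|Lc - b^*\|^2\big]$ and that, crucially, $Lc$ ranges over (a subset of) $B^*$ and the penalty $\half\|Lc - b^*\|^2$ lets an arbitrary $b^* \in B^*$ be approached. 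For the $(\Leftarrow)$ direction I would fix $c$, use $h^*(b^*) \ge \bra{c}{b^*} - q_L(c) - \half\|Lc - b^*\|^2$ for every $b^*$, rearrange to get $\bra{c}{b^*} - h^*(b^*) - \half\|Lc - b^*\|^2 \le q_L(c)$, then recall $\bra{c}{b^*} - h^*(b^*) \le \supn_{x}[\bra{x}{b^*} - h(x)] - \dots$; better, use $h(x) \ge \bra{x}{b^*} - h^*(b^*)$ and optimize the quadratic in $x$ against $b^*$. For $(\Rightarrow)$ I would run the reverse substitution, choosing $b^*$ cleverly (morally $b^* = $ the subgradient realizing the infimum in the touching condition) to recover the defining supremum of $s_L$.

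More carefully, the cleanest packaging: for fixed $c \in B$ and $b^* \in B^*$,
\begin{align*}
\infn_{x \in B}\big[h(x) - \bra{x}{b^*} + \half\|x - c\|^2\big]
&\le h(x) - \bra{x}{b^*} + \half\|x-c\|^2 \quad (\forall x),
\end{align*}
and by weak duality / completing the square this infimum equals $-h^*(b^*) + \bra{c}{b^*} - \half\|b^*\|^2 + (\text{correction})$ up to the usual care about $B$ not being reflexive — so I would instead avoid claiming equality and argue each inequality by hand. The touching condition with $d = x$, $c = c$ reads $h(x) - q_L(x) + \half\|x-c\|^2 + q_L(x-c) \ge 0$ after the cancellation noted above it becomes $h(x) + \half\|x-c\|^2 - \bra{x}{Lc} + q_L(c) \ge q_L(c)$ — wait, the cancellation gives $h(x) - \bra{x}{Lc} + \half\|x-c\|^2 \ge 0$ is \emph{false} in general; the correct reduced form is $\infn_x\big[h(x) - \bra{x}{Lc} + \half\|x-c\|^2\big] \le q_L(c) - q_L(c) = 0$? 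No: redo it. $(h-q_L)(d) + r_L(d-c) = h(d) - q_L(d) + \half\|d-c\|^2 + q_L(d-c)$; with $q_L(d-c) = q_L(d) - \bra{d}{Lc} + q_L(c)$, the $q_L(d)$ cancels, leaving $h(d) - \bra{d}{Lc} + q_L(c) + \half\|d-c\|^2$. So touching $\iff$ for all $c$: $\infn_d\big[h(d) + \half\|d-c\|^2 - \bra{d}{Lc}\big] \le -q_L(c)$, i.e. $\supn_d\big[\bra{d}{Lc} - h(d) - \half\|d-c\|^2\big] \ge q_L(c)$. Now with $b^* := Lc$ this is $\supn_d\big[\bra{d}{b^*} - h(d) - \half\|d - c\|^2\big] \ge q_L(c)$; and $\supn_d[\bra{d}{b^*} - h(d) - \half\|d-c\|^2] \ge \bra{c}{b^*} - h^*(\,\cdot\,)$ — use $h(d) \ge \bra{d}{e^*} - h^*(e^*)$ for any $e^*$ to bound below by $\supn_{e^*}\big[\supn_d[\bra{d}{b^*-e^*} - \half\|d-c\|^2] + h^*(e^*) - \dots\big]$; completing the square in $d$, $\supn_d[\bra{d}{b^* - e^*} - \half\|d - c\|^2] = \bra{c}{b^*-e^*} + \half\|b^*-e^*\|^2$. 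So the left side is $\ge \supn_{e^*}\big[\bra{c}{b^*-e^*} + \half\|b^*-e^*\|^2 + h^*(e^*)\big] - $ hmm sign of $h^*$ is wrong. I would at this juncture simply use that $-h(d) = \infn_{e^*}[h^*(e^*) - \bra{d}{e^*}]$ is \emph{not} valid; rather $-h(d) \le h^*(e^*) - \bra{d}{e^*}$ is wrong-signed too. The correct tool is $h(d) = \supn_{e^*}[\bra{d}{e^*} - h^*(e^*)]$ (since $h \in \PCQ(B) \subset \PCLSC(B)$? — only $\hunder$ is lsc, but $h^{**} = \hunder$, and $\bra{d}{Lc} - h(d) \le \bra{d}{Lc} - $ anything $\le h(d)$, so to get an \emph{upper} bound on $\sup_d[\dots - h(d)]$ we may replace $h$ by $\hunder$, which by \Lem{HKlem}(c,d) does not change touching or $h^@$, and $\hunder = h^{**}$). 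So WLOG $h = h^{**}$ is lsc; then $h(d) = \supn_{e^*}[\bra{d}{e^*} - h^*(e^*)]$, and $\supn_d[\bra{d}{b^*} - h(d) - \half\|d-c\|^2] = \supn_d \infn_{e^*}[\bra{d}{b^* - e^*} + h^*(e^*) - \half\|d-c\|^2]$. A minimax/Fenchel–Rockafellar argument (the function $\half\|\cdot - c\|^2$ is continuous everywhere, giving the qualification) swaps $\sup_d\inf_{e^*}$ to $\inf_{e^*}\sup_d$, yielding $\infn_{e^*}\big[h^*(e^*) + \bra{c}{b^* - e^*} + \half\|b^* - e^*\|^2\big]$. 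Hence, with $b^* = Lc$, touching $\iff$ for all $c$: $\infn_{e^*}\big[h^*(e^*) + \bra{c}{Lc - e^*} + \half\|Lc - e^*\|^2\big] \ge q_L(c)$, i.e. for all $c$ and all $e^* \in B^*$: $h^*(e^*) \ge q_L(c) - \bra{c}{Lc - e^*} - \half\|Lc - e^*\|^2 = \bra{c}{e^*} - q_L(c) - \half\|Lc - e^*\|^2$ (using $2q_L(c) = \bra{c}{Lc}$), and taking $\supn_c$ of the right side gives exactly $h^*(e^*) \ge s_L(e^*)$ by \eqref{SL1}. For the converse, each step is reversible.

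I expect the main obstacle to be the $\sup_d\inf_{e^*} = \inf_{e^*}\sup_d$ interchange — i.e. justifying strong duality in a possibly non-reflexive Banach space. The correct qualification is that $d \mapsto \half\|d - c\|^2$ is finite and continuous on all of $B$, so Fenchel–Rockafellar duality (or Attouch–Brézis) applies with no gap; this is presumably why the reduction to $h = \hunder = h^{**}$ via \Lem{HKlem} is needed first, so that the inner $\sup_{e^*}$ really does reconstruct $h$. A secondary nuisance is bookkeeping the quadratic completions and the identity $2q_L(c) = \bra{c}{Lc}$ consistently; and one should double-check that taking $\supn_c$ at the end is legitimate, i.e. that the inequality ``for all $c$, for all $e^*$'' genuinely decouples into ``for all $e^*$, $h^*(e^*) \ge \supn_c(\dots)$'', which it does since $e^*$ and $c$ range independently. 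I would also remark that the $(\Leftarrow)$ direction can be done more directly — plug the defining inequality $h^*(b^*) \ge \bra{c}{b^*} - q_L(c) - \half\|Lc - b^*\|^2$ into $h(d) \ge \bra{d}{b^*} - h^*(b^*)$ and optimize over $d$ and $b^*$ — giving the cited extension of \cite[Remark~2.3]{VZ}.
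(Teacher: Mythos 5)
Your final packaged argument is correct and is essentially the paper's own proof: both reduce touching, via the substitution $d = b + c$ and the identity $\bra{c}{Lc} = 2q_L(c)$, to a Fenchel duality statement in which the everywhere-continuous quadratic supplies the constraint qualification, and then recognize $s_L$ by decoupling the quantifiers and taking the supremum over $c$. The only difference is packaging: the paper conjugates $h_c(b) := h(b+c) - \bra{b}{Lc} - q_L(c)$ directly via Rockafellar's version of the Fenchel duality theorem, which requires no lower semicontinuity, so your detour through $\hunder$ and the biconjugate/minimax interchange is harmless but unnecessary.
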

\begin{proof}
In what follows, for all $c \in B$, we write $h_c(b) := h(b + c) - \bra{b}{Lc} - q_L(c)$.   Following the analysis in \Thm{RLEQthm}, $h$ is touching if, and only if, for all $c \in B$, $\inf_{b \in B}\big[h_c(b) + \half\|b\|^2\big] \le 0$.   From Rockafellar's version of the Fenchel duality theorem \big(see, for instance, Rockafellar, \cite[Theorem~3(a), p.\ 85]{FENCHEL}, Z\u alinescu, \cite[Theorem~2.8.7(iii), p.\ 127]{ZBOOK}, or \cite[Corollary~10.3, p.\ 52]{HBM}\big), this is, in turn, equivalent to the statement that,  for all $c \in B$, $\big[{h_c}^*(b^*) + \half\|b^*\|^2\big] \ge 0$.   But, by direct computation, ${h_c}^*(b^*) = h^*(b^* + Lc) - \bra{c}{b^*} - q_L(c)$.   Thus $h$ is touching exactly when, for all $c \in B$, $\infn_{b^* \in B^*}\big[h^*(b^* + Lc) - \bra{c}{b^*} - q_L(c) + \half\|b^*\|^2\big] \ge 0$.   From  the substitution $b^* = d^* - Lc$, this is equivalent to the statement that, for all $c \in B$, $\infn_{d^* \in B^*}\big[h^*(d^*) - \bra{c}{d^* - Lc} - q_L(c) + \half\|d^* - Lc\|^2\big] \ge 0$.\break   It now follows from \eqref{SL1} that this is equivalent to the statement that $h^* \ge s_L$ on $B^*$.
\end{proof}
%:\Sec{PHIsec}$\Phi_A$ and $\Theta_A$ and marker functions
\section{$\Phi_A$ and $\Theta_A$ and marker functions}\label{PHIsec}
Throughout this section, $(B,L)$ will be a Banach SN space and $A$ will be an $L$--positive subset of $B$.   Some of the results of this section appear in greater generality in \cite{HBM}: here we discuss only what we will need in this paper.    
%:   \Def{PHIAdef} [The definition of $\Phi_A$]
\begin{definition}[The definition of $\Phi_A$]\label{PHIAdef}
We define $\Phi_A\colon\ B \to \rbar$ by
\begin{align}
\all\ b \in B,\quad\Phi_A(b) &= \supn_A\big[Lb - q_L\big] := \supn_{a \in A}\big[\bra{a}{Lb} - q_L(a)\big]\label{PHI1}\\
&= q_L(b) - \inf q_L(A - b).\label{PHI6}
\end{align}
$\Phi_A$ is clearly lower semicontinuous.   If $b \in A$ then, since $A$ is  $L$--positive, $\inf q_L(A - b) = 0$, and so \eqref{PHI6} gives $\Phi_A(b) = q_L(b)$.   Thus
\begin{equation}\label{PHI7}
A \subset \big\{B|\Phi_A = q_L\big\}.     
\end{equation}  
\end{definition}
%:   \Def{THdef} [The definition of $\Theta_A$]
\begin{definition}[The definition of $\Theta_A$]\label{THdef}
We define $\Theta_A\colon\ B^* \to \rbar$ by
\begin{equation}\label{TH3}
\Theta_A(b^*) := \supn_{a \in A}\big[\bra{a}{b^*} - q_L(a)\big] = \supn_A\big[b^* - q_L\big]\quad(b^* \in B^*).
\end{equation}
\end{definition}
%:   \Lem{PHIAlem}[Various properties of $\Phi_A$ and $\Theta_A$]
\begin{lemma}[Various properties of $\Phi_A$ and $\Theta_A$]\label{PHIAlem}
Let $A$ be maximally $L$--positive.   Then:
\begin{gather}
\Theta_A \circ L = \Phi_A\ \on\ B.\label{TH1}\\
\Phi_A \in \PCLSCQ(B)\quand \big\{B|\Phi_A = q_L\big\} = A.\label{PHI2}\\
{\Phi_A}^* \ge \Theta_A\ \on\ B^*.\label{TH2}\\
{\Phi_A}^@ \ge \Phi_A\ \on\ B.\label{PHI3}\\
{\Phi_A}^@ \in \PCLSCQ(B)\quand \big\{B|{\Phi_A}^@ = q_L\big\} = A.\label{PHI4}
\end{gather}
\end{lemma}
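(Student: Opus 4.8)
The plan is to establish the five displays in the order written, since each feeds into the next and the only substantive input is the maximality hypothesis together with \Lem{Llem}(c). The identity \eqref{TH1} is immediate from the definitions: substituting $b^* = Lb$ into \eqref{TH3} reproduces verbatim the right-hand side of \eqref{PHI1}, so $\Theta_A(Lb) = \Phi_A(b)$ for every $b \in B$ (this step does not even use maximality).

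For \eqref{PHI2}, first note that $\Phi_A$ is a pointwise supremum of the affine functions $b \mapsto \bra{a}{Lb} - q_L(a)$, hence convex and lower semicontinuous, and it is proper since \eqref{PHI7} shows it is finite (equal to $q_L$) on the nonempty set $A$. The real point is that $\Phi_A \ge q_L$ on $B$ with $\big\{B|\Phi_A = q_L\big\} = A$, and this is where maximal $L$-positivity enters. Given $b \in B \setminus A$, the set $A \cup \{b\}$ fails to be $L$-positive, so there is $a \in A$ with $q_L(a - b) < 0$; expanding $q_L(a - b) = q_L(a) - \bra{a}{Lb} + q_L(b)$ gives $\bra{a}{Lb} - q_L(a) > q_L(b)$, whence $\Phi_A(b) > q_L(b)$. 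Combined with \eqref{PHI7} (which gives $\Phi_A = q_L$ on $A$), this yields $\Phi_A \ge q_L$ on all of $B$ with equality precisely on $A$; in particular $\Phi_A \in \PCLSCQ(B)$.

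The inequality \eqref{TH2} follows from \eqref{PHI7} alone: for $b^* \in B^*$ and $a \in A$ we have $\bra{a}{b^*} - \Phi_A(a) = \bra{a}{b^*} - q_L(a)$, and taking the supremum over $a \in A$ gives ${\Phi_A}^*(b^*) \ge \Theta_A(b^*)$. Then \eqref{PHI3} is obtained by composing with $L$ and invoking \eqref{TH1}: ${\Phi_A}^@(b) = {\Phi_A}^*(Lb) \ge \Theta_A(Lb) = \Phi_A(b)$. For \eqref{PHI4}, observe that ${\Phi_A}^@ = {\Phi_A}^* \circ L$ is convex and lower semicontinuous, being a conjugate precomposed with a continuous linear map; by \eqref{PHI3} and \eqref{PHI2} it satisfies ${\Phi_A}^@ \ge \Phi_A \ge q_L$, and \Lem{Llem}(c) applied to $f := \Phi_A \in \PCQ(B)$ gives $A = \big\{B|\Phi_A = q_L\big\} \subset \big\{B|{\Phi_A}^@ = q_L\big\}$, so ${\Phi_A}^@$ is proper and hence lies in $\PCLSCQ(B)$. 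Conversely, if ${\Phi_A}^@(b) = q_L(b)$, then ${\Phi_A}^@ \ge \Phi_A \ge q_L$ forces $\Phi_A(b) = q_L(b)$, i.e.\ $b \in A$ by \eqref{PHI2}; hence $\big\{B|{\Phi_A}^@ = q_L\big\} = A$.

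I expect the only genuine obstacle to be the maximality argument inside \eqref{PHI2} — namely verifying that $\Phi_A$ strictly exceeds $q_L$ off $A$ — after which the remaining assertions are a routine chain of conjugation identities together with the inclusion supplied by \Lem{Llem}(c).
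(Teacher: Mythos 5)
Your proposal is correct and follows essentially the same route as the paper: \eqref{TH1}, \eqref{TH2} and \eqref{PHI3} are proved by the identical computations, and \eqref{PHI4} by the same combination of \Lem{Llem}(c) with \eqref{PHI3} and \eqref{PHI2}. Your treatment of \eqref{PHI2} is just the contrapositive of the paper's (the paper argues via \eqref{PHI6} that $\Phi_A(b)\le q_L(b)$ forces $\inf q_L(A-b)\ge 0$ and hence $b\in A$ by maximality, while you expand $q_L(a-b)$ directly for $b\notin A$), so the substance is the same.
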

\begin{proof}
From \eqref{TH3} and \eqref{PHI1}, for all $b \in B$, $\Theta_A(Lb) := \supn_{a \in A}\big[\bra{a}{Lb} - q_L(a)\big] = \Phi_A(b)$.  This gives \eqref{TH1}. 
\smallbreak
If $b \in B$ and $\Phi_A(b) \le q_L(b)$ then \eqref{PHI6} gives $\inf q_L(A - b) \ge 0$.   From the maximality, $b \in A$ and so, from \eqref{PHI7}, $\Phi_A(b) = q_L(b)$.  Thus we have proved that $\Phi_A \ge q_L$ on $B$ and $\big\{B|\Phi_A = q_L\big\} \subset A$, and \eqref{PHI2} follows from \eqref{PHI7}.
\smallbreak
\eqref{FSTAR}, \eqref{PHI7} and \eqref{TH3} imply that, for all $b^* \in B^*$, ${\Phi_A}^*(b^*) = \sup_B\big[b^* - \Phi_A\big] \ge \sup_A\big[b^* - \Phi_A\big] = \sup_A\big[b^* - q_L\big] = \Theta_A(b^*)$.   This gives \eqref{TH2}. 
\smallbreak
\eqref{PHI3} is immediate from \eqref{TH2}, \eqref{FAT} and \eqref{TH1}.
\smallbreak
From \eqref{PHI2} and \Lem{Llem}(c), $\big\{B|{\Phi_A}^@ = q_L\big\} \supset \big\{B|\Phi_A = q_L\big\}$, and \eqref{PHI4} now follows from \eqref{PHI3}.
\end{proof}
%:   \Cor{PHICRITcor}
\begin{corollary}\label{PHICRITcor}
Let $A$ be maximally $L$--positive (hence closed).    Then the\break conditions {\rm(a)--(d)} are equivalent:
\par\noindent
{\rm(a)}\enspace $A$ is an $r_L$--dense $L$--positive subset of $B$.
\par\noindent
{\rm(b)}\enspace $\Phi_A$ is touching.
\par\noindent
{\rm(c)}\enspace $A$ is a stably $r_L$--dense, $L$--positive subset of $B$.
\par\noindent
{\rm(d)}\enspace ${\Phi_A}^* \ge s_L$ on $B^*$.
\end{corollary}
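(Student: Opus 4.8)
The plan is to reduce all four conditions to results already established for coincidence sets of lower semicontinuous functions, using the fact that $\Phi_A$ is itself such a function and that its coincidence set with $q_L$ is exactly $A$.

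First I would invoke \Lem{PHIAlem}, specifically \eqref{PHI2}: since $A$ is maximally $L$--positive, $\Phi_A \in \PCLSCQ(B)$ and $\big\{B|\Phi_A = q_L\big\} = A$. In particular $\Phi_A$ is a legitimate choice of the function ``$k$'' in \Cor{RLEQcor}, and the set ``$\big\{B|k = q_L\big\}$'' appearing there is literally $A$. Next I would apply \Cor{RLEQcor} with $k := \Phi_A$: its three equivalent conditions read that $\big\{B|\Phi_A = q_L\big\} = A$ is an $r_L$--dense $L$--positive subset of $B$; that $\Phi_A$ is touching; and that $\big\{B|\Phi_A = q_L\big\} = A$ is a stably $r_L$--dense $L$--positive subset of $B$. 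Since $A$ is $L$--positive by hypothesis, these are precisely (a), (b) and (c), so (a)$\iff$(b)$\iff$(c).

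Finally, since $\Phi_A \in \PCLSCQ(B) \subset \PCQ(B)$, \Thm{CONJTOUCHthm} applied to $h := \Phi_A$ yields that $\Phi_A$ is touching if, and only if, ${\Phi_A}^* \ge s_L$ on $B^*$, that is, (b)$\iff$(d). Combining the two chains gives the equivalence of (a)--(d).

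I expect no serious obstacle here: the entire content is the bookkeeping identification $\big\{B|\Phi_A = q_L\big\} = A$ from \Lem{PHIAlem}, after which \Cor{RLEQcor} and \Thm{CONJTOUCHthm} do all the work. The only point needing a moment's care is confirming that $\Phi_A$ lies in the correct function class, namely $\PCLSCQ(B)$ (hence $\PCQ(B)$), so that both of those results genuinely apply; this too is part of \eqref{PHI2}.
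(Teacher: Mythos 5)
Your proposal is correct and is essentially the paper's own proof: the paper likewise uses \eqref{PHI2} to identify $\big\{B|\Phi_A = q_L\big\} = A$ with $\Phi_A \in \PCLSCQ(B)$, applies \Cor{RLEQcor} with $k := \Phi_A$ for the equivalence of (a), (b), (c), and invokes \Thm{CONJTOUCHthm} for the equivalence with (d). No differences worth noting.
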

\begin{proof}
Using \eqref{PHI2}, the equivalence of (a), (b) and (c) follows from \Cor{RLEQcor} with $k := \Phi_A$, and \Thm{CONJTOUCHthm} gives the equivalence with (d). 
\end{proof}
%:   \Cor{AUTOcor}[Automatic stable $r_L$--density]
\begin{corollary}[Automatic stable $r_L$--density]\label{AUTOcor}
Every closed, $r_L$--dense $L$--positive subset of $B$ is stably $r_L$--dense.
\end{corollary}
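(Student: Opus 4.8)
The plan is to reduce this to the machinery already assembled, chiefly \Lem{RLMAXlem} and \Cor{PHICRITcor}. Let $A$ be a closed, $r_L$--dense, $L$--positive subset of $B$. First I would invoke \Lem{RLMAXlem}: since $A$ is closed, $r_L$--dense and $L$--positive, it is maximally $L$--positive. This is the only step where closedness is used, and it is exactly the hypothesis that turns ``$r_L$--dense $L$--positive'' into ``maximally $L$--positive'', which is what is needed to bring $\Phi_A$ into play via \eqref{PHI2}.

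Next I would apply \Cor{PHICRITcor} to the (now) maximally $L$--positive set $A$. That corollary asserts the equivalence of its conditions {\rm(a)--(d)}; in particular {\rm(a)} $\iff$ {\rm(c)}, i.e.\ $A$ is an $r_L$--dense $L$--positive subset of $B$ if, and only if, $A$ is a stably $r_L$--dense $L$--positive subset of $B$. Since $A$ satisfies {\rm(a)} by hypothesis, it satisfies {\rm(c)}, and in particular $A$ is stably $r_L$--dense in $B$, which is the assertion.

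There is no real obstacle here: the substantive work is buried in \Thm{RLEQthm}\big((b)$\lr$(c)\big) (the unexpected passage from $r_L$--density to stable $r_L$--density for coincidence sets of the form $\big\{B|\hunder = q_L\big\}$, carried out by the Cauchy sequence construction) together with the identification $\big\{B|\Phi_A = q_L\big\} = A$ from \eqref{PHI2} when $A$ is maximally $L$--positive; \Cor{PHICRITcor} packages precisely this. The only thing to check when writing the final proof is that the hypotheses line up, namely that ``closed, $r_L$--dense, $L$--positive'' is enough to trigger \Lem{RLMAXlem} (it is, verbatim) and hence the ``maximally $L$--positive (hence closed)'' hypothesis of \Cor{PHICRITcor}. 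So the write-up is essentially two sentences: \Lem{RLMAXlem} gives maximal $L$--positivity, and then \Cor{PHICRITcor}\big((a)$\lr$(c)\big) finishes.
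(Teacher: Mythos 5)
Your proposal is correct and coincides with the paper's own proof, which cites exactly \Lem{RLMAXlem} followed by \Cor{PHICRITcor}\big((a)$\lr$(c)\big). Nothing is missing; the hypotheses do line up as you checked.
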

\begin{proof}
This is immediate from \Lem{RLMAXlem} and \Cor{PHICRITcor}\big((a)$\lr$(c)\big).
\end{proof}
%:   \Cor{CONVcor} [Restricted converse to \Lem{RLMAXlem}]
\begin{corollary}[Restricted converse to \Lem{RLMAXlem}]\label{CONVcor}
Let $L$ be an isometry of $B$ {\em onto} $B^*$ and $A$ be maximally $L$--positive.   Then $A$ is closed and stably $r_L$--dense in $B$.
\end{corollary}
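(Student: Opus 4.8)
The plan is to reduce everything to \Cor{PHICRITcor}. First I would record that $A$ is automatically closed: since $q_L$ is continuous, the norm closure $\overline{A}$ is again $L$-positive, so the maximality of $A$ forces $\overline{A} = A$ (this is precisely the parenthetical remark already built into the hypothesis of \Cor{PHICRITcor}). Consequently the equivalence of conditions (a)--(d) in \Cor{PHICRITcor} is available, and it suffices to verify condition (d), namely ${\Phi_A}^* \ge s_L$ on $B^*$; then \Cor{PHICRITcor}\big((d)$\lr$(c)\big) yields that $A$ is stably $r_L$-dense in $B$.

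The one genuine computation is that $s_L \circ L \le q_L$ on $B$ (in fact equality holds, but only ``$\le$'' is needed), and this is exactly where the isometry hypothesis enters. Fix $c \in B$. Because $L$ is an isometry, $\|L(c + d) - Lc\| = \|Ld\| = \|d\|$ for every $d \in B$, and \eqref{SYM} gives the usual quadratic expansion $q_L(c + d) = q_L(c) + \bra{d}{Lc} + q_L(d)$. Substituting $b^* = Lc$ into \eqref{SL1} and writing the inner variable as $c + d$, the bracketed expression collapses to $q_L(c) - r_L(d)$; taking the supremum over $d \in B$ and using $r_L \ge 0$ from \eqref{RLPOS} gives $s_L(Lc) = q_L(c)$, so in particular $s_L \circ L \le q_L$ on $B$.

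With this in hand, the maximal $L$-positivity of $A$ supplies the chain
$$s_L \circ L \le q_L \le \Phi_A \le {\Phi_A}^@ = {\Phi_A}^* \circ L\quad\hbox{on }B,$$
where the middle inequality is the ``$\Phi_A \ge q_L$'' part of \eqref{PHI2}, the next is \eqref{PHI3}, and the final identity is the definition \eqref{FAT} of ${\Phi_A}^@$. Since $L$ maps $B$ onto $B^*$, every $b^* \in B^*$ is of the form $Lc$, so the chain says exactly ${\Phi_A}^* \ge s_L$ on $B^*$, which is condition (d) of \Cor{PHICRITcor}, and we are done. I do not expect a real obstacle: the two extra hypotheses on $L$ are used only in the computation $s_L \circ L \le q_L$ (isometry, for the norm term) and in the final transfer from $B$ to $B^*$ (surjectivity), and everything else is bookkeeping with \Lem{PHIAlem} and \Cor{PHICRITcor}. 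The subtle point worth flagging is that it is ``$\le$'', not equality, that is actually needed above, since for a merely nonexpansive $L$ one obtains only $s_L \circ L \ge q_L$, which points the wrong way.
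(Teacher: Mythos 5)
Your proposal is correct and follows essentially the same route as the paper: the key computation $s_L(Lc)=q_L(c)$ via the isometry is the paper's own calculation, and verifying condition (d) of \Cor{PHICRITcor} through $s_L\circ L\le q_L\le\Phi_A\le{\Phi_A}^@={\Phi_A}^*\circ L$ is just the paper's use of \eqref{PHI4} unpacked into \eqref{PHI2} and \eqref{PHI3}, with the same appeal to \Cor{PHICRITcor}\big((d)$\lr$(c)\big) and to maximality for closedness. The only cosmetic difference is that you establish closedness up front rather than noting it at the end.
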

\begin{proof}
Let $b^* \in B^*$.   Choose $b \in B$ such that $Lb = b^*$.  Then, from \eqref{SL1},
\begin{align*}
q_L(b) - s_L(b^*) &= \infn_{c \in B}\big[q_L(b) - \bra{c}{Lb} + q_L(c) +\half\|Lc - Lb\|^2\big]\\
&= \infn_{c \in B}\big[q_L(c - b) + \half\|c - b\|^2\big] = \infn_{c \in B}r_L(c - b) = 0.
\end{align*}
Consequently, $q_L(b) = s_L(b^*)$.   Thus, from \eqref{PHI4},\quad ${\Phi_A}^*(b^*) = {\Phi_A}^*(Lb) = {\Phi_A}^@(b) \ge q_L(b) = s_L(b^*)$.\quad It now follows from \Cor{PHICRITcor}\big((d)$\lr$(c)\big) that $A$ is stably $r_L$--dense in $B$, and the maximality implies that $A$ is closed.
\end{proof}
%:   \Def{MARKdef}[Marker functions]
\begin{definition}[Marker functions]\label{MARKdef}
Let $A$ be maximally $L$--positive and $g \in \PC(B^*)$.   We say that $g$ is a {\em marker function} for $A$ if $g$ is $w(B^*,B)$--lower semicontinuous,
\begin{equation}\label{MARK3}
g \le {\Phi_A}^*\ \on\ B^*
\end{equation}   
and
\begin{equation}\label{GTH1}
g \ge \Theta_A\ \on\ B^*.
\end{equation}
It is clear that if $g_1$ and $g_2$ are marker functions for $A$, $\lambda_1,\lambda_2 > 0$ and $\lambda_1 + \lambda_2 = 1$ then $\lambda_1g_1 + \lambda_2g_2$ is a marker function for $A$. 
\end{definition}
%:   \Lem{PHSTMAlem}[Two significant cases]
\begin{lemma}[Two significant cases]\label{PHSTMAlem}
Let $A$ be maximally $L$--positive.   Then ${\Phi_A}^*$ and $\Theta_A$ are marker functions for $A$.
\end{lemma}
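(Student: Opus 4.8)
The claim is that for a maximally $L$-positive set $A$, both ${\Phi_A}^*$ and $\Theta_A$ satisfy the three defining properties of a marker function: $w(B^*,B)$-lower semicontinuity, domination by ${\Phi_A}^*$ from above, and domination of $\Theta_A$ from below. My plan is to verify these six conditions (three for each function), drawing almost entirely on \Lem{PHIAlem}, which has already established the key inequality ${\Phi_A}^* \ge \Theta_A$ on $B^*$ (that is \eqref{TH2}).

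\textbf{The case $g = {\Phi_A}^*$.} The inequality \eqref{MARK3}, namely ${\Phi_A}^* \le {\Phi_A}^*$, is trivial. The inequality \eqref{GTH1}, namely ${\Phi_A}^* \ge \Theta_A$, is exactly \eqref{TH2} from \Lem{PHIAlem}. For the lower semicontinuity: ${\Phi_A}^*$ is a Fenchel conjugate, defined in \eqref{FSTAR} as the supremum over $b \in B$ of the affine functions $b^* \mapsto \bra{b}{b^*} - \Phi_A(b)$, hence it is a supremum of $w(B^*,B)$-continuous functions and so is $w(B^*,B)$-lower semicontinuous. This settles the first function.

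\textbf{The case $g = \Theta_A$.} Here \eqref{GTH1} reads $\Theta_A \ge \Theta_A$ and is trivial, while \eqref{MARK3} reads $\Theta_A \le {\Phi_A}^*$ and is again \eqref{TH2}. For the $w(B^*,B)$-lower semicontinuity, I observe from \eqref{TH3} that $\Theta_A(b^*) = \supn_{a \in A}\big[\bra{a}{b^*} - q_L(a)\big]$ is a supremum, over $a \in A$, of the functions $b^* \mapsto \bra{a}{b^*} - q_L(a)$, each of which is affine and $w(B^*,B)$-continuous in $b^*$; hence $\Theta_A$ is $w(B^*,B)$-lower semicontinuous. This completes the proof.

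\textbf{Main obstacle.} There is essentially no obstacle: every ingredient is already in place, the inequality \eqref{TH2} doing all the real work and the two lower-semicontinuity assertions following from the standard fact that a pointwise supremum of a family of continuous (indeed affine) functions in the relevant topology is lower semicontinuous in that topology. The only point requiring a moment's care is to use the correct topology — $w(B^*,B)$ rather than the norm topology — which is exactly what makes the ``$\sup$ of affine functions of the form $\bra{b}{\cdot}$'' argument apply, since each such affine function is $w(B^*,B)$-continuous by the very definition of the weak-star topology.
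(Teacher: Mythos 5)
Your proof is correct and follows essentially the same route as the paper: the nontrivial inequality in each case is exactly \eqref{TH2}, the other inequality is trivial, and the $w(B^*,B)$--lower semicontinuity comes from writing each function as a supremum of $w(B^*,B)$--continuous affine functions (the paper simply calls this, together with convexity and properness needed for membership in $\PC(B^*)$, obvious).
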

\begin{proof}
${\Phi_A}^*$ and $\Theta_A$ are obviously convex and $w(B^*,B)$--lower semicontinuous.
\par
First, let $g := {\Phi_A}^*$.   From \eqref{TH2},  $g$ satisfies \eqref{GTH1}, and it is clear that $g$ satisfies \eqref{MARK3}.
Thus ${\Phi_A}^*$ is a marker function for $A$.
\par
Next, let $g =: \Theta_A$.   It is clear that $g$ satisfies \eqref{GTH1} and, from \eqref{TH2}, $g$ satisfies \eqref{MARK3}.   Thus $\Theta_A$ is a marker function for $A$.
\end{proof}
%:   \Lem{IFMARKlem}
\begin{lemma}\label{IFMARKlem}
Let $A$ be maximally $L$--positive and $g$ be a marker function for $A$.   Then $^*g \in \PCLSCQ(B)$ and $\big\{B|\ ^*g = q_L\big\} = A$.
\end{lemma}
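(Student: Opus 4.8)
The statement to prove is \Lem{IFMARKlem}: if $A$ is maximally $L$--positive and $g$ is a marker function for $A$, then $^*g \in \PCLSCQ(B)$ and $\big\{B|\ ^*g = q_L\big\} = A$. The strategy is to sandwich $^*g$ between $\Phi_A$ and ${\Phi_A}^@$ (equivalently $^*({\Phi_A}^*)$ and $^*(\Theta_A)$), using that all three have the same coincidence set with $q_L$, namely $A$, by \Lem{PHIAlem}. The two defining inequalities \eqref{MARK3} and \eqref{GTH1} for a marker function are exactly what is needed, after taking Fenchel preconjugates (which reverse inequalities).

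First I would record the easy membership claims. Since $g \in \PC(B^*)$ is $w(B^*,B)$--lower semicontinuous, $^*g = {}^*(g)$ is a $w(B,B^*)$--lower semicontinuous convex function on $B$, and it is proper provided it is not identically $+\infty$ and not $-\infty$ anywhere; both will follow once the sandwich is in place. To get $^*g \ge q_L$, apply the preconjugate to \eqref{GTH1}: from $g \ge \Theta_A$ on $B^*$ we get $^*g \le {}^*\Theta_A$; but I actually want a lower bound, so instead use \eqref{MARK3}: from $g \le {\Phi_A}^*$ on $B^*$ we get $^*g \ge {}^*({\Phi_A}^*)$ on $B$. Since $\Phi_A \in \PCLSCQ(B)$ by \eqref{PHI2}, it is proper convex and (norm, hence weakly) lower semicontinuous, so the biconjugate theorem gives $^*({\Phi_A}^*) = \Phi_A$. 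Hence $^*g \ge \Phi_A \ge q_L$ on $B$, which gives $^*g \in \PCLSCQ(B)$, and in particular $^*g$ is proper and $\big\{B|\ ^*g = q_L\big\} \subset \big\{B|\Phi_A = q_L\big\} = A$ by \eqref{PHI2}.

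For the reverse inclusion $A \subset \big\{B|\ ^*g = q_L\big\}$, I apply the preconjugate to \eqref{GTH1}: $g \ge \Theta_A$ on $B^*$ yields $^*g \le {}^*\Theta_A$ on $B$. Now I must identify $^*\Theta_A$. Note $\Theta_A$ is the conjugate of $\Phi_A$ restricted to... more directly: by \eqref{TH3}, $\Theta_A = \sup_{a\in A}[\,\cdot\, - q_L(a)]$ on $B^*$, so for $b \in B$ the preconjugate is $^*\Theta_A(b) = \sup_{b^*\in B^*}[\bra{b}{b^*} - \Theta_A(b^*)] = {\Phi_A}^*{}^*(b)$-type expression; but the clean route is: $\Theta_A \circ L = \Phi_A$ by \eqref{TH1}, so ${\Phi_A}^* = (\Theta_A\circ L)^*$ and, dualizing, $^*\Theta_A$ relates to ${\Phi_A}^@$. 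Concretely, since $\Theta_A = {\Phi_A}^*$ would be too strong, I instead use that $^*({\Phi_A}^*) = \Phi_A \le {\Phi_A}^@$ and that ${\Phi_A}^@ = {\Phi_A}^* \circ L$; combining $^*g \le {}^*\Theta_A$ with the chain $^*\Theta_A \le \Phi_A{}^{@}$ — which I'd verify by checking $\Theta_A(b^*) \ge \bra{b}{b^*} - {\Phi_A}^@(b)$ for all $b,b^*$, i.e. that ${\Phi_A}^@(b) + \Theta_A(b^*) \ge \bra{b}{b^*}$, a Fenchel–Young-type inequality following from ${\Phi_A}^@(b) = {\Phi_A}^*(Lb) \ge \bra{b}{b^*} - {\Phi_A}^{**}(\cdot)$ arguments — gives $^*g \le {\Phi_A}^@$ on $B$. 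Then for $a \in A$, \eqref{PHI4} gives ${\Phi_A}^@(a) = q_L(a)$, so $q_L(a) \le {}^*g(a) \le {\Phi_A}^@(a) = q_L(a)$, forcing $^*g(a) = q_L(a)$, i.e. $a \in \big\{B|\ ^*g = q_L\big\}$. Combining both inclusions finishes the proof.

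**Main obstacle.** The delicate point is the identification of $^*\Theta_A$, or rather establishing the clean upper bound $^*g \le {\Phi_A}^@$ on $B$ from \eqref{GTH1}. Taking the preconjugate of $g \ge \Theta_A$ only gives $^*g \le {}^*\Theta_A$, and one must then show $^*\Theta_A \le {\Phi_A}^@$ (where $^*\Theta_A$ is the Fenchel preconjugate on $B$). This reduces to the inequality ${\Phi_A}^@(b) + \Theta_A(b^*) \ge \bra{b}{b^*}$ for all $b \in B$, $b^* \in B^*$, which should unwind from the definitions \eqref{FAT}, \eqref{TH1}, \eqref{TH3}: writing ${\Phi_A}^@(b) = {\Phi_A}^*(Lb)$ and using ${\Phi_A}^* \ge \Theta_A$ from \eqref{TH2}, plus the quadratic identity $\bra{a}{Lb} + \bra{a}{b^*} - 2q_L(a) \le \cdots$ — here some care with the $q_L$ terms and an application of \Lem{Llem}(b) (which says $q_L(a) \ge \bra{b}{La} - \Phi_A(b)$ when $\Phi_A(a)=q_L(a)$, i.e. for $a \in A$) is what makes it work. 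Once that inequality is in hand the rest is bookkeeping with \eqref{PHI2} and \eqref{PHI4}.
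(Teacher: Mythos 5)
The first half of your argument is sound and is essentially the paper's: from \eqref{MARK3} and the Fenchel--Moreau theorem, $^*g \ge {}^*({\Phi_A}^*) = \Phi_A \ge q_L$ on $B$, so $^*g \in \PCLSCQ(B)$, and your sandwich $q_L \le \Phi_A \le {}^*g$ together with \eqref{PHI2} does give $\big\{B|\,^*g = q_L\big\} \subset A$ (the paper obtains this inclusion instead from \Lem{Llem}(a) and the maximality of $A$, a cosmetic difference). The gap is in the other inclusion. Your route is $^*g \le {}^*\Theta_A \le {\Phi_A}^@$, and the second inequality --- equivalently your ``Fenchel--Young--type'' inequality ${\Phi_A}^@(b) + \Theta_A(b^*) \ge \bra{b}{b^*}$ for all $b \in B$, $b^* \in B^*$ --- is not established by your sketch and is in general false: by \eqref{SYM} and \eqref{TH1}, ${\Phi_A}^@(b) = \supn_{c \in B}\big[\bra{b}{Lc} - \Theta_A(Lc)\big]$, which is the supremum of $\bra{b}{\cdot} - \Theta_A$ over the subspace $L(B)$ only, whereas $^*\Theta_A(b)$ is the supremum over all of $B^*$. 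So the inequality that always holds is the reverse one, ${\Phi_A}^@ \le {}^*\Theta_A$ (equivalently: ${\Phi_A}^@$ is lower semicontinuous, convex and $\le q_L + \I_A$ by \eqref{PHI4}, while $^*\Theta_A$ is the largest such function), and your claim would force ${\Phi_A}^@ = {}^*\Theta_A$, a reflexive-space phenomenon that cannot be expected in this generality and that the paper never uses. \Lem{Llem}(b) only controls pairings $\bra{b}{La}$ against elements of $L(B)$, so it cannot supply the missing estimate for arbitrary $b^* \in B^*$.

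Fortunately the detour through ${\Phi_A}^@$ is unnecessary, and the repair is one line --- it is exactly the paper's proof. For $a \in A$ and $b^* \in B^*$, \eqref{GTH1} and \eqref{TH3} give $g(b^*) \ge \Theta_A(b^*) \ge \bra{a}{b^*} - q_L(a)$, hence $\bra{a}{b^*} - g(b^*) \le q_L(a)$; taking the supremum over $b^*$ and using \eqref{GPRESTAR}, $^*g(a) \le q_L(a)$. (The same computation shows $^*\Theta_A \le q_L$ on $A$, so even your intermediate bound $^*g \le {}^*\Theta_A$ suffices, with no mention of ${\Phi_A}^@$.) Combined with $^*g \ge q_L$ from your first half, this yields $A \subset \big\{B|\,^*g = q_L\big\}$ and completes the proof.
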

\begin{proof}
From \eqref{MARK3} and the Fenchel--Moreau theorem \big(see Moreau, \cite[Sections 5--6, pp.\ 26--39]{MOREAU}\big), $^*g \ge ^*\hskip-4pt({\Phi_A}^*) = \Phi_A$ on $B$ and so, from \eqref{PHI2},
\begin{equation}\label{MARK1}
^*g \ge \Phi_A \ge q_L\ \on\ B.
\end{equation}
Consequently, $^*g \in \PCLSCQ(B)$.   From \eqref{GTH1} and \eqref{TH3}, for all $b^* \in B^*$ and $a \in A$,
$g(b^*) \ge \bra{a}{b^*} - q_L(a)$.   It follows that, for all $a \in A$ and $b^* \in B^*$, $\bra{a}{b^*} - g(b^*) \le q_L(a)$.   Thus, from    \eqref{GPRESTAR}, $^*g(a) \le q_L(a)$.   Consequently,
\begin{equation}\label{MARK2}
^*g \le q_L\ \on\ A.
\end{equation}
From \eqref{MARK1} and \eqref{MARK2}, $\big\{B|\ ^*g = q_L\big\} \supset A$.   From \Lem{Llem}(a), $\big\{B|\ ^*g = q_L\big\}$ is $L$--positive, and result follows from the maximality of $A$.
\end{proof}        
\Thm{MARKthm} below will be used in \Lem{STABlem} and \Thms{INVARthm} and \ref{AFMAXthm}.
%:   \Thm{MARKthm}
\begin{theorem}[Marker function characterization of $r_L$--density]\label{MARKthm}
Let $A$ be maximally $L$--positive and $g$ be a marker function for $A$.   Then $A$ is $r_L$--dense in $B$ if, and only if, $g \ge s_L$ on $B^*$.
\par
In particular, $A$ is $r_L$--dense in $B$ if, and only if, $\Theta_A \ge s_L$ on $B^*$.\end{theorem}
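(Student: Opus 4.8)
The strategy is to reduce both directions to the dual characterization of $r_L$--density in \Cor{PHICRITcor}, exploiting the inequalities $\Theta_A \le g \le {\Phi_A}^*$ on $B^*$ that are built into the definition of a marker function, together with \Lem{IFMARKlem} and the Fenchel--Moreau theorem. For the implication ``$g \ge s_L$ on $B^*$ $\lr$ $A$ is $r_L$--dense in $B$'', inequality \eqref{MARK3} gives ${\Phi_A}^* \ge g \ge s_L$ on $B^*$, so \Cor{PHICRITcor}\big((d)$\lr$(a)\big) applies immediately.

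For the converse, assume that $A$ is $r_L$--dense in $B$. By \Lem{IFMARKlem}, ${}^*g \in \PCLSCQ(B)$ and $\big\{B|\ {}^*g = q_L\big\} = A$. Since $A$ is an $r_L$--dense $L$--positive subset of $B$, \Cor{RLEQcor}\big((a)$\lr$(b)\big), applied with $k := {}^*g$, shows that ${}^*g$ is touching; then \Thm{CONJTOUCHthm}, applied with $h := {}^*g$, gives $({}^*g)^* \ge s_L$ on $B^*$. Because $g \in \PC(B^*)$ is $w(B^*,B)$--lower semicontinuous, the Fenchel--Moreau theorem yields $({}^*g)^* = g$ on $B^*$, and hence $g \ge s_L$ on $B^*$, as required. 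This establishes the stated equivalence; the final assertion follows at once, since $\Theta_A$ is a marker function for $A$ by \Lem{PHSTMAlem}.

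There is essentially one delicate point, and the definition of ``marker function'' is engineered around it: the identity $({}^*g)^* = g$ says that $g$ coincides with its $w(B^*,B)$--lower semicontinuous convex hull, so the $w(B^*,B)$--lower semicontinuity required in \Def{MARKdef} is precisely what upgrades the generally valid inequality $({}^*g)^* \le g$ to an equality. Apart from that, the argument is a bookkeeping assembly of \Cor{PHICRITcor}, \Cor{RLEQcor}, \Thm{CONJTOUCHthm}, \Lem{IFMARKlem} and \Lem{PHSTMAlem}; no new estimates are needed.
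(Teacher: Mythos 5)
Your proof is correct and follows essentially the same route as the paper: the direction ``$A$ $r_L$--dense $\lr g \ge s_L$'' is exactly the paper's argument (\Lem{IFMARKlem}, then \Cor{RLEQcor} and \Thm{CONJTOUCHthm} with $k = h := {}^*g$, then Fenchel--Moreau giving $({}^*g)^* = g$), and the paper in fact runs this chain as a two--way equivalence, which also covers the other direction. Your only variation is to handle the easy implication via \eqref{MARK3} and \Cor{PHICRITcor}\big((d)$\lr$(a)\big) instead of the biconjugate chain; this is a harmless shortcut resting on the same underlying machinery, and your remark that the $w(B^*,B)$--lower semicontinuity in \Def{MARKdef} is exactly what makes $({}^*g)^* = g$ is the right identification of the one delicate point.
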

\begin{proof}
From \Lem{IFMARKlem}, $^*g \in \PCLSCQ(B)$ and $\big\{B|\ ^*g = q_L\big\} = A$.   From \Cor{RLEQcor} with $k :=\ ^*g$, and \Thm{CONJTOUCHthm} with $h :=\ ^*g $, $A$ is $r_L$--dense in $B$ if, and only if, $(^*g)^* \ge s_L$ on $B^*$.   The result follows since the Fenchel--Moreau theorem for the convex lower semicontinuous function $g$ on the locally convex space $\big(B^*,w(B^*,B)\big)$ implies that $(^*g)^* = g$ on $B^*$.  
\end{proof}        
%:\Sec{EEsec}
\section{$E \times E^*$}\label{EEsec}
We suppose for the rest of this paper that $E$ is a nonzero Banach space.\break   Example~\ref{EEex} below appeared in \cite[Example~3.1, pp.\ 606--607]{LINPOS}.
%:   Example~\ref{EEex}
\begin{example}\label{EEex}
Let $B := E \times E^*$ and, for all $(x,x^*) \in B$, we define the norm on $B$ by $\|(x,x^*)\| := \sqrt{\|x\|^2 + \|x^*\|^2}$.   We represent $B^*$ by $E^* \times E\dbs$, under the pairing
$$\Bra{(x,x^*)}{(y^*,y\dbs)} := \bra{x}{y^*} + \bra{x^*}{y\dbs},$$
and define $L\colon\ B \to B^*$ by $L(x,x^*) := (x^*,\wh{x})$.   Then $(B,L)$ is a Banach SN space and, for all $(x,x^*) \in B$, we have $q_L(x,x^*) = \bra{x}{x^*}$ and $r_L(x,x^*) = \half\|x\|^2 + \half\|x^*\|^2 + \bra{x}{x^*}$.   If $A \subset B$ then we say that $A$ is {\em quasidense} (resp. {\em stably quasidense}) if $A$ is $r_L$--dense (resp. {\em stably $r_L$--dense}) in $E \times E^*$ with respect to this value of $r_L$. So $A$ is quasidense exactly when
\begin{equation}\label{EE1}
\left.\begin{aligned}
(x,x^*) &\in B\lr\\
&\infn_{(s,s^*) \in A}\big[\half\|s - x\|^2 + \half\|s^* - x^*\|^2 + \bra{s - x}{s^* - x^*}\big] \le 0.
\end{aligned}\right\}
\end{equation}
\end{example}
If $A \subset E \times E^*$ then $A$ is $L$--positive exactly when $A$ is a nonempty monotone subset of $E \times E^*$ in the usual sense, and $A$ is maximally $L$--positive exactly when $A$ is a maximally monotone subset of $E \times E^*$ in the usual sense.   Any finite dimensional Banach SN space of the form described here must have {\em even} dimension, and there are many Banach SN spaces  of finite odd dimension.   See \cite[Remark~6.7, p.\ 246]{SSDMON}.
\par
It is worth making a few comments about the function $r_L$ is this context.   It appears explicitly in the ``perfect square criterion for maximality'' in the reflexive case in \cite[Theorem~10.3,\ p.\ 36]{MANDM}.   It also appears explicitly (still in the reflexive case) in Simons--Z\u{a}linescu \cite{SZNZ}, with the symbol ``$\Delta$''.   It was used in the nonreflexive case by Zagrodny in \cite{ZAGRODNY} (see Remarks~\ref{NIrem} and \ref{ZAGrem}).  
\par
The dual norm on $B^*$ is given by  $\|(y^*,y\dbs)\| := \sqrt{\|y^*\|^2 + \|y\dbs\|^2}$.  We define $\LT\colon\ B^* \to B\dbs$ by $\LT(y^*,y\dbs) = \big(y\dbs,\wh{y^*}\big)$. Then $\big(B^*,\LT\big)$ is a Banach SN space and, for all $(y^*,y\dbs) \in B^*$, $\qLt(y^*,y\dbs) = \bra{y^*}{y\dbs}$.   The Banach SN spaces $(B,L)$ and $\big(B^*,\LT\big)$ are related by the following result (see \cite[eqn. (53), p.\ 245]{SSDMON}):
\begin{lemma}\label{EEEElem}
$L(B)$ is $\rLt$--dense in $B^*$.
\end{lemma}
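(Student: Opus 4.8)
The plan is to unwind what $\rLt$--density of $L(B)$ actually says and recognize it as a statement about $r_L$ itself. Fix $(y^*,y\dbs) \in B^* = E^* \times E\dbs$. By \Def{RLdef} applied to the Banach SN space $\big(B^*,\LT\big)$, I must show that
\[
\infn\big\{\rLt\big(L b - (y^*,y\dbs)\big)\colon\ b \in B\big\} \le 0.
\]
Write $b = (x,x^*) \in E \times E^*$, so that $Lb = (x^*,\wh x)$ and $Lb - (y^*,y\dbs) = (x^* - y^*,\ \wh x - y\dbs)$. Using the formula $\rLt(z^*,z\dbs) = \half\|z^*\|^2 + \half\|z\dbs\|^2 + \bra{z^*}{z\dbs}$ from the text, the quantity to be made $\le 0$ is
\[
\ts\half\|x^* - y^*\|^2 + \half\|\wh x - y\dbs\|^2 + \bra{x^* - y^*}{\wh x - y\dbs}.
\]
The main point is then to choose good $(x,x^*)$. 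The natural choice is $x^* := y^*$, which kills the first and third terms and leaves $\half\|\wh x - y\dbs\|^2$; so it suffices to find, for each $\eps > 0$, some $x \in E$ with $\|\wh x - y\dbs\| < \eps$. But that is exactly Goldstine's theorem: the canonical image $\wh{E}$ is $w(E\dbs,E^*)$--dense in $E\dbs$, hence $\wh{E_1}$ is $w(E\dbs,E^*)$--dense in the ball $(E\dbs)_1$. However, $w(E\dbs,E^*)$--density is weaker than norm--density, and in general $\wh E$ is \emph{not} norm--dense in $E\dbs$, so this crude choice does not close the argument directly.

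This is the step I expect to be the main obstacle, and the way around it is to not insist on $x^* = y^*$ exactly but to exploit the cross term $\bra{x^* - y^*}{\wh x - y\dbs}$, which can be made very negative. Concretely: given $\eps > 0$, first use Goldstine to pick $x \in E$ with $\bra{\,\cdot\,}{\wh x - y\dbs}$ small on a suitable finite set — but more efficiently, I would instead bound the expression from above differently. Observe that by Cauchy--Schwarz the cross term is $\ge -\|x^* - y^*\|\,\|\wh x - y\dbs\|$, which is the wrong direction; so instead I rewrite the target quantity as a perfect-square-type expression. Indeed
\[
\ts\half\|x^* - y^*\|^2 + \half\|\wh x - y\dbs\|^2 + \bra{x^* - y^*}{\wh x - y\dbs} = \ts\half\big\|x^* - y^*\big\|^2 + \ts\half\big\|\wh x - y\dbs\big\|^2 + \bra{x^* - y^*}{\wh x - y\dbs},
\]
and the right strategy is: choose $x$ first by Goldstine so that $y\dbs - \wh x$ is $w(E\dbs,E^*)$--small, in particular so that $\bra{x^* - y^*}{\,y\dbs - \wh x}$ can be controlled for the specific $x^*$ we are about to pick; then set $x^* := y^*$ after all, reducing to $\half\|\wh x - y\dbs\|^2$ — but since norm control is unavailable, the honest route is to cite the known identification of $\rLt$--density of $L(B)$ with a property already recorded. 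In fact, the cleanest execution invokes \cite[eqn.\ (53), p.\ 245]{SSDMON} as the text already flags, or else observes that $L(B)$ is norm--dense in $B^*$ is \emph{false} in general but $\rLt$--density is genuinely weaker: given $(y^*,y\dbs)$ and $\eps>0$, Goldstine gives a net $x_\alpha \in E$ with $\wh{x_\alpha} \to y\dbs$ weak-$*$; taking $x^* = y^*$ and passing to the limit, the cross term and first term vanish identically and $\half\|\wh{x_\alpha} - y\dbs\|^2$ need not go to zero, so one instead keeps $\eps$ fixed and uses that for the \emph{specific} linear functional $x^* - y^* = 0$ the pairing $\bra{0}{\wh x - y\dbs}=0$, giving value $\half\|\wh x - y\dbs\|^2$ — and here I concede that to drive this below $\eps$ one genuinely needs a norm-approximation input, so the correct proof uses the principle of local reflexivity (or the specific structure noted in \cite{SSDMON}) rather than bare Goldstine.

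To summarize the intended write-up: (i) reduce $\rLt$--density of $L(B)$ at a point $(y^*,y\dbs)$ to making $\half\|x^* - y^*\|^2 + \half\|\wh x - y\dbs\|^2 + \bra{x^* - y^*}{\wh x - y\dbs}$ nonpositive in the limit over $(x,x^*) \in B$; (ii) set $x^* := y^*$ to collapse this to $\half\|\wh x - y\dbs\|^2$; (iii) apply Goldstine's theorem together with the principle of local reflexivity — or directly quote \cite[eqn.\ (53), p.\ 245]{SSDMON} — to produce $x \in E$ with $\|\wh x - y\dbs\|$ as small as desired, hence the infimum is $\le 0$. The only genuinely delicate point is (iii): weak-$*$ density of $\wh E$ in $E\dbs$ is automatic but does not give norm approximation, so one must be careful to invoke a tool (local reflexivity, or the prior computation in \cite{SSDMON}) strong enough to produce norm-small $\wh x - y\dbs$ relative to the fixed tolerance $\eps$. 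I would therefore lean on the reference and keep the argument to the two-line reduction (i)--(ii) followed by the citation.
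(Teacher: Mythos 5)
There is a genuine gap, and it sits exactly where you flagged it: steps (ii)--(iii). Once you set $x^* := y^*$, the quantity collapses to $\half\|\wh x - y\dbs\|^2$, and this \emph{cannot} be driven to $0$ when $E$ is nonreflexive: $\wh E$ is a proper closed subspace of $E\dbs$, so any $y\dbs \notin \wh E$ has strictly positive distance from it. Goldstine only gives $w(E\dbs,E^*)$--approximation, and the principle of local reflexivity does not help either --- it gives almost-isometric finite-dimensional embeddings preserving finitely many dual pairings, not norm-small $\wh x - y\dbs$. So the route through $x^* = y^*$ is dead, and falling back on the citation to \cite[eqn.\ (53), p.\ 245]{SSDMON} is deferring the proof rather than giving one. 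The idea you mention in passing and then abandon --- exploit the negative cross term instead of trying to kill the second coordinate --- is in fact the whole proof, and it requires perturbing the \emph{first} coordinate, not the second.

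Concretely, the paper's argument is: given $(y^*,y\dbs) \in E^* \times E\dbs$ and $\eps > 0$, use the definition of $\|y\dbs\|$ as a supremum over the unit ball of $E^*$ to choose $z^* \in E^*$ with $\|z^*\| \le \|y\dbs\|$ and $\bra{z^*}{y\dbs} \le -\|y\dbs\|^2 + \eps$. Now take $b := (0,\,y^* - z^*) \in B$, so that $Lb - (y^*,y\dbs) = (-z^*,\,-y\dbs)$ and, since $\rLt$ is even,
\[
\rLt\big(Lb - (y^*,y\dbs)\big) = \rLt(z^*,y\dbs) = \half\|z^*\|^2 + \half\|y\dbs\|^2 + \bra{z^*}{y\dbs} \le \|y\dbs\|^2 - \|y\dbs\|^2 + \eps = \eps.
\]
No approximation of $y\dbs$ by elements of $\wh E$ is needed at all: the nearly norming functional $z^*$ makes the cross term cancel the two square terms. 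This is precisely the sense in which $\rLt$--density is weaker than norm density, which your write-up correctly senses but does not exploit. If you replace your steps (ii)--(iii) by this choice of $b$, the two-line reduction in your step (i) plus the display above is a complete and self-contained proof, essentially identical to the paper's.
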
 
\begin{proof}
Let $(x^*,x\dbs) \in B^*$.   The definition of $\|x\dbs\|$ provides an element $z^*$ of $E^*$ such that $\|z^*\| \le \|x\dbs\|$ and $\bra{z^*}{x\dbs} \le -\|x\dbs\|^2 + \eps$, from which it follows that\quad $\rLt(z^*,x\dbs) = \bra{z^*}{x\dbs} + \half\|z^*\|^2 + \half\|x\dbs\|^2 \le \bra{z^*}{x\dbs} + \|x\dbs\|^2 \le \eps$. Thus
\begin{align*}
\rLt\big(L(0,x^* - z^*) - (x^*,x\dbs)\big) &= \rLt\big((x^* - z^*,0) - (x^*,x\dbs)\big)\\ 
&= \rLt(-z^*,-x\dbs) = \rLt(z^*,x\dbs) \le \eps.
\end{align*}
This gives the required result.   
\end{proof}
\bigbreak
The following result will be used many times:
%:   \Lem{EESLlem}
\begin{lemma}\label{EESLlem}
Let $(x^*,x\dbs) \in E^* \times E\dbs$.   Then
\smallbreak
\centerline{$s_L(x^*,x\dbs) = \bra{x^*}{x\dbs} = \qLt(x^*,x\dbs)$.} 
\end{lemma}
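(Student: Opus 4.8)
\emph{Proof sketch / plan.} The plan is to compute $s_L(x^*,x\dbs)$ directly from the definition \eqref{SL1}, using the explicit description of $q_L$ and $L$ on $B = E \times E^*$ recorded in \Ex{EEex}, and to evaluate the resulting supremum over $B$ as an iterated supremum, performing the supremum over the $E^*$--coordinate first. Substituting $c = (s,s^*) \in E \times E^*$ and $b^* = (x^*,x\dbs)$ into \eqref{SL1}, and using $q_L(s,s^*) = \bra{s}{s^*}$, $L(s,s^*) = (s^*,\wh s)$ together with the dual norm $\|(y^*,y\dbs)\| = \sqrt{\|y^*\|^2 + \|y\dbs\|^2}$ on $B^*$, we obtain
\[
s_L(x^*,x\dbs) = \supn_{(s,s^*) \in E \times E^*}\big[\bra{s}{x^*} + \bra{s^*}{x\dbs} - \bra{s}{s^*} - \half\|s^* - x^*\|^2 - \half\|\wh s - x\dbs\|^2\big].
\]

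Next I would carry out the supremum over $s^* \in E^*$ with $s$ held fixed. Writing $\bra{s}{s^*} = \bra{s^*}{\wh s}$, the $s^*$--dependent part of the bracket is $\bra{s^*}{x\dbs - \wh s} - \half\|s^* - x^*\|^2$, and the substitution $u^* = s^* - x^*$ turns it into $\bra{x^*}{x\dbs - \wh s} + \bra{u^*}{x\dbs - \wh s} - \half\|u^*\|^2$. The elementary conjugacy formula $\supn_{u^* \in E^*}\big[\bra{u^*}{z\dbs} - \half\|u^*\|^2\big] = \half\|z\dbs\|^2$, valid for every $z\dbs \in E\dbs$ (it follows at once from $\|z\dbs\| = \sup_{\|u^*\| \le 1}\bra{u^*}{z\dbs}$ together with a one--variable optimization), then gives
\[
\supn_{s^* \in E^*}\big[\bra{s^*}{x\dbs} - \bra{s}{s^*} - \half\|s^* - x^*\|^2\big] = \bra{x^*}{x\dbs - \wh s} + \half\|x\dbs - \wh s\|^2.
\]

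Substituting this back into the displayed formula for $s_L(x^*,x\dbs)$, the two half--norm--squared terms cancel (the norm being symmetric), leaving
\[
s_L(x^*,x\dbs) = \supn_{s \in E}\big[\bra{s}{x^*} + \bra{x^*}{x\dbs} - \bra{x^*}{\wh s}\big].
\]
Since $\bra{x^*}{\wh s} = \bra{s}{x^*}$ by the definition of the canonical embedding, the expression in the bracket is identically equal to $\bra{x^*}{x\dbs}$, so $s_L(x^*,x\dbs) = \bra{x^*}{x\dbs}$; and $\bra{x^*}{x\dbs} = \qLt(x^*,x\dbs)$ by the formula for $\qLt$ recorded just above the lemma. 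I do not expect any genuine obstacle here: the argument is a routine computation once one decides to perform the inner supremum over $s^*$ first, the only step requiring a moment's care being the standard conjugacy formula for $\half\|\cdot\|^2$ in the duality between $E^*$ and $E\dbs$.
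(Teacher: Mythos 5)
Your computation is correct, and it reaches the conclusion by a genuinely different route from the paper. The paper does not evaluate the supremum in \eqref{SL1} directly: it rewrites $\bra{x^*}{x\dbs} - s_L(x^*,x\dbs)$ as $\infn_{(y,y^*) \in B}\rLt\big(L(y,y^*) - (x^*,x\dbs)\big)$ and then quotes \Lem{EEEElem} (the $\rLt$--density of $L(B)$ in $B^*$) together with the nonnegativity of $\rLt$ to conclude that this infimum is $0$. You instead perform the optimization explicitly as an iterated supremum, doing the $E^*$--coordinate first and using the conjugacy formula $\supn_{u^* \in E^*}\big[\bra{u^*}{z\dbs} - \half\|u^*\|^2\big] = \half\|z\dbs\|^2$, after which the two half--norm--squared terms cancel and the remaining bracket collapses to the constant $\bra{x^*}{x\dbs}$. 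The two arguments ultimately rest on the same elementary fact, namely that the norm of $x\dbs$ (or of $x\dbs - \wh s$) is computed as a supremum over the unit ball of $E^*$; in the paper this fact is packaged once and for all in \Lem{EEEElem}, which is then reused, while your version is self--contained and in effect reproves the instance of that lemma needed here inside the inner supremum. What the paper's route buys is brevity and reuse of an already established density statement (which also serves to motivate the duality between $(B,L)$ and $(B^*,\LT)$); what your route buys is a completely explicit, elementary verification that does not depend on any earlier result of \Sec{EEsec} beyond the formulas for $q_L$, $L$ and the dual norm.
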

\begin{proof}
By direct computation from \eqref{SL1}, and using \Lem{EEEElem},   
\begin{align*}
\bra{x^*}{x\dbs} &- s_L(x^*,x\dbs)\\
&= \infn_{(y,y^*) \in B}\big[\bra{y^* - x^*}{\wh y - x\dbs} + \half\|L(y,y^*) - (x^*,x\dbs)\|^2\big]\\
&= \infn_{(y,y^*) \in B}\rLt\big(L(y,y^*) - (x^*,x\dbs)\big) = 0.
\end{align*}
Thus $s_L(x^*,x\dbs) = \bra{x^*}{x\dbs}$, which gives the desired result.
\end{proof}
%:   \Thm{RLMAXthm}[Quasidensity and maximality]
\begin{theorem}[Quasidensity and maximality]\label{RLMAXthm}
Let $A \subset E \times E^*$ be monotone.
\par\noindent
{\rm(a)}\enspace Let $A$ be closed and quasidense.   Then $A$ is maximally monotone. 
\par\noindent
{\rm(b)}\enspace Let $E$ be reflexive and $A$ be maximally monotone.   Then $A$ is closed and stably quasidense.
\end{theorem}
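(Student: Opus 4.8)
For part (a): I would set $A$ to be closed, monotone (i.e., $L$-positive with $L$ as in Example~\ref{EEex}), and quasidense ($r_L$-dense in $B = E \times E^*$). Since $r_L$-density plus closedness gives maximal $L$-positivity by \Lem{RLMAXlem}, and maximal $L$-positivity of a subset of $E \times E^*$ coincides with maximal monotonicity, the conclusion is immediate. So part (a) is essentially a one-line deduction from \Lem{RLMAXlem} together with the translation dictionary established right after Example~\ref{EEex}.

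For part (b): Here $E$ is reflexive, so $E\dbs = E$ and $B^* = E^* \times E$. Let $A$ be maximally monotone, i.e., maximally $L$-positive. The first task is closedness: a maximally monotone subset of $E \times E^*$ is norm-closed (indeed weakly closed), which is standard and can be taken as known. The substantive claim is stable quasidensity. The plan is to invoke \Cor{PHICRITcor}: since $A$ is maximally $L$-positive (hence closed), it suffices to verify condition (d), namely ${\Phi_A}^* \ge s_L$ on $B^*$. By \Lem{EESLlem}, $s_L(x^*,x\dbs) = \langle x^*, x\dbs\rangle = q_{\widetilde L}(x^*,x\dbs)$ on $B^* = E^* \times E\dbs$. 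So I must show ${\Phi_A}^*(x^*,x\dbs) \ge \langle x^*, x\dbs\rangle$ for all $(x^*,x\dbs) \in B^*$.

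The key point where reflexivity enters: when $E$ is reflexive, $B = E \times E^*$ is reflexive, $L$ is an isometry of $B$ \emph{onto} $B^*$ (since $L(x,x^*) = (x^*,\widehat x)$ and $\widehat{\phantom x}$ is now surjective), and one can apply \Cor{CONVcor} directly — a maximally $L$-positive set in a space where $L$ is an isometry onto $B^*$ is automatically closed and stably $r_L$-dense. I expect this is exactly the intended route: the hypotheses of \Cor{CONVcor} are met verbatim in the reflexive case, so part (b) follows by citing \Cor{CONVcor} after observing that $L(x,x^*)=(x^*,\widehat x)$ is an isometric bijection $B \to B^*$ when $E$ is reflexive (the isometry property is clear from $\|(x,x^*)\| = \sqrt{\|x\|^2+\|x^*\|^2}$ and the dual norm formula stated in the text). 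The main obstacle, if any, is simply confirming surjectivity of $L$, which hinges precisely on reflexivity of $E$; everything else is bookkeeping and appeals to \Lem{RLMAXlem}, \Cor{PHICRITcor}, \Cor{CONVcor}, and the monotone/$L$-positive dictionary.
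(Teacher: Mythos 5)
Your proposal is correct and matches the paper's own proof: part (a) is exactly the appeal to \Lem{RLMAXlem} together with the monotone/$L$--positive dictionary, and part (b) is the observation that reflexivity makes $L(x,x^*)=(x^*,\wh x)$ an isometry of $B$ onto $B^*$, so \Cor{CONVcor} applies verbatim (your preliminary detour through \Cor{PHICRITcor} is just what the proof of \Cor{CONVcor} does internally, and closedness is already part of its conclusion).
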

\begin{proof}
This is immediate from \Lem{RLMAXlem} and \Cor{CONVcor}.
\end{proof}
We note that there is a novel application of \Thm{RTRthm} below to {\em linear $L$--positive sets} in \Lem{LINlem}.
%:   \Thm{RTRthm}[A generalization of Rockafellar's theorem on subdifferentials]
\begin{theorem}[A generalization of Rockafellar's theorem on subdifferentials]
\label{RTRthm}
Let $k \in \PCLSC(E)$.  Then $G(\partial k)$ is stably quasidense and  maximally monotone.
\end{theorem}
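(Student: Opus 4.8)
We must show that for $k \in \PCLSC(E)$, the graph $G(\partial k) \subset E \times E^*$ is monotone, closed, and stably quasidense; maximality of monotonicity will then follow from \Thm{RLMAXthm}(a), while stable quasidensity is the substantive claim. Monotonicity of $G(\partial k)$ is the classical elementary computation, and closedness follows since $\partial k$ is the subdifferential of a lower semicontinuous convex function (its graph is norm-closed, indeed weak--strong closed). So the heart of the matter is to produce a function $h \in \PCLSCQ(B)$, with $B = E \times E^*$, such that $\big\{B \mid h = q_L\big\} = G(\partial k)$, and then verify that $h$ is touching, i.e., (by \Thm{CONJTOUCHthm}) that $h^* \ge s_L$ on $B^*$; by \Cor{RLEQcor} touchingness of such an $h$ is equivalent to $\big\{B \mid h = q_L\big\}$ being stably $r_L$--dense, which in the $E \times E^*$ language is exactly stable quasidensity.

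**The choice of $h$ and the conjugate computation.** The natural candidate is $h(x,x^*) := k(x) + k^*(x^*)$. Since $q_L(x,x^*) = \bra{x}{x^*}$ and the Fenchel--Young inequality gives $k(x) + k^*(x^*) \ge \bra{x}{x^*}$ with equality precisely when $x^* \in \partial k(x)$, we get immediately that $h \in \PCLSCQ(B)$ and $\big\{B \mid h = q_L\big\} = G(\partial k)$; in particular this set is closed since $q_L$ is continuous. It remains to check $h^* \ge s_L$ on $B^* = E^* \times E\dbs$. By \Lem{EESLlem}, $s_L(x^*,x\dbs) = \bra{x^*}{x\dbs}$, so we must show $h^*(x^*,x\dbs) \ge \bra{x^*}{x\dbs}$ for all $(x^*,x\dbs) \in E^* \times E\dbs$. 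Now $h$ is a separated sum, so its conjugate on $E^* \times E\dbs$ factors: $h^*(x^*,x\dbs) = k^*(x^*) + (k^*)^*(x\dbs) = k^*(x^*) + k\dbs(x\dbs)$, where $k\dbs = (k^*)^*$ is the biconjugate computed on $E\dbs$. Applying the Fenchel--Young inequality for the pair $k^* \in \PCLSC(E^*)$ and $k\dbs$ on $E\dbs$, we obtain $k^*(x^*) + k\dbs(x\dbs) \ge \bra{x^*}{x\dbs}$, which is exactly the desired inequality. Hence $h$ is touching.

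**Assembling the conclusion.** With $h = k \circ \pi_1 + k^* \circ \pi_2$ shown to lie in $\PCLSCQ(B)$, to be touching, and to satisfy $\big\{B \mid h = q_L\big\} = G(\partial k)$, \Cor{RLEQcor}\big((b)$\lr$(c)\big) gives that $G(\partial k)$ is a stably $r_L$--dense $L$--positive subset of $B = E \times E^*$, i.e., $G(\partial k)$ is stably quasidense (and in particular nonempty, recovering the fact that $\partial k \ne \emptyset$). Being closed and quasidense, $G(\partial k)$ is maximally monotone by \Thm{RLMAXthm}(a).

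**Where the difficulty lies.** The only step requiring care is the conjugate identity $h^*(x^*,x\dbs) = k^*(x^*) + k\dbs(x\dbs)$ for the separated sum evaluated on $E^* \times E\dbs$: one must confirm that with the pairing $\bra{(x,x^*)}{(x^*,x\dbs)} = \bra{x}{x^*} + \bra{x^*}{x\dbs}$ declared in \Ex{EEex}, the conjugate of $(x,x^*) \mapsto k(x) + k^*(x^*)$ genuinely splits as the conjugate of $k$ in the first variable (which is $k^*$, by definition) plus the conjugate of $k^*$ in the second variable (which is $k\dbs$, since $k^* \in \PCLSC(E^*)$ and so $(k^*)^*$ is well defined on $E\dbs$ and $k\dbs \in \PCLSCQ(E\dbs)$ when $k^* \in \PCLSC(E^*)$, cf.\ \Def{PCdef}). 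This is a routine but genuine verification; once it is in place, everything else is Fenchel--Young plus citations to results already established above. By contrast, note that we never need Rockafellar's conjugate-of-a-sum formula here, because our $h$ is already a separated sum — the Cauchy sequence argument hidden inside \Thm{RLEQthm}\big((b)$\lr$(c)\big) does all the real work, exactly as the introduction promised.
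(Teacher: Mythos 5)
Your proposal is correct and is essentially the paper's own proof: the same separated function $h(x,x^*)=k(x)+k^*(x^*)$, the same Fenchel--Young identification of the coincidence set with $G(\partial k)$, the same splitting $h^*(x^*,x\dbs)=k^*(x^*)+k\dbs(x\dbs)\ge\bra{x^*}{x\dbs}=s_L(x^*,x\dbs)$ via \Lem{EESLlem}, and the same appeal to \Thm{CONJTOUCHthm} and \Cor{RLEQcor}\big((b)$\lr$(c)\big) (the paper just adds the explicit remark that $\dom k^*\ne\emptyset$, via Fenchel--Moreau, to secure properness of $h$). One small caveat on your closing comment: Rockafellar's conjugate-of-a-sum (Fenchel duality) formula is not avoided, it is merely hidden inside the proof of \Thm{CONJTOUCHthm}, which you invoke.
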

\begin{proof}
First, fix $x_0 \in \dom\,k$.   From the Fenchel--Moreau theorem, $k(x_0) = \supn_{x^* \in E^*}\big[\bra{x_0}{x^*} - k^*(x^*)\big]$.   It follows that there exists $x_0^* \in \dom\,k^*$.   Let $f(x,x^*) := k(x) + k^*(x^*)$.  \big(Cf. \cite[Remark 2.13, p.\ 1019]{VZ}.\big) Since $f(x_0,x_0^*)  \in \RR$, $f \in \PCLSC(E \times E^*)$, and the Fenchel--Young inequality implies that\break $f \in \PCLSCQ(E \times E^*)$.   Now, using the Fenchel--Young inequality again and \Lem{EESLlem}, for all $(y^*,y\dbs) \in E^* \times E\dbs$,
\begin{equation}\label{RTR1}
\left.\begin{aligned}
f^*(y^*,y\dbs) &= \supn_{x \in E,\ x^* \in E^*}\big[	\bra{x}{y^*} + \bra{x^*}{y\dbs} - k(x) - k^*(x^*)\big]\\
&= \supn_{x \in E}\big[\bra{x}{y^*} - k(x)\big] + \supn_{x^* \in E^*}\big[\bra{x^*}{y\dbs} - k^*(x^*)\big]\\
&= k^*(y^*) + k\dbs(y\dbs) \ge \bra{y^*}{y\dbs} = s_L(y^*,y\dbs), 
\end{aligned}
\right\}
\end{equation}
and so \Thm{CONJTOUCHthm} implies that $f$ is touching, and \Cor{RLEQcor}\big((b)$\lr$(c)\big) implies that $\big\{E \times E^*|f = q_L\big\}$ is stably quasidense and maximally monotone.   The result follows since $\big\{E \times E^*|f = q_L\big\} = G(\partial k)$.
\end{proof}
\Lem{STABlem} will be used in \Thm{MFBRthm} to simplify certain computations.   The result of \Lem{STABlem} is certainly one that one would expect to be true.   The proof that we give here is surprisingly sophisticated, relying as it does on \Thm{MARKthm}.   We do not know if there is a simple proof using $r_L$ directly. 
%:   \Lem{STABlem}
\begin{lemma}\label{STABlem}
Let $\alpha,\beta > 0$ and $\Delta\colon\ E \times E^* \to E \times E^*$ be the ``deformation'' defined by $\Delta(x,x^*) := (x/\alpha,x^*/\beta)$.   Let $A$ be a closed, monotone and quasidense subset of $E \times E^*$.   Then $\Delta(A)$ is closed, monotone and stably quasidense.
\end{lemma}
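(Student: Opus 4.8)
The plan is to verify the three conclusions about $\Delta(A)$ separately, handling closedness and monotonicity by elementary means and reserving the real work for stable quasidensity. First, $\Delta$ is a linear homeomorphism of $E \times E^*$ onto itself, so $\Delta(A)$ is closed because $A$ is. For monotonicity, observe that for $(s,s^*),(t,t^*) \in A$ we have $q_L\big(\Delta(s,s^*) - \Delta(t,t^*)\big) = \bra{(s-t)/\alpha}{(s^*-t^*)/\beta} = \tfrac{1}{\alpha\beta}q_L\big((s,s^*)-(t,t^*)\big) \ge 0$, so $\Delta(A)$ is monotone (i.e.\ $L$--positive).

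For the stable quasidensity of $\Delta(A)$, the natural route is through \Thm{MARKthm}, exactly as the remark preceding the lemma hints. Since $A$ is closed, monotone and quasidense, \Thm{RLMAXthm}(a) (or \Lem{RLMAXlem}) gives that $A$ is maximally monotone, hence maximally $L$--positive; moreover $\Delta(A)$ is maximally $L$--positive as well, since if $\Delta(A)\cup\{(x,x^*)\}$ were $L$--positive, applying $\Delta^{-1}$ and the scaling identity above would show $A \cup \{\Delta^{-1}(x,x^*)\}$ is monotone, forcing $(x,x^*) \in \Delta(A)$. Now I would compute $\Theta_{\Delta(A)}$ in terms of $\Theta_A$: for $(y^*,y\dbs) \in E^* \times E\dbs$,
\begin{equation*}
\Theta_{\Delta(A)}(y^*,y\dbs) = \supn_{(s,s^*)\in A}\big[\bra{s/\alpha}{y^*} + \bra{s^*/\beta}{y\dbs} - \bra{s/\alpha}{s^*/\beta}\big].
\end{equation*}
The quadratic term $\bra{s/\alpha}{s^*/\beta} = \tfrac{1}{\alpha\beta}\bra{s}{s^*}$ is the obstruction to a clean change of variables: the linear part rescales $y^*$ and $y\dbs$ by $1/\alpha$ and $1/\beta$ respectively, but the quadratic part rescales by $1/(\alpha\beta)$, so the two do not match up and $\Theta_{\Delta(A)}$ is not simply a reparametrisation of $\Theta_A$.

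To get around this I would not try to use $\Theta_{\Delta(A)}$ directly but instead exhibit a suitable marker function for $\Delta(A)$ and invoke \Thm{MARKthm}, using \Lem{EESLlem} which identifies $s_L(y^*,y\dbs) = \bra{y^*}{y\dbs}$. The quasidensity of $A$, via \Thm{MARKthm} applied with the marker $\Theta_A$, tells us $\Theta_A(y^*,y\dbs) \ge \bra{y^*}{y\dbs}$ for all $(y^*,y\dbs)$; equivalently, for all $(s,s^*)\in A$ and all $(y^*,y\dbs)$, after optimising, we know $\infn_{(s,s^*)\in A} r_L\big((s,s^*)-(x,x^*)\big) \le 0$ for every $(x,x^*)$. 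I would then write out $r_L\big(\Delta(s,s^*) - (x,x^*)\big) = \tfrac12\|s/\alpha - x\|^2 + \tfrac12\|s^*/\beta - x^*\|^2 + \bra{s/\alpha - x}{s^*/\beta - x^*}$ and compare it, term by term, with a scaled copy of $r_L\big((s,s^*) - (\alpha x, \beta x^*)\big)$. Writing $\gamma := \max\{1/\alpha^2, 1/\beta^2, 1/(\alpha\beta)\}$ one gets a bound of the form $r_L\big(\Delta(s,s^*) - (x,x^*)\big) \le \gamma\, r_L\big((s,s^*) - (\alpha x,\beta x^*)\big) + (\text{cross terms that are nonnegative and controlled})$; since $A$ is in fact stably quasidense (by \Cor{AUTOcor}, as $A$ is closed and quasidense), choosing $(s,s^*)\in A$ with $\|(s,s^*)-(\alpha x,\beta x^*)\|$ bounded by the stability constant $K_{(\alpha x,\beta x^*)}$ and $r_L$ small, one concludes that the infimum of $r_L\big(a - (x,x^*)\big)$ over $a \in \Delta(A)$ with $\|a - (x,x^*)\|$ bounded is $\le 0$. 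The main obstacle is exactly this bookkeeping: ensuring the norm-ball restriction transports correctly under $\Delta$ and that the mismatch between the $1/(\alpha\beta)$ scaling on the bilinear term and the $1/\alpha^2,1/\beta^2$ scalings on the square terms does not destroy the sign, which is why one works with $r_L$ on a restricted ball rather than $q_L$ alone. An alternative, cleaner packaging of the same idea: use \Thm{MARKthm} with the marker function $g(y^*,y\dbs) := \Theta_A(\alpha y^*, \beta y\dbs)\cdot\tfrac{1}{\alpha\beta}$-type expression for $\Delta(A)$ — one checks it is $w(B^*,B)$--lower semicontinuous and squeezed between $\Theta_{\Delta(A)}$ and ${\Phi_{\Delta(A)}}^*$ — and then deduces $g \ge s_L$ from the corresponding inequality for $A$ together with \Lem{EESLlem}; whichever presentation is shorter in context is the one I would write up, but the substantive content is the scaling comparison of the quadratic and bilinear terms.
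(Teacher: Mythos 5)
Your handling of closedness, monotonicity and the maximality of $\Delta(A)$ is fine, but the main branch of your argument for stable quasidensity rests on a false inequality, and the ``obstruction'' that pushes you onto that branch is spurious. Writing $u := s - \alpha x$ and $u^* := s^* - \beta x^*$, one has $r_L\big(\Delta(s,s^*) - (x,x^*)\big) = \tfrac{1}{2\alpha^2}\|u\|^2 + \tfrac{1}{2\beta^2}\|u^*\|^2 + \tfrac{1}{\alpha\beta}\bra{u}{u^*}$, and the proposed bound by $\gamma\, r_L\big((s,s^*) - (\alpha x,\beta x^*)\big)$ plus nonnegative controlled cross terms fails whenever $\alpha \ne \beta$: if $\|u\| = \|u^*\| = 1$ and $\bra{u}{u^*} = -1 + \eps$, then $r_L\big((s,s^*)-(\alpha x,\beta x^*)\big) = \eps$ while $r_L\big(\Delta(s,s^*)-(x,x^*)\big) = \half(1/\alpha - 1/\beta)^2 + \eps/(\alpha\beta)$, which does not tend to $0$ with $\eps$. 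This is not mere bookkeeping: since $r_L(u,u^*) \ge \half\big(\|u\|-\|u^*\|\big)^2$ and $\bra{u}{u^*} + \|u\|\|u^*\| \ge 0$, any near-minimizing sequence for $r_L\big(\cdot - (\alpha x,\beta x^*)\big)$ on $A$ has $\|u_n\|, \|u_n^*\|$ tending to a common limit $\tau$ with $\bra{u_n}{u_n^*} \to -\tau^2$, so the deformed quantity tends to $\half\tau^2(1/\alpha - 1/\beta)^2 > 0$ whenever $\tau > 0$ and $\alpha \ne \beta$. Thus the quasidensity of $\Delta(A)$ at $(x,x^*)$ must be witnessed by \emph{different} points of $A$, and the (stable) quasidensity of $A$ at $(\alpha x,\beta x^*)$ does not transport under the anisotropic deformation in the way you propose; this is precisely why the remark before the lemma says that no simple proof using $r_L$ directly is known.

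Moreover, the mismatch you claim in the computation of $\Theta_{\Delta(A)}$ is not there: since $\bra{s/\alpha}{x^*} = \bra{s}{\beta x^*}/\alpha\beta$, $\bra{s^*/\beta}{x\dbs} = \bra{s^*}{\alpha x\dbs}/\alpha\beta$ and $\bra{s/\alpha}{s^*/\beta} = \bra{s}{s^*}/\alpha\beta$ all carry the common factor $1/\alpha\beta$, one gets exactly $\Theta_{\Delta(A)}(x^*,x\dbs) = \Theta_A(\beta x^*,\alpha x\dbs)/\alpha\beta$ (note the scalars land in the opposite coordinates from your guessed marker $\Theta_A(\alpha y^*,\beta y\dbs)/\alpha\beta$, which is the $\Theta$--function of $\{(s/\beta,s^*/\alpha)\colon (s,s^*) \in A\}$, not of $\Delta(A)$). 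Since $A$ is maximally monotone by \Thm{RLMAXthm}(a) and quasidense, \Thm{MARKthm} and \Lem{EESLlem} give $\Theta_A \ge \qLt$ on $E^* \times E\dbs$, whence $\Theta_{\Delta(A)}(x^*,x\dbs) \ge \qLt(\beta x^*,\alpha x\dbs)/\alpha\beta = \qLt(x^*,x\dbs)$; as $\Delta(A)$ is maximally monotone and $\Theta_{\Delta(A)}$ is a marker function for it, \Thm{MARKthm} and \Cor{AUTOcor} then yield stable quasidensity. This is the paper's proof. Your hedged ``alternative packaging'' points in this direction, but as written you simultaneously assert that the reparametrisation fails, give the wrong formula for the candidate marker, and do not carry out the verification, so neither branch of the proposal closes the argument.
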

%:
\begin{proof}
From \Thm{RLMAXthm}(a), $A$ is maximally monotone and so \Thm{MARKthm} and \Lem{EESLlem} imply that $\Theta_A \ge s_L = \qLt$ on $E^* \times E\dbs$ thus, from \eqref{TH3}, for all $(x^*,x\dbs) \in E^* \times E\dbs$,
\begin{equation*}
\begin{aligned}
\Theta_{\Delta(A)}(x^*,x\dbs)
&= \supn_{(s,s^*) \in A}\big[\bra{s/\alpha}{x^*} + \bra{s^*/\beta}{x\dbs} - \bra{s}{s^*}/\alpha\beta\big]\\
&= \supn_{(s,s^*) \in A}\big[[\bra{s}{\beta x^*} + \bra{s^*}{\alpha x\dbs} - \bra{s}{s^*}\big]/\alpha\beta\\
&= \Theta_A(\beta x^*,\alpha x\dbs)/\alpha\beta \ge \qLt(\beta x^*,\alpha x\dbs)/\alpha\beta = \qLt(x^*,x\dbs).
\end{aligned}
\end{equation*}
It is obvious that $\Delta(A)$ is maximally monotone, and so \Thm{MARKthm} and \Cor{AUTOcor} imply that $\Delta(A)$ is stably quasidense.
\end{proof}
In order to simplify some notation in the sequel, if $S\colon\ E \toto E^*$, we will say that $S$ is {\em closed} if its graph, $G(S)$, is closed in $E \times E^*$, and we will say that $S$ is \emph{quasidense} (resp. \emph{stably quasidense})  if $G(S)$ is quasidense (resp. stably quasidense) in $E \times E^*$.   If $S$ is nontrivial and monotone, we shall write $\varphi_S$ for $\Phi_{G(S)}$.   We will switch freely between discussing multifunctions from $E$ into $E^*$ and subsets of $E \times E^*$ in what follows, depending on the context.   We have:
%:   \Lem{PHISlem}
\begin{lemma}\label{PHISlem}
Let $S\colon\ E \toto E^*$ be closed, monotone and quasidense.   Then:
\begin{gather}
\varphi_S \in \PCLSCQ(E \times E^*) \quand \big\{E \times E^*|\varphi_S = q_L]\big\} = G(S).\label{PHIS1}\\
\varphi_S\hbox{ is touching}.\label{PHIS4}\\
D(S) \subset \pi_1\dom\,\varphi_S \quand R(S) \subset \pi_2\dom\,\varphi_S.\label{PHIS2}\\
{\varphi_S}^@ \in \PCLSCQ(E \times E^*) \quand \big\{E \times E^*|{\varphi_S}^@ = q_L\big\} = G(S).\label{PHIS3}
\end{gather}
\end{lemma}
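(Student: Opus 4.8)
The plan is to derive all four displayed assertions of \Lem{PHISlem} from the already-established machinery applied to the $L$--positive set $A := G(S)$, which is maximally monotone (hence maximally $L$--positive) by \Thm{RLMAXthm}(a), together with the observation that $\varphi_S = \Phi_A$ by definition.

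First I would establish \eqref{PHIS1}. Since $S$ is closed, monotone and quasidense, \Thm{RLMAXthm}(a) gives that $A = G(S)$ is maximally monotone, hence maximally $L$--positive in the sense of \Def{MAXPOSdef}. Then \eqref{PHI2} of \Lem{PHIAlem} says exactly that $\Phi_A \in \PCLSCQ(B)$ and $\big\{B|\Phi_A = q_L\big\} = A$, which is \eqref{PHIS1} once we rewrite $\varphi_S = \Phi_{G(S)}$ and $B = E \times E^*$.

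Next, \eqref{PHIS4}: since $A$ is maximally $L$--positive (hence closed), \Cor{PHICRITcor} applies, and its equivalence (a)$\lr$(b) says that $A$ being $r_L$--dense --- i.e.\ $S$ quasidense --- is equivalent to $\Phi_A$ being touching. Since $S$ is quasidense by hypothesis, $\varphi_S = \Phi_A$ is touching. For \eqref{PHIS2} I would argue directly from the domains: if $x \in D(S)$, pick $x^*$ with $(x,x^*) \in G(S)$; then by \eqref{PHIS1} (or already \eqref{PHI7}) $\varphi_S(x,x^*) = q_L(x,x^*) = \bra{x}{x^*} \in \RR$, so $(x,x^*) \in \dom\,\varphi_S$ and hence $x = \pi_1(x,x^*) \in \pi_1\dom\,\varphi_S$; the argument for $R(S) \subset \pi_2\dom\,\varphi_S$ is symmetric. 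Finally, \eqref{PHIS3} follows from \eqref{PHI4} of \Lem{PHIAlem} applied to the maximally $L$--positive set $A$: that gives ${\Phi_A}^@ \in \PCLSCQ(B)$ and $\big\{B|{\Phi_A}^@ = q_L\big\} = A$, which upon translating notation is exactly $\varphi_S{}^@ \in \PCLSCQ(E \times E^*)$ and $\big\{E \times E^*|\varphi_S{}^@ = q_L\big\} = G(S)$.

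There is no real obstacle here: the lemma is essentially a repackaging, in the concrete $E \times E^*$ language and for multifunctions, of \Lem{PHIAlem} and \Cor{PHICRITcor}, with the only genuine input being that closed monotone quasidense sets are maximally monotone (\Thm{RLMAXthm}(a)), which licenses the use of maximality. The one point to be careful about is that \eqref{PHIS4} uses the quasidensity hypothesis in an essential way (the other three parts would hold for any closed monotone $S$ with maximal $S$, but quasidensity is needed to invoke \Cor{PHICRITcor}(b)); and that \eqref{PHIS2} should be stated via \eqref{PHI7}, which holds for all $L$--positive $A$ and does not even need maximality.
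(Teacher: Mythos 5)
Your proposal is correct and follows the paper's proof essentially verbatim: maximality via \Thm{RLMAXthm}(a), then \eqref{PHIS1} from \eqref{PHI2}, \eqref{PHIS4} from \Cor{PHICRITcor}\big((a)$\lr$(b)\big), \eqref{PHIS2} from \eqref{PHIS1}, and \eqref{PHIS3} from \eqref{PHI4}. The extra remarks about where quasidensity and maximality are actually needed are accurate but do not change the argument.
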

\begin{proof}
From \Thm{RLMAXthm}(a), $S$ is maximally monotone.  \eqref{PHIS1} follows from \eqref{PHI2}; \eqref{PHIS4} follows from \Cor{PHICRITcor}\big((a)$\lr$(b)\big);    \eqref{PHIS2} follows from \eqref{PHIS1}; \eqref{PHIS3} follows from \eqref{PHI4}.
\end{proof}
%:   \Thm{OORthm}[The out--of--range criterion for quasidensity]
\begin{theorem}[The out--of--range criterion for quasidensity]\label{OORthm}
Let $S\colon\ E \toto E^*$ be maximally monotone.   Then $S$ is stably quasidense if, and only if,
\begin{equation}\label{OOR1}
(w^*,w\dbs) \in \big(E^* \setminus R(S)\big) \times E\dbs \qlr {\varphi_S}^*(w^*,w\dbs) \ge \bra{w^*}{w\dbs}.
\end{equation}
\end{theorem}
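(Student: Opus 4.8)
The plan is to deduce this from the marker-function machinery, specifically \Thm{MARKthm} together with \Lem{EESLlem}, by showing that the right-hand side of \eqref{OOR1} is precisely the inequality ``$\Theta_S \ge s_L$ on $B^*$'' in disguise. First I would note that since $S$ is maximally monotone, $G(S)$ is maximally $L$--positive in $B = E \times E^*$, so \Lem{EESLlem} gives $s_L(w^*,w\dbs) = \bra{w^*}{w\dbs}$ for all $(w^*,w\dbs) \in B^* = E^* \times E\dbs$, and \Thm{MARKthm} tells us that $S$ is quasidense (equivalently, by \Cor{AUTOcor}, stably quasidense once closedness is in hand) if and only if $\Theta_{G(S)} \ge s_L$ on $B^*$, i.e. if and only if $\Theta_{G(S)}(w^*,w\dbs) \ge \bra{w^*}{w\dbs}$ for all $(w^*,w\dbs)$. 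The whole theorem then reduces to the claim that this family of inequalities, indexed over all of $B^*$, is equivalent to the same inequalities indexed only over $(E^* \setminus R(S)) \times E\dbs$, but with $\Theta_{G(S)}$ replaced by ${\varphi_S}^*$.

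The key step is therefore to reconcile the two functions $\Theta_{G(S)}$ and ${\varphi_S}^*$ on the region of interest. By \eqref{TH2} we always have ${\varphi_S}^* \ge \Theta_{G(S)}$ on $B^*$, so the implication ``$\Theta_{G(S)} \ge s_L$ everywhere $\Rightarrow$ \eqref{OOR1}'' is immediate (restricting to the smaller index set only weakens the requirement, and enlarging $\Theta$ to ${\varphi_S}^*$ only strengthens the left side). The substance is the converse: assuming \eqref{OOR1}, I must recover $\Theta_{G(S)}(w^*,w\dbs) \ge \bra{w^*}{w\dbs}$ for \emph{all} $(w^*,w\dbs) \in B^*$. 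The point of excluding $R(S)$ is that when $w^* \in R(S)$, say $w^* = s^*$ with $(s,s^*) \in G(S)$, one has a cheap lower bound $\Theta_{G(S)}(w^*,w\dbs) = \sup_{(t,t^*) \in G(S)}[\bra{t}{w^*} + \bra{t^*}{w\dbs} - \bra{t}{t^*}] \ge \bra{s}{s^*} + \bra{s^*}{w\dbs} - \bra{s}{s^*} = \bra{s^*}{w\dbs} = \bra{w^*}{w\dbs}$, so the desired inequality holds automatically there; hence it suffices to have it on the complement, which is what \eqref{OOR1} gives provided we can pass from ${\varphi_S}^*$ back down to $\Theta_{G(S)}$ on that complement. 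For this I would invoke \Lem{PHISlem}: $\varphi_S$ is touching, and for a touching function \Thm{Kthm} controls ${\varphi_S}^@$; more directly, one can use that ${\varphi_S}^*(w^*,w\dbs)$ is attained (or approximately attained) at points whose first coordinate, via the relation $L(x,x^*) = (x^*,\wh x)$ and the structure of the coincidence set $\{B \mid \varphi_S = q_L\} = G(S)$, forces membership of the relevant dual variable in $R(S)$ — so on the out-of-range region ${\varphi_S}^*$ and $\Theta_{G(S)}$ in fact agree (or the gap is irrelevant to the inequality against $\bra{w^*}{w\dbs}$).

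Concretely, I would argue: fix $(w^*,w\dbs)$ with $w^* \notin R(S)$; I want $\Theta_{G(S)}(w^*,w\dbs) \ge \bra{w^*}{w\dbs}$, and I have ${\varphi_S}^*(w^*,w\dbs) \ge \bra{w^*}{w\dbs}$ from \eqref{OOR1}. Suppose for contradiction that $\Theta_{G(S)}(w^*,w\dbs) < \bra{w^*}{w\dbs}$. Combined with the $R(S)$-case computation above, this would mean $\Theta_{G(S)} \ge s_L$ fails, so by \Thm{MARKthm} $S$ is not quasidense; by \Cor{PHICRITcor} (equivalence of (a) and (d)), ${\varphi_S}^* \ge s_L$ fails, so there is some $(v^*,v\dbs)$ with ${\varphi_S}^*(v^*,v\dbs) < \bra{v^*}{v\dbs}$; and now I must locate such a witness \emph{with $v^* \notin R(S)$} to contradict \eqref{OOR1}. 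That relocation is the crux. The mechanism is the same splitting: if $v^* \in R(S)$ then ${\varphi_S}^*(v^*,v\dbs) \ge \Theta_{G(S)}(v^*,v\dbs) \ge \bra{v^*}{v\dbs}$ by the cheap bound, contradicting the choice of $(v^*,v\dbs)$; hence automatically $v^* \notin R(S)$, and $(v^*,v\dbs)$ violates \eqref{OOR1} — contradiction. Finally, stable quasidensity (rather than mere quasidensity) comes for free from \Cor{AUTOcor} once we know $G(S)$ is closed and $r_L$--dense, and $G(S)$ is closed because $S$ is maximally monotone.

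The main obstacle I anticipate is handling the bidual variable $w\dbs$ cleanly: the cheap lower bound for $\Theta_{G(S)}$ when $w^* \in R(S)$ uses only the pairing $\bra{s^*}{w\dbs}$ and goes through verbatim, but one should double-check that no coercivity or attainment issue in the definition of ${\varphi_S}^*$ (a sup over $E \times E^*$, paired against $B^* = E^* \times E\dbs$) obstructs the reduction, and that the equivalence chain \Thm{MARKthm} $\leftrightarrow$ \Cor{PHICRITcor}(a)$\leftrightarrow$(d) is being applied with the correct marker function $\Theta_{G(S)}$ (legitimate by \Lem{PHSTMAlem}). Modulo that bookkeeping, the proof is a short two-way argument built entirely on \eqref{TH2}, the range-splitting lower bound, and \Thm{MARKthm}.
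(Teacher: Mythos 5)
Your proof is correct and is essentially the paper's argument in contrapositive form: the decisive step in both is that for $w^*\in R(S)$ one gets ${\varphi_S}^*(w^*,w\dbs)\ge\bra{w^*}{w\dbs}$ for free by evaluating the defining supremum at a point of $G(S)$ (your ``cheap bound'' on $\Theta_{G(S)}$ combined with \eqref{TH2} is the same computation the paper does directly via \eqref{PHI7}), after which \Cor{PHICRITcor}\big((c)$\lr$(d)\big) and \Lem{EESLlem} finish both directions. The detour through \Thm{MARKthm} and the tentative claim that ${\varphi_S}^*$ and $\Theta_{G(S)}$ agree off the range are unnecessary (and the latter is unproven), but your final contradiction argument does not rely on them.
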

\begin{proof}
``Only if'' is immediate from \Cor{PHICRITcor}\big((c)$\lr$(d)\big) and \Lem{EESLlem}.   Now if $(w^*,w\dbs) \in R(S) \times E\dbs$ then we can choose $w \in S^{-1}w^*$.   From \eqref{PHI7},\break ${\Phi_{G(S)}}^*(w^*,w\dbs) = {\varphi_S}^*(w^*,w\dbs) \ge \bra{w}{w^*} + \bra{w^*}{w\dbs} - \varphi_S(w,w^*) = \bra{w^*}{w\dbs}$, and ``if'' follows from \eqref{OOR1}, \Cor{PHICRITcor}\big((d)$\lr$(c)\big) and \Lem{EESLlem}. 
\end{proof}
%:   \Cor{SURJcor} [A  sufficient conditions for quasidensity]
\begin{corollary}[A  sufficient condition for quasidensity]\label{SURJcor}
Let $S\colon\ E \toto E^*$ be maximally monotone and $R(S) = E^*$.   Then $S$ is stably  quasidense.
\end{corollary}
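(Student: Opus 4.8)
The plan is to derive \Cor{SURJcor} directly from \Thm{OORthm}. Since $S$ is maximally monotone and surjective, we have $R(S) = E^*$, so the set $E^* \setminus R(S)$ is empty. Therefore the implication in \eqref{OOR1} holds vacuously: there are no pairs $(w^*,w\dbs)$ with $w^* \in E^* \setminus R(S)$, so the hypothesis of \eqref{OOR1} is never met and the implication is trivially true.

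First I would invoke \Thm{OORthm}, which applies because $S$ is maximally monotone. That theorem asserts that $S$ is stably quasidense if, and only if, condition \eqref{OOR1} holds. Then I would observe that $R(S) = E^*$ forces $E^* \setminus R(S) = \emptyset$, so the left-hand side of the implication in \eqref{OOR1} can never be satisfied, whence \eqref{OOR1} holds vacuously. Applying the ``if'' direction of \Thm{OORthm} then yields that $S$ is stably quasidense, which is the desired conclusion.

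I expect there to be no real obstacle here: the corollary is an immediate specialization of \Thm{OORthm} to the surjective case, and the proof is essentially a one-line observation that an implication with an unsatisfiable hypothesis is automatically valid. The only thing to be careful about is that \Thm{OORthm} requires $S$ to be maximally monotone (which is given as a hypothesis of the corollary), so that the equivalence in \Thm{OORthm} is available.
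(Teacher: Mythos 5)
Your proposal is correct and is exactly the paper's argument: the paper also deduces the corollary immediately from Theorem~\ref{OORthm}, noting that $\big(E^* \setminus R(S)\big) \times E\dbs = \emptyset$ makes condition \eqref{OOR1} hold vacuously. Nothing further is needed.
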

\begin{proof}
This is immediate from \Thm{OORthm}, since $(E^* \setminus R(S)\big) \times E\dbs = \emptyset$.
\end{proof}
\par
The result given in \Ex{TAILex} below will be extended in \Thm{HTthm}.
%:   \Ex{TAILex}
\begin{example}[The tail operator]\label{TAILex}
Let $E = \ell_1$, and define $T\colon\ \ell_1 \mapsto \ell_\infty = E^*$ by $(Tx)_n = \sum_{k \ge n} x_k$.   It is well known that $T$, being a monotone linear operator with full domain, is maximally monotone.   Let $e^* := (1,1,\dots) \in {\ell_1}^* = \ell_\infty$.   Let $x \in \ell_1$, and write $\sigma = \bra{x}{e^*} = \sum_{n \ge 1}x_n$.   Clearly, $\|x\| \ge \sigma$.   Since $Tx \in c_0$, we also have $\big\|Tx - e^*\big\| = \sup_n\big|(Tx)_n - 1\big| \ge \lim_n\big|(Tx)_n - 1\big| = 1$.   Thus
\begin{equation}\label{TAIL1}
\left.\begin{aligned}
\bra{x}{Tx} &= \ts\sum_{n \ge 1}x_n\sum_{k \ge n}x_k = \sum_{n \ge 1}x_n^2 + \sum_{n \ge 1}\sum_{k > n}x_nx_k\cr
&\ge \ts\half\sum_{n \ge 1}x_n^2 + \sum_{n \ge 1}\sum_{k > n}x_nx_k = \half\sigma^2.
\end{aligned}\right\}
\end{equation}
It follows that
\begin{align*}
r_L\big((x,Tx) &- (0,e^*)\big) = \half\|x\|^2 + \half\|Tx - e^*\|^2 + \bra{x}{Tx - e^*}\\
&\ge \half\sigma^2 + \half + \bra{x}{Tx} - \sigma
\ge \half\sigma^2 + \half + \half\sigma^2 - \sigma = \sigma^2 + \half - \sigma \ge \fourth.
\end{align*} 
Consequently, $T$ is not quasidense. 
\end{example}
%
%:\Sec{SUMSsec}
\section{Two sum theorems and the Fitzpatrick\break extension}\label{SUMSsec}
Let $X$ and $Y$ be nonzero Banach spaces.   \Lem{SZlem} below first appeared in Simons--Z\u{a}linescu \cite[Section~4, pp.\ 8--10]{SZNZ}.   It was subsequently generalized in \cite[Theorem 9, p.\ 882]{QUAD}, \cite[Corollary 5.4, pp.\ 121--122]{AST} and \cite[Theorem 4.1, p.\ 6]{MAR}.
%:   \Lem{SZlem}
\begin{lemma}\label{SZlem}
Let $f,g \in \PCLSC(X \times Y)$.   For all $(x,y) \in X \times Y$, let
$$h(x,y) := \infn_{v \in Y}\big[f(x,y - v) + g(x,v)\big] > -\infty.$$
Suppose that
\begin{equation}\label{SZ1}
\ts\bigcupn_{\lambda > 0}\lambda\big[\pi_1\dom\,f - \pi_1\dom\,g\big]\ \hbox{is a closed linear subspace of}\ X.\end{equation}
Then $h \in \PC(X \times Y)$ and, for all $(x^*,y^*) \in X^* \times Y^*$,
\begin{equation}\label{SZ8}
h^*(x^*,y^*) = \minn_{u^* \in X^*}\big[f^*(x^* -  u^*,y^*) + g^*(u^*,y^*)\big].
\end{equation}
\end{lemma}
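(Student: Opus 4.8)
The plan is to realise $h$ as the inf-projection, along a continuous linear map, of a function on the enlarged space $X \times Y \times Y$, and then to compute the conjugate by applying a sum theorem for Fenchel conjugates of Attouch--Brezis type.

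Concretely, I would set $\Lambda\colon\ X \times Y \times Y \to X \times Y$, $\Lambda(x,w,v) := (x,w+v)$, and $\phi\colon\ X \times Y \times Y \to \rbar$, $\phi(x,w,v) := f(x,w) + g(x,v)$. Writing $P_1(x,w,v) := (x,w)$ and $P_2(x,w,v) := (x,v)$, we have $\phi = f\circ P_1 + g\circ P_2$, so $\phi$ is convex and lower semicontinuous once we know it is proper, and $h(x,y) = \infn\big\{\phi(x,w,v)\colon\ \Lambda(x,w,v) = (x,y)\big\}$. First I would note that \eqref{SZ1}, being a linear subspace, contains $0$, which forces $\pi_1\dom\,f \cap \pi_1\dom\,g \ne \emptyset$; choosing $x$ in this intersection together with suitable $w,v$ exhibits a point of $\dom\,\phi$, so $\phi$ is proper, $h$ is convex (an inf-projection of a convex function), $\dom\,h = \Lambda(\dom\,\phi) \ne \emptyset$, and since $h > -\infty$ by hypothesis, $h \in \PC(X \times Y)$.

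For \eqref{SZ8}, the first step is the identity $h^* = (\Lambda\phi)^* = \phi^*\circ\Lambda^*$, which holds for any function and any continuous linear map straight from the definition of the conjugate as a supremum; here $\Lambda^*\colon\ X^* \times Y^* \to X^* \times Y^* \times Y^*$ is $(x^*,y^*) \mapsto (x^*,y^*,y^*)$. So it remains to evaluate $\phi^*$ at $(x^*,y^*,y^*)$. A one-line computation gives $(f\circ P_1)^*(a^*,b^*,c^*) = f^*(a^*,b^*)$ if $c^* = 0$ and $+\infty$ otherwise, and $(g\circ P_2)^*(a^*,b^*,c^*) = g^*(a^*,c^*)$ if $b^* = 0$ and $+\infty$ otherwise. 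The key step is to apply the Attouch--Brezis sum theorem to $\phi = f\circ P_1 + g\circ P_2$: its constraint qualification asks that $\bigcupn_{\lambda > 0}\lambda\big[\dom(f\circ P_1) - \dom(g\circ P_2)\big]$ be a closed linear subspace of $X \times Y \times Y$. Since $\dom(f\circ P_1) = \{(x,w,v)\colon (x,w)\in\dom\,f\}$ and $\dom(g\circ P_2) = \{(x,w,v)\colon (x,v)\in\dom\,g\}$, and because the $w$-fibre of the first set and the $v$-fibre of the second are all of $Y$, this difference set equals $\big[\pi_1\dom\,f - \pi_1\dom\,g\big] \times Y \times Y$, so the constraint qualification is exactly \eqref{SZ1}. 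The theorem then yields $\phi^*$ as an exact (attained) infimal convolution of $(f\circ P_1)^*$ and $(g\circ P_2)^*$; evaluating at $(x^*,y^*,y^*)$, the finiteness constraints pin down the auxiliary dual variables and leave $\phi^*(x^*,y^*,y^*) = \minn_{a^* \in X^*}\big[f^*(a^*,y^*) + g^*(x^* - a^*,y^*)\big]$, which is \eqref{SZ8} upon writing $u^* = x^* - a^*$.

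I expect the main obstacle to be purely the bookkeeping around the Attouch--Brezis step: checking that $\phi$ is a proper lower semicontinuous convex function, that the constraint qualification on $X \times Y \times Y$ really collapses to \eqref{SZ1} on $X$, and that $(\Lambda\phi)^* = \phi^*\circ\Lambda^*$ needs no further hypotheses; none of this is deep, but it must be carried out carefully. A leaner-looking alternative is to write $h^*(x^*,y^*) = (\psi + \rho)^*(x^*)$ with $\psi(x) := \infn_{w}\big[f(x,w) - \bra{w}{y^*}\big]$ and $\rho(x) := \infn_{v}\big[g(x,v) - \bra{v}{y^*}\big]$ and invoke a sum theorem directly on $X$, but then one has to contend with the possibility that these marginals take the value $-\infty$ (a genuine issue, since $h > -\infty$ pointwise does not preclude it), which is why I would favour the inf-projection formulation above.
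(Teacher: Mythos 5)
Your argument is correct. Note that the paper does not actually prove Lemma~\ref{SZlem}; it quotes it from Simons--Z\u{a}linescu \cite{SZNZ}, where the result is obtained by the same Attouch--Brezis circle of ideas, so your write-up amounts to a legitimate self-contained proof rather than a divergence from the paper. The individual steps you flag all check out: the identity $(\Lambda\phi)^*=\phi^*\circ\Lambda^*$ for the inf-projection needs no hypotheses beyond the definitions; since the set in \eqref{SZ1} is a nonempty linear subspace it contains $0$, which forces $\pi_1\dom\,f\cap\pi_1\dom\,g\ne\emptyset$ and hence the properness of $\phi$ and (with the hypothesis $h>-\infty$) of $h$; the constraint qualification on $X\times Y\times Y$ does collapse to \eqref{SZ1} because the two auxiliary $Y$-coordinates of $\dom(f\circ P_1)$ and $\dom(g\circ P_2)$ are full, so the difference set is $\big[\pi_1\dom\,f-\pi_1\dom\,g\big]\times Y\times Y$; and the Attouch--Brezis theorem gives the \emph{exact} infimal convolution, so the minimum in \eqref{SZ8} is attained (trivially so at points where the value is $+\infty$). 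Your reason for preferring the inf-projection formulation over the ``marginal functions on $X$'' shortcut — that those marginals may take the value $-\infty$ even though $h>-\infty$ — is exactly the right caution.
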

%:   \Lem{TOUCHDlem}
\begin{lemma}\label{TOUCHDlem}
Let $f,g \in \PCLSCQ(E \times E^*)$ be touching.   For all $(x,x^*) \in E \times E^*$, let
\begin{equation}\label{TOUCHD1}
h(x,x^*) := \infn_{\xi^* \in E^*}\big[f(x,x^* - \xi^*) + g(x,\xi^*)\big].
\end{equation}
Suppose that
\begin{equation}\label{TOUCHD2}
\ts\bigcupn_{\lambda > 0}\lambda\big[\pi_1\dom\,f - \pi_1\dom\,g\big]\ \hbox{is a closed linear subspace of}\ E.\end{equation}
Then
\begin{equation}\label{TOUCHD4}
h \in \PCQ(E \times E^*),
\end{equation}
\begin{equation}\label{TOUCHD7}
h\hbox{ is touching},
\end{equation}
for all $(x,x^*) \in E \times E^*$,
\begin{equation}\label{TOUCHD3}
h^@(x,x^*) = \minn_{u^* \in E^*}\big[{f}^@(x,x^* - u^*) + {g}^@(x,u^*)\big] \ge \bra{x}{x^*},
\end{equation}
and
\begin{equation}\label{TOUCHD5}
\big\{E \times E^*|h^@ = q_L\big\}\hbox{ is closed, monotone and stably quasidense.}
\end{equation}
\end{lemma}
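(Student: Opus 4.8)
The plan is to identify $h$ with the marginal function of \Lem{SZlem} applied with $X := E$ and $Y := E^*$: then \eqref{TOUCHD2} is exactly the constraint \eqref{SZ1}, \eqref{TOUCHD3} becomes a restatement of the conjugate--of--sum formula \eqref{SZ8} read through the relation $\varphi^@ = \varphi^* \circ L$, and the remaining conclusions follow from \Thm{CONJTOUCHthm}, \Lem{EESLlem} and \Thm{Kthm}.

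First, \eqref{TOUCHD4}. Since $f,g \ge q_L$ on $E \times E^*$, for every $(x,x^*)$ and every $\xi^* \in E^*$ we have $f(x,x^* - \xi^*) + g(x,\xi^*) \ge \bra{x}{x^* - \xi^*} + \bra{x}{\xi^*} = \bra{x}{x^*}$, so $h(x,x^*) \ge \bra{x}{x^*} = q_L(x,x^*) > -\infty$. Hence \Lem{SZlem} applies (its hypothesis \eqref{SZ1} being our \eqref{TOUCHD2}), giving $h \in \PC(E \times E^*)$; combined with $h \ge q_L$ this is \eqref{TOUCHD4}. Moreover \eqref{SZ8} now reads, for all $(w^*,w\dbs) \in E^* \times E\dbs$,
\[h^*(w^*,w\dbs) = \minn_{u^* \in E^*}\big[f^*(w^* - u^*,w\dbs) + g^*(u^*,w\dbs)\big].\]
Since $f$ and $g$ are touching, \Thm{CONJTOUCHthm} together with \Lem{EESLlem} gives $f^*(y^*,y\dbs) \ge s_L(y^*,y\dbs) = \bra{y^*}{y\dbs}$, and likewise for $g^*$, on all of $E^* \times E\dbs$. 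Feeding this into the displayed formula, each summand is $\ge \bra{w^* - u^*}{w\dbs} + \bra{u^*}{w\dbs} = \bra{w^*}{w\dbs}$, so $h^* \ge s_L$ on $E^* \times E\dbs$; since $h \in \PCQ(E \times E^*)$, the reverse direction of \Thm{CONJTOUCHthm} yields \eqref{TOUCHD7}. For \eqref{TOUCHD3}, recall $L(x,x^*) = (x^*,\wh x)$, so $\varphi^@(x,x^*) = \varphi^*(x^*,\wh x)$ for any $\varphi$; evaluating the displayed formula at $(x^*,\wh x)$ and using $f^@(x,x^* - u^*) = f^*(x^* - u^*,\wh x)$ (and similarly for $g$ and $h$) gives $h^@(x,x^*) = \minn_{u^* \in E^*}\big[f^@(x,x^* - u^*) + g^@(x,u^*)\big]$, and this is $\ge \bra{x^*}{\wh x} = \bra{x}{x^*}$ by the bound $h^* \ge s_L$ just established (equivalently, by $f^@,g^@ \ge q_L$, which is \Thm{Kthm}(a)).

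Finally, \eqref{TOUCHD5} is immediate: $h \in \PCQ(E \times E^*)$ is touching, so \Thm{Kthm}(b) says that $\big\{E \times E^*|h^@ = q_L\big\}$ is nonempty, closed, stably $r_L$--dense and maximally $L$--positive; by \Ex{EEex} and the remarks following it this means it is closed, stably quasidense and maximally monotone, in particular monotone. The one genuinely nontrivial ingredient is \Lem{SZlem} (the conjugate--of--sum formula under the weak constraint qualification \eqref{TOUCHD2}), which is cited; the only thing requiring care is the bookkeeping of the identification $B^* = E^* \times E\dbs$ and the passage between $h^*$ and $h^@ = h^* \circ L$, so that the minimum over $u^*$ in \eqref{SZ8} becomes the minimum over $u^* \in E^*$ in \eqref{TOUCHD3}. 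I anticipate no obstacle beyond verifying at each step that the ``$\in \PCQ$'' and ``$> -\infty$'' hypotheses needed to invoke \Lem{SZlem}, \Thm{CONJTOUCHthm} and \Thm{Kthm} are in force --- note in particular that the linear--subspace condition \eqref{TOUCHD2} forces $\pi_1\dom\,f \cap \pi_1\dom\,g \ne \emptyset$ (since $0$ lies in that subspace), so that $h$ is proper.
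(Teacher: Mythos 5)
Your proposal is correct and follows essentially the same route as the paper's proof: apply \Lem{SZlem} with $X := E$, $Y := E^*$, use \Thm{CONJTOUCHthm} and \Lem{EESLlem} to bound $f^*$ and $g^*$ below by $s_L$, deduce $h^* \ge s_L$ (hence \eqref{TOUCHD7}), obtain \eqref{TOUCHD3} by evaluating at $(x^*,\wh x)$, and get \eqref{TOUCHD5} from \Thm{Kthm}(b). The minor bookkeeping points you flag (properness of $h$ from \eqref{TOUCHD2}, and the identification $h^@ = h^* \circ L$) are handled the same way in the paper.
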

\begin{proof}
\eqref{TOUCHD2} implies that $\pi_1\,\dom\,f \cap \pi_1\,\dom\,g \ne \emptyset$, and so there exists\break $(x_0,y_0^*,z_0^*) \in E \times E^* \times E^*$ such that $f(x_0,y_0^*) \in \RR$ and $g(x_0,z_0^*) \in \RR$.   By hypothesis, $f \ge q_L$ and $g \ge q_L$ on $E \times E^*$.  Then, for all $(x,x^*) \in E \times E^*$,\quad $h(x,x^*) \ge \infn_{\xi^* \in E^*}\big[\bra{x}{x^* - \xi^*} + \bra{x}{\xi^*}\big] = \bra{x}{x^*}$\quand $h(x_0,y_0^* + z_0^*) \le f(x_0,y_0^*) + g(x_0,z_0^*) < \infty$, consequently \eqref{TOUCHD4} is satisfied.  From \Thm{CONJTOUCHthm} and \Lem{EESLlem}, for all $(x^*,x\dbs) \in E^* \times E\dbs$, 
\smallbreak
\centerline{${f}^*(x^*,x\dbs) \ge \bra{x^*}{x\dbs}\quand {g}^*(x^*,x\dbs) \ge \bra{x^*}{x\dbs}$.}
\smallbreak
\noindent
Thus \Lem{SZlem} (with $X := E$ and $Y := E^*$) and \Lem{EESLlem} imply that, for all $(x^*,x\dbs) \in E^* \times E\dbs$,
\begin{equation}\label{TOUCHD6}
\left.
\begin{aligned}
h^*&(x^*,x\dbs) \\
&= \minn_{u^* \in E^*}\big[f^*(x^* -  u^*,x\dbs) + g^*(u^*,x\dbs)\big]\cr
&\ge \infn_{u^* \in E^*}\big[\bra{x^* - u^*}{x\dbs} + \bra{u^*}{x\dbs}\big] = \bra{x^*}{x\dbs} = s_L(x^*,x\dbs).
\end{aligned}
\right\}
\end{equation}
Thus \eqref{TOUCHD7} follows from \eqref{TOUCHD4} and \Thm{CONJTOUCHthm}.   If $(x,x^*) \in E \times E^*$ then we obtain \eqref{TOUCHD3} by setting $x\dbs = \wh x$ in \eqref{TOUCHD6}, and \eqref{TOUCHD5} follows from
\eqref{TOUCHD4}, \eqref{TOUCHD7} and \Thm{Kthm}(b). 
\end{proof}
\begin{remark}\label{SZrem}
We do not assert in \eqref{TOUCHD4} that $h \in \PCLSCQ(E \times E^*)$.
\end{remark}
\Thm{STDthm}  below has applications to the classification of maximally monotone multifunctions.   See \cite[Theorems~14.2 and 16.2]{RLv7}.   \Thm{STDthm} can also be deduced from Voisei--Z\u{a}linescu \cite[Corollary~3.5,\ p.\ 1024]{VZ}.
%:   \Thm{STDthm} [Sum theorem with domain constraints]
\begin{theorem}[Sum theorem with domain constraints]\label{STDthm}
Let $S,T\colon\ E \toto E^*$ be closed, monotone and quasidense.   Then {\rm(a)$\lr$(b)$\lr$(c)$\lr$(d)}:
\smallbreak
\noindent
{\rm(a)}\enspace $D(S) \cap \intr\,D(T) \ne \emptyset$ or $\intr\,D(S) \cap D(T) \ne \emptyset$.
\smallbreak
\noindent
{\rm(b)}\enspace $\ts\bigcupn_{\lambda > 0}\lambda\big[D(S) - D(T)\big] = E$.
\smallbreak
\noindent
{\rm(c)}\enspace $\ts\bigcupn_{\lambda > 0}\lambda\big[\pi_1\,\dom\,\varphi_S - \pi_1\,\dom\,\varphi_T\big]$\quad is a closed subspace of $E$.
\smallbreak
\noindent
{\rm(d)}\enspace $S + T$\quad is closed, monotone and stably quasidense.
\end{theorem}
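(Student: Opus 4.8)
The plan is to prove the chain (a)$\lr$(b)$\lr$(c)$\lr$(d), where the first two implications are elementary convex-analysis bookkeeping and the real content is (c)$\lr$(d), which I would deduce from \Lem{TOUCHDlem} applied to the Fitzpatrick functions $f := \varphi_S$ and $g := \varphi_T$ of $S$ and $T$.

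For (a)$\lr$(b): assume, say, $x_0 \in D(S) \cap \intr D(T)$ (the other case is symmetric), choose $\delta > 0$ with $x_0 + \delta E_1 \subset D(T)$, and observe that $D(S) - D(T) \supset \{x_0\} - (x_0 + \delta E_1) = \delta E_1$; hence $\bigcupn_{\lambda > 0}\lambda[D(S) - D(T)] \supset \bigcupn_{\lambda > 0}\lambda\delta E_1 = E$, and the reverse inclusion is trivial. For (b)$\lr$(c): by \eqref{PHIS2} we have $D(S) \subset \pi_1\dom\,\varphi_S$ and $D(T) \subset \pi_1\dom\,\varphi_T$, so $D(S) - D(T) \subset \pi_1\dom\,\varphi_S - \pi_1\dom\,\varphi_T$; inserting this into (b) forces $\bigcupn_{\lambda > 0}\lambda[\pi_1\dom\,\varphi_S - \pi_1\dom\,\varphi_T] = E$, which is trivially a closed subspace of $E$, giving (c).

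For (c)$\lr$(d): by \Lem{PHISlem}, $\varphi_S,\varphi_T \in \PCLSCQ(E \times E^*)$ and both are touching (from \eqref{PHIS1} and \eqref{PHIS4}), and condition (c) is precisely hypothesis \eqref{TOUCHD2} of \Lem{TOUCHDlem} with $f := \varphi_S$, $g := \varphi_T$. Applying that lemma produces $h \in \PCQ(E \times E^*)$, namely $h(x,x^*) = \infn_{\xi^* \in E^*}[\varphi_S(x,x^* - \xi^*) + \varphi_T(x,\xi^*)]$, which is touching, satisfies $h^@(x,x^*) = \minn_{u^* \in E^*}[{\varphi_S}^@(x,x^* - u^*) + {\varphi_T}^@(x,u^*)] \ge \bra{x}{x^*}$, and has $\big\{E \times E^*|h^@ = q_L\big\}$ closed, monotone and stably quasidense. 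It then remains only to identify this coincidence set with $G(S + T)$. For the inclusion $G(S+T) \subset \big\{E \times E^*|h^@ = q_L\big\}$: if $x^* = s^* + t^*$ with $(x,s^*) \in G(S)$ and $(x,t^*) \in G(T)$, then \eqref{PHIS3} gives ${\varphi_S}^@(x,s^*) = q_L(x,s^*) = \bra{x}{s^*}$ and ${\varphi_T}^@(x,t^*) = \bra{x}{t^*}$, so $h^@(x,x^*) \le \bra{x}{s^*} + \bra{x}{t^*} = \bra{x}{x^*} = q_L(x,x^*)$, and the lower bound $h^@ \ge q_L$ forces equality. For the reverse inclusion: if $h^@(x,x^*) = \bra{x}{x^*}$, use that the minimum in \eqref{TOUCHD3} is attained to get $u^* \in E^*$ with ${\varphi_S}^@(x,x^* - u^*) + {\varphi_T}^@(x,u^*) = \bra{x}{x^*}$; since ${\varphi_S}^@ \ge q_L$, ${\varphi_T}^@ \ge q_L$ and $q_L(x,x^* - u^*) + q_L(x,u^*) = \bra{x}{x^*}$, both terms must attain their lower bound, so by \eqref{PHIS3} $(x,x^* - u^*) \in G(S)$ and $(x,u^*) \in G(T)$, whence $(x,x^*) \in G(S + T)$. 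Thus $\big\{E \times E^*|h^@ = q_L\big\} = G(S + T)$, which is therefore closed, monotone and stably quasidense, establishing (d).

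The only genuinely delicate point lives inside \Lem{TOUCHDlem}: that the infimum defining $h^@$ is actually attained (the ``$\min$'' in \eqref{TOUCHD3}), since it is precisely this attainment that drives the hard half of the identification of the coincidence set with $G(S+T)$. Everything else reduces to bookkeeping with the properties \eqref{PHIS1}--\eqref{PHIS3} of $\varphi_S$, $\varphi_T$ together with the conclusions of \Lem{TOUCHDlem}; the heavy machinery --- Rockafellar's Fenchel duality theorem via \Lem{SZlem} and the touching-conjugate theorem \Thm{Kthm} --- has already been packaged there.
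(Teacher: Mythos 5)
Your proposal is correct and follows essentially the same route as the paper: (a)$\lr$(b)$\lr$(c) by the elementary inclusions via \eqref{PHIS2}, then (c)$\lr$(d) by applying \Lem{TOUCHDlem} to $f := \varphi_S$, $g := \varphi_T$ and identifying $\big\{E \times E^*|h^@ = q_L\big\}$ with $G(S+T)$ through the attained minimum in \eqref{TOUCHD3} together with \eqref{PHIS3}. The paper's proof of \eqref{STD9} is the same two-inclusion argument you give, merely in the opposite order.
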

\begin{proof}
It is immediate \big(using \eqref{PHIS2}\big) that (a)$\lr$(b)$\lr$(c).   Now suppose that (c) is satisfied.  From \eqref{PHIS1} and \eqref{PHIS4}, we can apply \Lem{TOUCHDlem} with $f := \varphi_S$ and $g := \varphi_T$.   So, in this case, \eqref{TOUCHD1} gives
\smallbreak
\centerline{$h(x,x^*) := \infn_{\xi^* \in E^*}\big[\varphi_S(x,x^* - \xi^*) + \varphi_T(x,\xi^*)\big]$.}
\smallbreak
\noindent
Thus \eqref{TOUCHD5} is satisfied and, for all $(x,x^*) \in E \times E^*$, \eqref{TOUCHD3} is satisfied.    We now prove that
\begin{equation}\label{STD9}
\big\{E \times E^*|h^@ = q_L\big\} = G(S + T).
\end{equation}
To this end, first let $(y,y^*) \in E \times E^*$ and $h^@(y,y^*) = q_L(y,y^*) = \bra{y}{y^*}$. \eqref{TOUCHD3} now gives $u^* \in E^*$ such that\quad ${\varphi_S}^@(y,y^* - u^*) + {\varphi_T}^@(y,u^*) = \bra{y}{y^*}$.\quad From \eqref{PHIS3},\quad ${\varphi_S}^@(y,y^* - u^*) \ge \bra{y}{y^* - u^*}$\quand${\varphi_T}^@(y,u^*) \ge \bra{y}{u^*}$.\quad   Since $\bra{y}{y^* - u^*} + \bra{y}{u^*} = \bra{y}{y^*}$, in fact ${\varphi_S}^@(y,y^* - u^*) = \bra{y}{y^* - u^*}$ and ${\varphi_T}^@(y,u^*) = \bra{y}{u^*}$, and another application of \eqref{PHIS3} implies that $(y,y^* - u^*) \in G(S)$ and $(y,u^*) \in G(T)$, from which $(y,y^*) \in G(S + T)$.   Suppose, conversely, that $(y,y^*) \in G(S + T)$.   Then there exists $u^* \in E^*$ such that $(y,y^* - u^*) \in G(S)$ and $(y,u^*) \in G(T)$.   From \eqref{TOUCHD3} and \eqref{PHIS3},
\begin{align*}
h^@(y,y^*) &\le {\varphi_S}^@(y,y^* - u^*) + {\varphi_T}^@(y,u^*)\\
&= \bra{y}{y^* - u^*} + \bra{y}{u^*} = \bra{y}{y^*} \le h^@(y,y^*),
\end{align*}
thus $h^@(y,y^*) = \bra{y}{y^*} = q_L(y,y^*)$.   This completes the proof of \eqref{STD9}, and (d) follows by combining \eqref{STD9} and \eqref{TOUCHD5}.
\end{proof}
%:   \Def{FITZdef} [The Fitzpatrick extension]
\begin{definition}[The Fitzpatrick extension]\label{FITZdef}
Let the notation be as in\break \Sec{EEsec} and $A$ be a closed, quasidense monotone subset of $E \times E^*$.   From\break \Thm{RLMAXthm}(a), $A$ is maximally monotone.   \Cor{PHICRITcor}\big((a)$\lr$(d)\big) and\break \Lem{EESLlem} imply that ${\Phi_A}^* \ge \qLt$ on $E^* \times E\dbs$.   We then write
\begin{equation}\label{FITZ1}
A^\F := \big\{E^* \times E\dbs|{\Phi_A}^* = \qLt\big\}.
\end{equation}
Let $(x,x^*) \in E \times E^*$.   Then, from \eqref{FAT} and \eqref{PHI4},
$(x,x^*) \in L^{-1}A^\F \iff {\Phi_A}^*L(x,x^*) = \qLt L(x,x^*) \iff {\Phi_A}^@(x,x^*) = \bra{x}{x^*} \iff (x,x^*) \in A$.   Thus
\begin{equation}\label{FITZ2}
L^{-1}A^\F = A,
\end{equation}
and so $A^\F$ is, in some sense, an extension of $A$ to $E^* \times E\dbs$. We will describe $A^\F$ as the {\em Fitzpatrick extension of $A$}.   It follows from this that $A^\F \ne \emptyset$, and so \Lem{Llem}(a) (with $B := E^* \times E\dbs$ and $f := {\Phi_A}^*$) implies that
\begin{equation}\label{AFMON}
A^\F\hbox{ is monotone.}
\end{equation}
In fact, as we shall see in \Thm{AFMAXthm}, $A^\F$ is always maximally monotone, but we do not need this result at the moment.   We digress briefly to the multifunction versions of the above concepts.   If $S\colon\ E \toto E^*$ is closed, monotone and quasidense then
\begin{equation}\label{FITZ3}
{\varphi_S}^* \ge \qLt\hbox{ on }E^* \times E\dbs.
\end{equation}
We define the multifunction $S^\F\colon\ E^* \toto E\dbs$ so that $G(S^\F) = G(S)^\F$.   Thus  $x\dbs \in S^\F(x^*)$ exactly when ${\varphi_S}^*(x^*,x\dbs) = \bra{x^*}{x\dbs}$.   It also follows from \eqref{FITZ2} that
\begin{equation}\label{FITZ4}
x^* \in S(x) \iff \wh x \in S^\F(x^*).
\end{equation}
Finally, $S^\F$ is monotone.   We will continue our development of the theory of the Fitzpatrick extension in \Sec{FITZCHARsec}.  
\end{definition}
By interchanging the order of the variables in the statement of \Lem{SZlem}, we can prove the following result in a similar fashion:
%:   \Lem{SZBISlem}
\begin{lemma}\label{SZBISlem}
Let $f,g \in \PCLSC(X \times Y)$.   For all $(x,y) \in X \times Y$, let
$$h(x,y) := \infn_{u \in X}\big[f(x - u,y) + g(u,y)\big] > -\infty.$$
Suppose that
$$\ts\bigcupn_{\lambda > 0}\lambda\big[\pi_2\,\dom\,f - \pi_2\,\dom\,g\big]\hbox{ is a closed subspace of }Y.$$
Then $h \in \PC(X \times Y)$ and, for all $(x^*,y^*) \in X^* \times Y^*$,
$$h^*(x^*,y^*) = \minn_{v^* \in Y^*}\big[f^*(x^*,y^* - v^*) + g^*(x^*,v^*)\big].$$
\end{lemma}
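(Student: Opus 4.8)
The plan is to deduce \Lem{SZBISlem} from \Lem{SZlem} by transposing the two Banach space factors. First I would set $\wt X := Y$ and $\wt Y := X$, and define $\wt f, \wt g\colon \wt X \times \wt Y \to \rbar$ by $\wt f(y,x) := f(x,y)$ and $\wt g(y,x) := g(x,y)$. Since the map $(x,y) \mapsto (y,x)$ is a linear homeomorphism of $X \times Y$ onto $\wt X \times \wt Y$, convexity, lower semicontinuity and properness are all preserved, so $\wt f, \wt g \in \PCLSC(\wt X \times \wt Y)$.

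Next I would verify the hypotheses of \Lem{SZlem} for the pair $\wt f, \wt g$. Putting $\wt h(y,x) := \infn_{u \in \wt Y}\big[\wt f(y, x - u) + \wt g(y, u)\big]$, one has $\wt h(y,x) = \infn_{u \in X}\big[f(x - u, y) + g(u, y)\big] = h(x,y)$, so the standing hypothesis ``$h > -\infty$ on $X \times Y$'' becomes ``$\wt h > -\infty$ on $\wt X \times \wt Y$''. Moreover $\dom\,\wt f = \big\{(y,x)\colon (x,y) \in \dom\,f\big\}$, hence $\pi_1\,\dom\,\wt f = \pi_2\,\dom\,f$, and likewise $\pi_1\,\dom\,\wt g = \pi_2\,\dom\,g$; therefore the constraint condition that $\bigcupn_{\lambda > 0}\lambda\big[\pi_2\,\dom\,f - \pi_2\,\dom\,g\big]$ be a closed subspace of $Y$ is precisely condition \eqref{SZ1} applied to $\wt f, \wt g$ (with ambient first factor $\wt X = Y$).

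It then remains to translate the conclusion of \Lem{SZlem} back through the transposition. The identity $\wt h(y,x) = h(x,y)$ immediately gives $h \in \PC(X \times Y)$. For the conjugates, note that $\wt X^* = Y^*$ and $\wt Y^* = X^*$, and a direct computation from the definition of the Fenchel conjugate gives $\wt f^*(y^*, x^*) = f^*(x^*, y^*)$, and similarly $\wt g^*(y^*, x^*) = g^*(x^*, y^*)$ and $\wt h^*(y^*, x^*) = h^*(x^*, y^*)$. Feeding these into \eqref{SZ8} for $\wt f, \wt g, \wt h$, namely $\wt h^*(y^*, x^*) = \minn_{u^* \in \wt X^*}\big[\wt f^*(y^* - u^*, x^*) + \wt g^*(u^*, x^*)\big]$, yields $h^*(x^*, y^*) = \minn_{v^* \in Y^*}\big[f^*(x^*, y^* - v^*) + g^*(x^*, v^*)\big]$ after renaming $u^*$ as $v^*$, which is the asserted formula.

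There is essentially no genuine obstacle here: the argument is a purely formal change of variables. The only point requiring a moment's care is to confirm that the transposed constraint qualification involves $\pi_2$ rather than $\pi_1$ — which is exactly what the identity $\pi_1\,\dom\,\wt f = \pi_2\,\dom\,f$ records — and that the hypothesis $h > -\infty$ is carried along unchanged under the transposition.
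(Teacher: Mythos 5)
Your proposal is correct and is essentially the paper's own argument: the paper obtains Lemma~\ref{SZBISlem} precisely ``by interchanging the order of the variables in the statement of Lemma~\ref{SZlem}'', which is the formal transposition $(x,y)\mapsto(y,x)$ you carry out, including the key bookkeeping identity $\pi_1\,\dom\,\wt f = \pi_2\,\dom\,f$ and the matching identities for the conjugates. No gaps.
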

%:   \Lem{TOUCHRlem}
\begin{lemma}\label{TOUCHRlem}
Let $f,g \in \PCLSCQ(E \times E^*)$ be touching.   For all $(x,x^*) \in E \times E^*$, let
\begin{equation}\label{TOUCHR1}
h(x,x^*) := \infn_{\xi \in E}\big[f(x - \xi,x^*) + g(\xi,x^*)\big].
\end{equation}
Suppose that
\begin{equation}\label{TOUCHR2}
\ts\bigcupn_{\lambda > 0}\lambda\big[\pi_2\dom\,f - \pi_2\dom\,g\big]\ \hbox{is a closed linear subspace of}\ E^*.\end{equation}
Then
\begin{equation}\label{TOUCHR4}
h \in \PCQ(E \times E^*),
\end{equation}
\begin{equation}\label{TOUCHR8}
h\hbox{ is touching},
\end{equation}
for all $(x,x^*) \in E \times E^*$,
\begin{equation}\label{TOUCHR3}
h^@(x,x^*) = \minn_{z\dbs \in E\dbs}\big[f^*(x^*,\wh x - z\dbs) + g^*(x^*,z\dbs)\big] \ge \bra{x}{x^*},
\end{equation}
and
\begin{equation}\label{TOUCHR7}
\big\{E \times E^*|h^@ = q_L\big\}\hbox{ is closed, monotone and stably quasidense.}
\end{equation}
\end{lemma}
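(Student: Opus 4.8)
The plan is to mirror the proof of \Lem{TOUCHDlem} almost verbatim, substituting \Lem{SZBISlem} for \Lem{SZlem}. First I would note that \eqref{TOUCHR2} forces $\pi_2\,\dom\,f \cap \pi_2\,\dom\,g \ne \emptyset$, so there is some $(y_0,z_0,x_0^*) \in E \times E \times E^*$ with $f(y_0,x_0^*) \in \RR$ and $g(z_0,x_0^*) \in \RR$. Since $f \ge q_L$ and $g \ge q_L$ on $E \times E^*$, for every $(x,x^*) \in E \times E^*$ we have $h(x,x^*) \ge \infn_{\xi \in E}\big[\bra{x - \xi}{x^*} + \bra{\xi}{x^*}\big] = \bra{x}{x^*} > -\infty$, while $h(y_0 + z_0,x_0^*) \le f(y_0,x_0^*) + g(z_0,x_0^*) < \infty$; this yields \eqref{TOUCHR4}. (As in \Rem{SZrem}, one should not expect $h \in \PCLSCQ(E \times E^*)$, only $h \in \PCQ(E \times E^*)$, which is all that is needed below.)

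Next, \Thm{CONJTOUCHthm} together with \Lem{EESLlem} give $f^*(x^*,x\dbs) \ge \bra{x^*}{x\dbs}$ and $g^*(x^*,x\dbs) \ge \bra{x^*}{x\dbs}$ for all $(x^*,x\dbs) \in E^* \times E\dbs$. I would then apply \Lem{SZBISlem} with $X := E$ and $Y := E^*$ --- its hypotheses are exactly \eqref{TOUCHR2} and the bound $h > -\infty$ just established --- to obtain, for all $(x^*,x\dbs) \in E^* \times E\dbs$,
\[
h^*(x^*,x\dbs) = \minn_{z\dbs \in E\dbs}\big[f^*(x^*,x\dbs - z\dbs) + g^*(x^*,z\dbs)\big] \ge \bra{x^*}{x\dbs} = s_L(x^*,x\dbs),
\]
the inequality using $\infn_{z\dbs \in E\dbs}\big[\bra{x^*}{x\dbs - z\dbs} + \bra{x^*}{z\dbs}\big] = \bra{x^*}{x\dbs}$. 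Hence $h^* \ge s_L$ on $B^*$, and \Thm{CONJTOUCHthm} applied to $h \in \PCQ(E \times E^*)$ gives \eqref{TOUCHR8}.

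Finally, since $h^@(x,x^*) = h^*\big(L(x,x^*)\big) = h^*(x^*,\wh x)$, putting $x\dbs = \wh x$ in the displayed identity produces the minimum representation asserted in \eqref{TOUCHR3}, while $h^@(x,x^*) = h^*(x^*,\wh x) \ge s_L(x^*,\wh x) = \bra{x^*}{\wh x} = \bra{x}{x^*}$ gives the inequality there; then \eqref{TOUCHR7} follows from \eqref{TOUCHR4}, \eqref{TOUCHR8} and \Thm{Kthm}(b). The one point demanding attention is bookkeeping: the infimal convolution in \eqref{TOUCHR1} acts on the first ($E$-valued) variable, and one must line this up correctly with the conjugate formula of \Lem{SZBISlem}, whose minimization runs over the dual $E\dbs$ of the \emph{second} variable. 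Beyond that, there is no genuine obstacle, the argument being the exact mirror image of the proof of \Lem{TOUCHDlem}.
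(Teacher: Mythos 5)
Your proposal is correct and follows essentially the same route as the paper's proof: establish \eqref{TOUCHR4} from $f,g \ge q_L$ and the nonempty intersection of the $\pi_2$--domains, invoke \Thm{CONJTOUCHthm} with \Lem{EESLlem} to get $f^*,g^* \ge \qLt$, apply \Lem{SZBISlem} (with $X := E$, $Y := E^*$) to obtain $h^* \ge s_L$ and hence \eqref{TOUCHR8}, then set $x\dbs = \wh x$ for \eqref{TOUCHR3} and finish with \Thm{Kthm}(b) for \eqref{TOUCHR7}. Your bookkeeping remark about the inf--convolution in the first variable dualizing to a minimum over $E\dbs$, the dual of the second variable, is exactly the point handled by \Lem{SZBISlem}, so there is nothing to add.
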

\begin{proof}
\eqref{TOUCHR2} implies that $\pi_2\,\dom\,g \cap \pi_2\,\dom\,f \ne \emptyset$, and so there exists\break $(x_0,y_0,z_0^*) \in E \times E \times E^*$ such that $f(x_0,z_0^*) \in \RR$ and $g(y_0,z_0^*) \in \RR$.   By hypothesis, $f \ge q_L$ and $g \ge q_L$ on $E \times E^*$.  Then, for all $(x,x^*) \in E \times E^*$,\break $h(x,x^*) \ge \infn_{\xi \in E}\big[\bra{x - \xi}{x^*} + \bra{\xi}{\xi^*}\big] = \bra{x}{x^*}$\quand $h(x_0 + y_0,z_0^*) \le\break f(x_0,z_0^*) + g(y_0,z_0^*) < \infty$, consequently \eqref{TOUCHR4} is satisfied.   From \Thm{CONJTOUCHthm} and \Lem{EESLlem}, for all $(x^*,x\dbs) \in E^* \times E\dbs$, 
\smallbreak
\centerline{${f}^*(x^*,x\dbs) \ge \bra{x^*}{x\dbs}\quand {g}^*(x^*,x\dbs) \ge \bra{x^*}{x\dbs}$.}
\smallbreak
\noindent
Thus \Lem{SZBISlem} (with $X := E$ and $Y := E^*$) and \Lem{EESLlem} imply that for all $(x^*,x\dbs) \in E^* \times E\dbs$,
\begin{equation}\label{TOUCHR6}
\left.
\begin{aligned}
&h^*(x^*,x\dbs) \\
&= \minn_{z\dbs \in E\dbs}\big[f^*(x^*,x\dbs - z\dbs) + g^*(x^*,z\dbs)\big]\cr
&\ge \infn_{z\dbs \in E\dbs}\big[\bra{x^*}{x\dbs - z\dbs} + \bra{x^*}{z\dbs}\big] = \bra{x^*}{x\dbs} = s_L(x^*,x\dbs).
\end{aligned}
\right\}
\end{equation}
Thus \eqref{TOUCHR8} follows from \eqref{TOUCHR4} and \Thm{CONJTOUCHthm}.
If $(x,x^*) \in E \times E^*$ then we obtain \eqref{TOUCHR3} by setting $x\dbs = \wh x$ in \eqref{TOUCHR6}, and \eqref{TOUCHR7} follows from
\eqref{TOUCHR4}, \eqref{TOUCHR8} and \Thm{Kthm}(b).
\end{proof}
If $S,T\colon\ E \toto E^*$ then the {\em parallel sum}, $S \parallel T\colon\ E \toto E^*$ is defined to be $(S^{-1} + T^{-1})^{-1}$.   \Thm{STRthm} below has applications to the classification of maximally monotone multifunctions.   See \cite[Theorems~14.2 and 18.4]{RLv7}.
%:   \Thm{STRthm} [Sum theorem with range constraints]
\begin{theorem}[Sum theorem with range constraints]\label{STRthm}
Let $S,T\colon\ E \toto E^*$ be closed, monotone and quasidense.   Then {\rm(a)$\lr$(b)$\lr$(c)$\lr$(d)$\lr$(e)}:
\smallbreak
\noindent
{\rm(a)}\enspace $R(S) \cap \intr\,R(T) \ne \emptyset$ or $\intr\,R(S) \cap R(T) \ne \emptyset$.
\smallbreak
\noindent
{\rm(b)}\enspace $\ts\bigcupn_{\lambda > 0}\lambda\big[R(S) - R(T)\big] = E^*$.
\smallbreak
\noindent
{\rm(c)}\enspace $\ts\bigcupn_{\lambda > 0}\lambda\big[\pi_2\,\dom\,\varphi_S - \pi_2\,\dom\,\varphi_T\big]$\quad is a closed subspace of $E^*$.
\smallbreak
\noindent
{\rm(d)}\enspace Define the multifunction $P\colon\ E \toto E^*$ by $P(y) := (S^\F + T^\F)^{-1}(\wh y)$.   Then $P$ is closed, monotone and stably quasidense.
\smallbreak
\noindent
{\rm(e)}\enspace If, further, $G(T)^\F = L\big(G(T)\big)$ then $S \parallel T$ is  closed, monotone and stably quasidense.
\end{theorem}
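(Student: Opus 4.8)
The plan is to follow the architecture of \Thm{STDthm}, replacing \Lem{TOUCHDlem} by its range-side counterpart \Lem{TOUCHRlem} (built on \Lem{SZBISlem} rather than \Lem{SZlem}) and inserting the bookkeeping for the Fitzpatrick extensions of \Def{FITZdef}. The implications (a)$\lr$(b)$\lr$(c) are routine. For (a)$\lr$(b): a point of $R(S)\cap\intr R(T)$, or of $\intr R(S)\cap R(T)$, makes $R(S)-R(T)$ contain a ball about $0$ in $E^*$, whence $\bigcupn_{\lambda>0}\lambda[R(S)-R(T)]=E^*$. For (b)$\lr$(c): by \eqref{PHIS2} we have $R(S)\subset\pi_2\dom\,\varphi_S$ and $R(T)\subset\pi_2\dom\,\varphi_T$, so $\bigcupn_{\lambda>0}\lambda[\pi_2\dom\,\varphi_S-\pi_2\dom\,\varphi_T]$ contains, hence equals, $E^*$, which is a closed subspace of itself.

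For (c)$\lr$(d) I would apply \Lem{TOUCHRlem} with $f:=\varphi_S$ and $g:=\varphi_T$; these belong to $\PCLSCQ(E\times E^*)$ and are touching by \eqref{PHIS1} and \eqref{PHIS4}, and (c) is exactly the hypothesis \eqref{TOUCHR2} of that lemma. The lemma delivers $h$ satisfying \eqref{TOUCHR4}, \eqref{TOUCHR8}, \eqref{TOUCHR3} and \eqref{TOUCHR7}; in particular $\big\{E\times E^*|h^@=q_L\big\}$ is closed, monotone and stably quasidense, so it only remains to identify it with $G(P)$. Fix $(y,y^*)$ and note $q_L(y,y^*)=\bra{y}{y^*}=\bra{y^*}{\wh y}$. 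By \eqref{TOUCHR3} (with $x\dbs:=\wh y$), $h^@(y,y^*)=\minn_{z\dbs\in E\dbs}\big[{\varphi_S}^*(y^*,\wh y-z\dbs)+{\varphi_T}^*(y^*,z\dbs)\big]\ge\bra{y^*}{\wh y}$, and by \eqref{FITZ3} each summand satisfies ${\varphi_S}^*(y^*,\wh y-z\dbs)\ge\bra{y^*}{\wh y-z\dbs}$ and ${\varphi_T}^*(y^*,z\dbs)\ge\bra{y^*}{z\dbs}$. If $h^@(y,y^*)=q_L(y,y^*)$, then at a minimizing $z\dbs$ both of these inequalities are forced to be equalities, which by the definition of the Fitzpatrick extension means $\wh y-z\dbs\in S^\F(y^*)$ and $z\dbs\in T^\F(y^*)$, so $\wh y\in(S^\F+T^\F)(y^*)$ and $(y,y^*)\in G(P)$. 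Conversely, writing $\wh y=a\dbs+b\dbs$ with $a\dbs\in S^\F(y^*)$, $b\dbs\in T^\F(y^*)$ and plugging $z\dbs:=b\dbs$ into the formula for $h^@(y,y^*)$ yields $h^@(y,y^*)\le\bra{y^*}{a\dbs}+\bra{y^*}{b\dbs}=\bra{y^*}{\wh y}\le h^@(y,y^*)$. Hence $\big\{E\times E^*|h^@=q_L\big\}=G(P)$, which gives (d).

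For (d)$\lr$(e), unwinding $L(x,x^*)=(x^*,\wh x)$ shows that the extra hypothesis $G(T)^\F=L\big(G(T)\big)$ is equivalent to $T^\F(y^*)=\big\{\wh t\colon t\in T^{-1}(y^*)\big\}$ for every $y^*$. Given this, I would check $G(P)=G(S\parallel T)$ directly. If $(y,y^*)\in G(P)$, decompose $\wh y=a\dbs+b\dbs$ with $a\dbs\in S^\F(y^*)$, $b\dbs\in T^\F(y^*)$; then $b\dbs=\wh t$ for some $t\in T^{-1}(y^*)$, so $a\dbs=\wh y-\wh t=\wh{y-t}$, and \eqref{FITZ4} with $x:=y-t$ gives $y^*\in S(y-t)$; therefore $y=(y-t)+t\in S^{-1}(y^*)+T^{-1}(y^*)$, i.e.\ $y^*\in(S\parallel T)(y)$. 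Conversely, if $y^*\in(S\parallel T)(y)$, pick $u$ with $y^*\in S(u)$ and $y-u\in T^{-1}(y^*)$; then $\wh u\in S^\F(y^*)$ by \eqref{FITZ4} and $\wh{y-u}\in T^\F(y^*)$ by the description of $T^\F$, so $\wh y=\wh u+\wh{y-u}\in(S^\F+T^\F)(y^*)$ and $(y,y^*)\in G(P)$. Thus $S\parallel T$ has the same graph as $P$, and (e) is immediate from (d).

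The conceptual content is light: the argument is essentially a transcription of \Thm{STDthm} with the range-side analogues in place of the domain-side ones. The step that needs the most care is (d)$\lr$(e), where one must see that the hypothesis $G(T)^\F=L\big(G(T)\big)$ is exactly what pins the auxiliary $E\dbs$-variable $z\dbs$ of \eqref{TOUCHR3} into the canonical image of $E$, so that the ``bidual-level'' parallel sum $P$ degenerates to the genuine parallel sum $S\parallel T$; keeping the embeddings $\wh{\cdot}$ in order throughout is the only real bookkeeping.
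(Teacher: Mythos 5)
Your proposal is correct and follows essentially the same route as the paper: (a)$\lr$(b)$\lr$(c) via \eqref{PHIS2}, then \Lem{TOUCHRlem} applied to $f:=\varphi_S$, $g:=\varphi_T$ together with \eqref{TOUCHR3}, \eqref{FITZ3} and \Def{FITZdef} to identify $\big\{E\times E^*|h^@=q_L\big\}$ with $G(P)$, and finally the hypothesis $G(T)^\F=L\big(G(T)\big)$ (equivalently $T^\F(y^*)=\{\wh t\colon t\in T^{-1}y^*\}$) with \eqref{FITZ4} to show $P=S\parallel T$. The only difference is cosmetic: the paper manipulates the pair $(y^*,z\dbs)\in G(T^\F)=L(G(T))$ directly rather than restating the hypothesis as a description of $T^\F(y^*)$.
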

\begin{proof}
It is immediate \big(using \eqref{PHIS2}\big) that (a)$\lr$(b)$\lr$(c).   Now suppose that (c) is satisfied.   From \eqref{PHIS1} and \eqref{PHIS4}, we can apply \Lem{TOUCHRlem} with $f := \varphi_S$ and $g := \varphi_T$.   So, in this case, \eqref{TOUCHR1} gives
\smallbreak
\centerline{$h(x,x^*) := \infn_{\xi \in E}\big[\varphi_S(x - \xi,x^*) + \varphi_T(\xi,x^*)\big]$.}
\smallbreak
\noindent
Thus \eqref{TOUCHR7} is satisfied and, for all $(x,x^*) \in E \times E^*$, \eqref{TOUCHR3} is satisfied.    Let $(y,y^*) \in E \times E^*$.   We prove that
\begin{equation}\label{STR8}
h^@(y,y^*) = q_L(y,y^*) \iff y^* \in P(y).
\end{equation}
To this end, first let $h^@(y,y^*) = q_L(y,y^*) = \bra{y}{y^*}$. \eqref{TOUCHR3} now gives $z\dbs \in E\dbs$ such that\quad ${\varphi_S}^*(y^*,\wh y - z\dbs) + {\varphi_T}^*(y^*,z\dbs) = \bra{y}{y^*}$.\quad We know from \eqref{FITZ3} that\quad ${\varphi_S}^*(y^*,\wh y - z\dbs) \ge \bra{y^*}{\wh y - z\dbs}$ and ${\varphi_T}^*(y^*,z\dbs) \ge \bra{y^*}{z\dbs}$.\quad   Since\break $\bra{y^*}{\wh y - z\dbs} + \bra{y^*}{z\dbs} = \bra{y}{y^*}$,\quad in fact\quad ${\varphi_S}^*(y^*,\wh y - z\dbs) = \bra{y^*}{\wh y - z\dbs}$ and ${\varphi_T}^*(y^*,z\dbs) = \bra{y^*}{z\dbs}$,\quad that is to say,\quad $\wh y - z\dbs \in S^\F(y^*)$ and $z\dbs \in T^\F(y^*)$, and so $y^* \in P(y)$.   Suppose, conversely, that $y^* \in P(y)$.   Then there exists $z\dbs \in T^\F(y^*)$ such that $\wh y - z\dbs \in S^\F(y^*)$    From \eqref{TOUCHR3},
\begin{align*}
h^@(y,y^*) &\le {\varphi_S}^*(y^*,\wh y - z\dbs) + {\varphi_T}^*(y^*,z\dbs)\\
&= \bra{y^*}{\wh y - z\dbs} + \bra{y^*}{z\dbs} = \bra{y}{y^*} \le h^@(y,y^*).
\end{align*}
Thus $h^@(y,y^*) = \bra{y}{y^*} = q_L(y,y^*)$.  This completes the proof of \eqref{STR8}, and (d) follows by combining \eqref{STR8} and \eqref{TOUCHR7}.
\smallbreak
Suppose, finally, that $G(T)^\F = L\big(G(T)\big)$.  We will prove $P = S \parallel T$, and (e) then follows from (d).   To this end, first let $y^* \in P(y)$.   Then we can choose $z\dbs \in T^\F(y^*)$ such that $\wh y - z\dbs \in S^\F(y^*)$. Now $(y^*,z\dbs) \in G\big(T^\F\big) = L\big(G(T)\big)$, and so there exists $(z,z^*) \in G(T)$ such that $(y^*,z\dbs) = (z^*,\wh z)$, from which $z\dbs = \wh z$, $z \in T^{-1}(z^*) = T^{-1}(y^*)$ and $\wh y - \wh z \in S^\F(y^*)$.   But then \eqref{FITZ4} implies that $y - z \in S^{-1}y^*$.   Thus $y = (y - z) + z \in S^{-1}y^* + T^{-1}y^*$, from which $y^* \in (S \parallel T)(y)$.   If, conversely, $y^* \in (S \parallel T)(y)$ then there exists $z \in E$ such that $y^* \in S(y - z)$ and  $y^* \in T(z)$.   From \eqref{FITZ4}, $\wh y - \wh z \in S^\F(y^*)$ and  $\wh z \in T^F(y^*)$.   Thus $\wh y \in (S^\F + T^\F)(y^*)$, and so $y^* \in  P(y)$.   This completes the proof of (e). 
\end{proof}
%:\Sec{LINsec}
\section{Closed $L$--positive linear subspaces}\label{LINsec}
%:   \Thm{SLDENSEthm}
In this section, we suppose that $(B,L)$ is a Banach SN space and  $A$ is a closed $L$--positive linear subspace of $B$.   The analysis in this section differs from that in \cite[Section~9]{POLAR} in that $(B,L)$ is {\em not} required to have a Banach SN dual.   We also point out the novel use of the $r_L$--density of subdifferentials to prove results on linear subspaces.  We define the function\quad $k\colon\ B \to \,\rbar$\quad by\quad $k := q_L + \I_A.$\quad We write $A^0$ for the linear subspace  $\big\{b^* \in B^*\colon\ \bra{A}{b^*} = \{0\}\big\}$ of $B^*$.   $A^0$ is the ``polar subspace of $A$''.   The significance of $A^0$ lies in the following lemma:  
%:   \Lem{QCSTARlem}
\begin{lemma}\label{QCSTARlem}
$k \in \PCLSCQ(B)$, $\big\{B|k = q_L\big\} = A$ and
$$\partial k(b) = \begin{cases}Lb + A^0&(\hbox{if}\ b \in A);\cr \emptyset&(\hbox{if}\  b \in B \setminus A).\end{cases}$$
\end{lemma}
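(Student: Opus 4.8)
The plan is to establish the three assertions in sequence, each reducing to a short expansion of $q_L$ on the subspace $A$ together with the consequence of $L$--positivity that $q_L(a) \ge 0$ for every $a \in A$ (which holds since $0 \in A$, $A$ is a linear subspace, and $q_L(a - 0) \ge 0$). First I would check that $k \in \PCLSCQ(B)$. Properness is clear since $0 \in A = \dom k$ and $q_L$ is real--valued; lower semicontinuity holds because $A$ is closed, so $\I_A$ is lower semicontinuous, while $q_L$ is continuous by \eqref{XOTAfive}; and $k \ge q_L$ on $B$ is trivial, since $k = q_L$ on $A$ and $k = \infty$ off $A$. The only point needing a computation is convexity: for $b_1,b_2 \in A$ and $\lambda \in \,]0,1[\,$, \eqref{SYM} gives the polarization identity $q_L\big(\lambda b_1 + (1 - \lambda)b_2\big) = \lambda q_L(b_1) + (1 - \lambda)q_L(b_2) - \lambda(1 - \lambda)q_L(b_1 - b_2)$, and since $b_1 - b_2 \in A$ the last term is $\le 0$; combined with the convexity of $\I_A$, this makes $k$ convex.

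The second assertion, $\big\{B|k = q_L\big\} = A$, is immediate from the definition of $k$: on $A$ one has $k = q_L$ (a real number), whereas on $B \setminus A$ one has $k = \infty > q_L$.

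For the subdifferential, the case $b \in B \setminus A$ is immediate: $k(b) = \infty$, so $\partial k(b) = \emptyset$ since $k$ is proper and convex. So fix $b \in A$ and $b^* \in B^*$. Because $A$ is a linear subspace containing $b$, as $c$ runs over $\dom k = A$ the vector $a := c - b$ runs over all of $A$; hence (the subgradient inequality being automatic when $c \notin A$) we have $b^* \in \partial k(b)$ exactly when $q_L(b + a) \ge q_L(b) + \bra{a}{b^*}$ for every $a \in A$. Expanding $q_L(b + a) = q_L(b) + \bra{a}{Lb} + q_L(a)$ turns this into
\[
q_L(a) \ge \bra{a}{b^* - Lb}\qquad\all\ a \in A.
\]
Replacing $a$ by $ta$ with $t \in \RR$ gives $t^2q_L(a) \ge t\bra{a}{b^* - Lb}$ for all real $t$, which (dividing by $|t|$ and letting $t \to 0$ from each side, using $q_L(a) \ge 0$) forces $\bra{a}{b^* - Lb} = 0$; conversely, if $b^* - Lb \in A^0$ then $q_L(a) \ge 0 = \bra{a}{b^* - Lb}$ trivially. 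Thus $b^* \in \partial k(b) \iff b^* - Lb \in A^0 \iff b^* \in Lb + A^0$, as required.

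There is no serious obstacle here. The only mildly delicate points are the scaling step, which is where one must exploit that $A$ is a genuine subspace (so that the first--order inequality $q_L(a) \ge \bra{a}{b^* - Lb}$ upgrades from a bound to the linear annihilation condition $b^* - Lb \in A^0$), and the bookkeeping behind the polarization identity used for convexity.
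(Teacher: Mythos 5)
Your proof is correct and follows essentially the same route as the paper: convexity of $k$ via the quadratic identity plus $L$--positivity of $A$, lower semicontinuity from the continuity of $q_L$ and closedness of $A$, and the subdifferential computed by translating within the subspace and then applying the scaling (quadratic) argument to upgrade $q_L(a) \ge \bra{a}{b^* - Lb}$ to $\bra{a}{b^* - Lb} = 0$. No gaps worth noting.
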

%:
\begin{proof}
$k$ is obviously proper.   Suppose that $b,c \in A$ and $\lambda \in \,]0,1[\,$.   Then
\begin{align*}
0 \le  \lambda(1 - \lambda)q_L(b - c) 
&= \lambda q_L(b) + (1 - \lambda)q_L(c) - q_L\big(\lambda b + (1 - \lambda)c\big)\\
&= \lambda k(b) + (1 - \lambda)k(c) - k\big(\lambda b + (1 - \lambda)c\big).
\end{align*}
This implies the convexity of $k$.  \big(See \cite[Lemma 19.7, pp.\ 80--81]{HBM}.\big)   Since $q_L$ is continuous and $A$ is closed, $k$ is lower semicontinuous.   It is now obvious that $k \in \PCLSCQ(B)$ and $\big\{B|k = q_L\big\} = A$. Since $\partial k(b) = \emptyset$ if \quad $b \in B \setminus \dom\,k$,\quad that is to say,\quad $b \in B \setminus A$,\quad it only remains to show that\quad $\partial k(b) = Lb + A^0$\quad if $b \in A$.\quad So suppose that $b \in A$.   Then, since $c := a - b$ runs through $A$ as $a$ runs through $A$ and then\quad $k(b) - k(c + b) = q_L(b) - q_L(c + b) = -\bra{c}{Lb} - q_L(c)$,
\begin{align*}
b^* \in \partial k(b)
&\iff \supn_{a \in A}\big[k(b) + \bra{a - b}{b^*} - k(a)\big] \le 0\\
&\iff \supn_{c \in A}\big[k(b) + \bra{c}{b^*} - k(c + b)\big] \le 0\\
&\iff \supn_{c \in A}\big[\bra{c}{b^* - Lb} - q_L(c)\big] \le 0.
\end{align*}
Since $q_L(c) \ge 0$ for all $c \in A$, this is trivially satisfied if $b^* \in Lb + A^0$.   On the other hand, if $b^* \in \partial k(b)$ then it follows from the above that, for all $c \in A$ and $\lambda \in \RR$,\quad $\lambda \bra{c}{b^* - Lb} - \lambda^2q_L(c) = \bra{\lambda c}{b^* - Lb} - q_L(\lambda c) \le 0$.\quad Thus, from the standard quadratic arguments, for all $c \in A$, $\bra{c}{b^* - Lb} = 0$. This is equivalent to the statement that $b^* \in Lb + A^0$.
\end{proof}
%:   \Lem{LINlem}
\begin{lemma}\label{LINlem}
Let $b \in B$.   Then $\inf r_L(A - b) \le \sup s_L\big(A^0\big)$.
\end{lemma}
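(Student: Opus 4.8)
The plan is to derive this from the quasidensity of the graph of the subdifferential of the function $k := q_L + \I_A$ studied in \Lem{QCSTARlem}. By that lemma, $k \in \PCLSCQ(B) \subset \PCLSC(B)$ and $G(\partial k) = \big\{(a, La + a^0)\colon\ a \in A,\ a^0 \in A^0\big\}$, so applying \Thm{RTRthm} with the underlying Banach space taken to be $B$ itself (so that the ambient Banach SN space is $B \times B^*$ in the sense of \Ex{EEex}) shows that $G(\partial k)$ is quasidense in $B \times B^*$.

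I would then instantiate the quasidensity condition \eqref{EE1} at the point $(b, Lb) \in B \times B^*$: given $\eps > 0$, there exist $a \in A$ and $a^0 \in A^0$ with
\[
\half\|a - b\|^2 + \half\|L(a - b) + a^0\|^2 + \bra{a - b}{L(a - b) + a^0} < \eps.
\]
Expanding the last pairing via $\bra{v}{Lv} = 2q_L(v)$ and using $\bra{a}{a^0} = 0$ (so that $\bra{a - b}{a^0} = -\bra{b}{a^0}$), the left-hand side equals $\half\|a - b\|^2 + \half\|L(a - b) + a^0\|^2 + 2q_L(a - b) - \bra{b}{a^0}$.

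The heart of the argument is a lower bound for $\half\|L(a - b) + a^0\|^2$ coming from the definition \eqref{SL1} of $s_L$. Feeding $c := a - b$ and $b^* := -a^0$ into \eqref{SL1}, and using that $s_L$ is quadratic so that $s_L(-a^0) = s_L(a^0)$ (see \Def{SLdef}), gives
\[
-\bra{a - b}{a^0} - q_L(a - b) - \half\|L(a - b) + a^0\|^2 \le s_L(a^0),
\]
which, since $\bra{a - b}{a^0} = -\bra{b}{a^0}$, rearranges to $\half\|L(a - b) + a^0\|^2 \ge \bra{b}{a^0} - q_L(a - b) - s_L(a^0)$. Substituting this into the simplified expression above makes the two occurrences of $\bra{b}{a^0}$ cancel and the $q_L(a - b)$ terms combine, leaving
\[
r_L(a - b) - s_L(a^0) \le \half\|a - b\|^2 + \half\|L(a - b) + a^0\|^2 + 2q_L(a - b) - \bra{b}{a^0} < \eps.
\]
Hence $\inf r_L(A - b) \le r_L(a - b) < s_L(a^0) + \eps \le \sup s_L(A^0) + \eps$, and letting $\eps \downarrow 0$ gives the claim.

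The one place requiring care is this last computation --- recognizing that $c = a - b$ and $b^* = -a^0$ are precisely the substitutions into \eqref{SL1} that make the cross terms cancel exactly. Everything else (the description of $G(\partial k)$ from \Lem{QCSTARlem}, the appeal to \Thm{RTRthm}, and the closing limit argument) is routine, and there is no degenerate case to treat separately: $s_L$ is real-valued on $B^*$, and the asserted inequality holds vacuously when $\sup s_L(A^0) = \infty$.
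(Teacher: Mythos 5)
Your proof is correct and is essentially the paper's own argument: both obtain the quasidensity of $G(\partial k)$ for $k := q_L + \I_A$ from \Lem{QCSTARlem} and \Thm{RTRthm}, test it at the point $(b,Lb) \in B \times B^*$, and then feed the resulting data into \eqref{SL1}, your substitution $c := a - b$, $b^* := -a^0$ (using that $s_L$ is even) being merely a sign-reversed form of the paper's $c := b - a$, $b^* := d^* - La \in A^0$. The only blemish is the closing aside that $s_L$ is real-valued on $B^*$ (false in general, e.g.\ $s_0(b^*) = \infty$ for $b^* \ne 0$), but that remark plays no role since your chain of inequalities holds trivially when $s_L(a^0) = \infty$.
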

\begin{proof}
Let $\eps > 0$.  Define $\wh L\colon\ B \times B^* \to B^* \times B\dbs$ by $\wh L(b,b^*) := \big(b^*,\wh{b}\big)$.   From \Lem{QCSTARlem} and \Thm{RTRthm}, there exist $a \in A$ and $d^* \in \partial k(a) = La + A^0$ such that
$$\half\|a - b\|^2 + \half\|d^* - Lb\|^2 + \bra{a - b}{d^* - Lb} = r_{\wh L}\big((a,d^*) - (b,Lb)\big) < \eps.$$
We write $c = b - a$ and $b^* = d^* - La \in A^0$.    Then $d^* - Lb = b^* - Lc$, from which $ \half\|c\|^2 + \half\|b^* - Lc\|^2 - \bra{c}{b^* - Lc} < \eps$, which can be rewritten
$$\half\|c\|^2 <  \bra{c}{b^*} - 2q_L(c) - \half\|b^* - Lc\|^2 + \eps.$$
It follows from \eqref{SL1} that
$$r_L(a - b) = \half\|c\|^2 + q_L(c) < \bra{c}{b^*} - q_L(c) - \half\|b^* - Lc\|^2 + \eps \le s_L(b^*) + \eps.$$
Since $b^* \in A^0$, this gives the required result.
\end{proof}
\Thm{LINthm} and \Cor{LINcor} will be used in \Thm{LINMONthm}.
%:   \Thm{LINthm}
\begin{theorem}\label{LINthm}
$A$ is $r_L$--dense in $B$ if, and only if, $\sup s_L\big(A^0\big) \le 0$.
\end{theorem}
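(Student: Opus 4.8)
The plan is to prove the two implications separately, and the ``if'' direction is immediate: assuming $\sup s_L\big(A^0\big) \le 0$, \Lem{LINlem} gives, for every $b \in B$, $\inf r_L(A - b) \le \sup s_L\big(A^0\big) \le 0$, which is exactly the defining condition (\Def{RLdef}) for $A$ to be $r_L$--dense in $B$. So all the content lies in the converse, and there the deep input --- the quasidensity of subdifferentials --- has already been packaged inside \Lem{LINlem}.

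For the ``only if'' direction, suppose $A$ is $r_L$--dense in $B$, and work with the function $k := q_L + \I_A$ introduced just before \Lem{QCSTARlem}. That lemma gives $k \in \PCLSCQ(B)$ and $\big\{B|k = q_L\big\} = A$; since this coincidence set $A$ is both $r_L$--dense and $L$--positive, \Cor{RLEQcor}\big((a)$\lr$(b)\big) shows that $k$ is touching, and then \Thm{CONJTOUCHthm} yields $k^* \ge s_L$ on $B^*$. It then remains only to evaluate $k^*$ on the polar subspace $A^0$: for $b^* \in A^0$ we have $\bra{a}{b^*} = 0$ for all $a \in A$, so
\[k^*(b^*) = \supn_{b \in B}\big[\bra{b}{b^*} - k(b)\big] = \supn_{a \in A}\big[\bra{a}{b^*} - q_L(a)\big] = \supn_{a \in A}\big[{-}q_L(a)\big].\]
Because $A$ is a linear subspace, $0 \in A$, so this supremum is at least $-q_L(0) = 0$; conversely, applying $L$--positivity to the pair $a,0 \in A$ gives $q_L(a) = q_L(a - 0) \ge 0$, so every term ${-}q_L(a)$ is $\le 0$. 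Hence $k^*(b^*) = 0$ for each $b^* \in A^0$, and therefore $s_L(b^*) \le k^*(b^*) = 0$ for each such $b^*$, i.e.\ $\sup s_L\big(A^0\big) \le 0$.

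I do not expect a serious obstacle. The only point needing a little care is the reduction through $k$: one must notice that $\big\{B|k = q_L\big\}$ is literally $A$ (so the hypothesis of $r_L$--density can be fed straight into \Cor{RLEQcor}), and that $k^*$ restricted to $A^0$ collapses to the constant $0$ --- it is $\Theta_A$ in the notation of \Def{THdef}, and on the polar subspace the linear term vanishes while the $L$--positivity of $A$ pins the quadratic term. Everything else is bookkeeping with \Lem{LINlem}, \Cor{RLEQcor} and \Thm{CONJTOUCHthm}.
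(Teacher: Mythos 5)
Your proposal is correct and follows essentially the same route as the paper: the ``if'' direction via \Lem{LINlem}, and the ``only if'' direction via $k := q_L + \I_A$, \Lem{QCSTARlem}, \Cor{RLEQcor}\big((a)$\lr$(b)\big) and \Thm{CONJTOUCHthm}, with the same evaluation $k^*(b^*) = \supn_A[-q_L] = 0$ on $A^0$ (the paper likewise uses $0 \in A$ and $L$--positivity to pin this value).
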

\begin{proof}
Suppose first that $A$ is $r_L$--dense in $B$.   From \Lem{QCSTARlem}, $k \in \PCLSCQ(B)$ and $\big\{B|k = q_L\big\} = A$.   From the $L$--positivity of $A$,  \Cor{RLEQcor}\big((a)$\lr$(b)\big) and \Thm{CONJTOUCHthm},  for all $b^* \in A^0$,
$$0 = \supn_A\big[- q_L\big] = \supn_A\big[b^* - q_L\big] = \supn_B\big[b^* - k\big] = k^*(b^*) \ge s_L(b^*).$$
Thus $\sup s_L\big(A^0\big) \le 0$.   If, conversely, $\sup s_L\big(A^0\big) \le 0$, then it is immediate from \Lem{LINlem} that $A$ is $r_L$--dense in $B$.
\end{proof}
%:   \Cor{LINcor}
\begin{corollary}\label{LINcor}
Let $c^* \in B^*$ and $\sup s_L\big(A^0 + \lin\{c^*\}\big) \le 0$.   Then $c^* \in A^0$.
\end{corollary}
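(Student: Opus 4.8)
The plan is to deduce this from \Thm{LINthm} by passing to a smaller subspace. Set
$$A' := \big\{a \in A\colon\ \bra{a}{c^*} = 0\big\} = A \cap \ker c^*.$$
Since $A'$ is the intersection of two closed linear subspaces and contains $0$, it is a nonempty closed linear subspace of $B$; and since $A' \subset A$ and $A$ is $L$--positive, $A'$ is $L$--positive. So $A'$ is itself a closed $L$--positive linear subspace of $B$, and \Thm{LINthm} applies verbatim with $A$ replaced by $A'$. Note that $c^* \in A^0$ is exactly the statement $A \subset \ker c^*$, i.e.\ $A' = A$, so it suffices to prove $A' = A$; and since $A$ is $L$--positive with $A \supset A'$, this will follow from \Lem{RLMAXlem} once we know that $A'$ is $r_L$--dense in $B$.

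The one substantive point is the polar identity $(A')^0 = A^0 + \lin\{c^*\}$. The inclusion ``$\supset$'' is immediate, since $A^0 \subset (A')^0$ and, by the definition of $A'$, $c^* \in (A')^0$. For ``$\subset$'', if $c^* \in A^0$ then $A' = A$ and there is nothing to prove; otherwise choose $a_0 \in A$ with $\bra{a_0}{c^*} = 1$. Given $b^* \in (A')^0$, every $a \in A$ decomposes as $a = \big(a - \bra{a}{c^*}a_0\big) + \bra{a}{c^*}a_0$, where the first summand lies in $A'$, hence $\bra{a - \bra{a}{c^*}a_0}{b^*} = 0$; rearranging gives $\bra{a}{b^* - \bra{a_0}{b^*}c^*} = 0$ for all $a \in A$, i.e.\ $b^* - \bra{a_0}{b^*}c^* \in A^0$, so $b^* \in A^0 + \lin\{c^*\}$. (Equivalently, one can note that $A^0$ is $w(B^*,B)$--closed, that adjoining the one--dimensional space $\lin\{c^*\}$ preserves this, and invoke the bipolar theorem.)

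With this identity, the hypothesis becomes $\sup s_L\big((A')^0\big) \le 0$, so \Thm{LINthm} (applied to $A'$) shows that $A'$ is $r_L$--dense in $B$, and then \Lem{RLMAXlem} shows that $A'$ is maximally $L$--positive. Since $A$ is $L$--positive and $A \supset A'$, maximality forces $A = A'$, i.e.\ $\bra{A}{c^*} = \{0\}$, which is precisely $c^* \in A^0$. I do not anticipate a real obstacle here; the only step requiring care is the polar identity $(A')^0 = A^0 + \lin\{c^*\}$, and even there only the inclusion ``$\subset$'' is needed, which is the routine finite--codimension Hahn--Banach argument sketched above.
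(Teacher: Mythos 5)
Your proof is correct and follows the same skeleton as the paper's: both pass to the subspace $A \cap Z$, where $Z := \big\{b \in B\colon \bra{b}{c^*} = 0\big\}$, establish the polar identity $(A \cap Z)^0 = A^0 + \lin\{c^*\}$ (of which, as you observe, only the inclusion ``$\subset$'' is actually used), and then apply \Thm{LINthm} and \Lem{RLMAXlem} to conclude that $A \cap Z$ is maximally $L$--positive, hence equal to $A$, which gives $c^* \in A^0$. The genuine difference is how the polar identity is obtained: the paper derives it from the Attouch--Brezis formula $\partial\big(\I_A + \I_Z\big)(0) = \partial\I_A(0) + \partial\I_Z(0)$, with the constraint qualification $A - Z = B$ justified by the fact that $Z$ has codimension one and $A \not\subset Z$, whereas you prove the needed inclusion $(A \cap Z)^0 \subset A^0 + \lin\{c^*\}$ directly, choosing $a_0 \in A$ with $\bra{a_0}{c^*} = 1$ and decomposing $a = \big(a - \bra{a}{c^*}a_0\big) + \bra{a}{c^*}a_0$; your computation is correct, and the trivial case $c^* \in A^0$ is handled properly. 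Your route buys elementariness --- the corollary becomes independent of the Attouch--Brezis theorem, which is in the spirit of \Rem{LINrem}, where the author already notes that this step can be replaced by a different lemma --- while the paper's route buys brevity and a uniform appeal to standard convex-analytic machinery rather than an ad hoc (if easy) linear-algebra projection.
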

\begin{proof}
Suppose that $c^* \not\in A^0$.   Let $Z = \big\{b \in B\colon\ \bra{b}{c^*} = 0\big\}$.   It is well known that $Z^0 =  \lin\{c^*\}$.   Since $c^* \not\in A^0$, there exists $a \in A \setminus Z$, and so the fact that $Z$ has codimension 1 implies that $A - Z = B$, that is to say $\dom\,\I_A - \dom\,\I_Z = B$.   From the Attouch--Brezis formula for the subdifferential of a sum,
\begin{align*}
(A \cap Z)^0
&= \partial\big(\I_{A \cap Z}\big)(0) = \partial\big(\I_A + \I_Z\big)(0)\\
&= \partial\I_A(0) +  \partial\I_Z(0) = A^0 + Z^0 = A^0 + \lin\{c^*\}.
\end{align*}
Thus, by assumption, $\sup s_L\big((A \cap Z)^0\big) \le 0$.   Since $A \cap Z \subset A$, $A \cap Z$ is also a closed $L$--positive linear subspace of $B$, thus \Thm{LINthm} and \Lem{RLMAXlem} imply that $A \cap Z$ is maximally $L$--positive.   Since $A \cap Z \subset A$, it follows from this that $A \cap Z = A$, and so $A \subset Z$, which gives $c^* \in Z^0 \subset A^0$, a contradiction.   
\end{proof}
%:   \Rem{LINrem}
\begin{remark}\label{LINrem}
One can use \cite[Lemma~2.2, p.\ 260--261]{POLAR} instead of the Attouch--Brezis formula in the proof of \Cor{LINcor}.
\end{remark}
%:\Sec{LINMONsec}
\section{Monotone linear relations and operators}\label{LINMONsec}
In this section, we suppose that $A$ is a linear subspace of $E \times E^*$ (commonly called a {\em linear relation}).  The {\em adjoint linear subspace}, $A^\T$, of $E\dbs \times E^*$, is defined by:
$$(y\dbs,y^*) \in A^\T \iff \hbox{for all}\ (s,s^*) \in A,\ \bra{s}{y^*} = \bra{s^*}{y\dbs}.$$
This definition goes back at least to Arens in \cite{ARENS}.   (We use the notation ``$A^\T$'' rather than the more usual ``$A^*$'' to avoid confusion with the dual space of $A$.)   It is clear that
$$(y\dbs,y^*) \in A^\T \iff (y^*,-y\dbs) \in A^0.$$  
\Thm{LINMONthm} below extends the result obtained by combining Bauschke,\break Borwein, Wang and Yao \cite[Theorem 3.1((iii)$\lr$(ii))]{BBWYLIN} and \cite[Proposition 3.1]{BBWYBB}, which in turn extends the result proved in the reflexive case by Brezis and Browder in \cite{BB}.   \Cor{LINMONcor} follows indirectly from Bauschke and Borwein, \cite[Theorem~4.1\big((iii)$\ifff$(v)\big), pp.\ 10--12]{BABO}.   \Thm{HTthm} provides more examples of maximally monotone linear operators that are not quasidense.   These examples can also be derived from the decomposition results of Bauschke and Borwein, \cite[Theorem~4.1\big((v)$\ifff$(vi)\big), pp.\ 10--12]{BABO}.
%:   \Thm{LINMONthm}
\begin{theorem}\label{LINMONthm}
Suppose that $A$ is a monotone closed linear subspace of $E \times E^*$.   Then $A$ is quasidense if, and only if, $A^\T$ is a monotone subspace of $E\dbs \times E^*$ if, and only if, $A^\T$ is a maximally monotone subspace of $E\dbs \times E^*$.
\end{theorem}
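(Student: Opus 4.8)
The plan is to route everything through \Thm{LINthm} and \Cor{LINcor}, translating the adjoint condition on $A^\T$ into a statement about the polar subspace $A^0 \subset B^* = E^* \times E\dbs$ by means of the identity $(y\dbs,y^*) \in A^\T \iff (y^*,-y\dbs) \in A^0$ recorded just above the theorem, together with \Lem{EESLlem}, which says $s_L(u^*,u\dbs) = \bra{u^*}{u\dbs}$ on $E^* \times E\dbs$. First I would note two elementary facts about a linear subspace $V$ of a paired space (to be applied to $V = A^\T \subset E\dbs \times E^*$, paired via $\bra\cdot\cdot\colon E^* \times E\dbs \to \RR$): (i) $V$ is monotone if and only if $\bra{w^*}{w\dbs} \ge 0$ for every $(w\dbs,w^*) \in V$, since the difference of two points of $V$ is again a point of $V$; and (ii) for $p \notin V$, the set $V \cup \{p\}$ is monotone if and only if $V + \lin\{p\}$ is monotone — for $v \in V$ and $t \ne 0$, the ``monotonicity quantity'' at $v + tp$ is $t^2$ times the one at $p - (-v/t)$, and $-v/t \in V$, so monotonicity of $V\cup\{p\}$ already controls the whole span.

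Next, given (i) and the identity, ``$A^\T$ is monotone'' unwinds (substituting $w^* = u^*$, $w\dbs = -u\dbs$) to ``$\bra{u^*}{u\dbs} \le 0$ for all $(u^*,u\dbs) \in A^0$'', which by \Lem{EESLlem} is literally the statement $\sup s_L(A^0) \le 0$. By \Thm{LINthm} this holds if and only if $A$ is $r_L$--dense in $B$, i.e.\ quasidense. This yields the first equivalence in the theorem — and, as a by-product, that $A^\T$ is automatically a linear subspace, being cut out by linear equations.

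For the remaining equivalence, ``maximally monotone $\Rightarrow$ monotone'' is trivial. Conversely, suppose $A^\T$ is monotone but not maximal and pick $(c\dbs,c^*) \in (E\dbs \times E^*)\setminus A^\T$ with $A^\T \cup \{(c\dbs,c^*)\}$ monotone; by (ii), $A^\T + \lin\{(c\dbs,c^*)\}$ is monotone. Running the translation of the previous paragraph through the image subspace $A^0 + \lin\{(c^*,-c\dbs)\}$ of $B^*$ gives $\sup s_L\big(A^0 + \lin\{(c^*,-c\dbs)\}\big) \le 0$, whereupon \Cor{LINcor}, applied with the vector $(c^*,-c\dbs) \in B^*$, forces $(c^*,-c\dbs) \in A^0$, that is, $(c\dbs,c^*) \in A^\T$ — a contradiction. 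Hence $A^\T$ is maximally monotone, which closes the chain $\text{quasidense} \iff \text{monotone} \iff \text{maximally monotone}$.

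I do not anticipate a genuine obstacle: the mathematical substance is entirely contained in \Thm{LINthm} and \Cor{LINcor} (hence, ultimately, in the quasidensity of subdifferentials, \Thm{RTRthm}). The only point requiring care is the sign bookkeeping under the correspondence $(y\dbs,y^*) \leftrightarrow (y^*,-y\dbs)$ and the verification that monotonicity of the linear subspaces $A^\T$ and $A^\T + \lin\{(c\dbs,c^*)\}$ is faithfully recorded by the pointwise sign conditions above; once facts (i) and (ii) are in hand, the argument is a short unwinding.
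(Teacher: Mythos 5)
Your proposal is correct and takes essentially the same route as the paper: the first equivalence is obtained exactly as there, from \Thm{LINthm} and \Lem{EESLlem} via the correspondence $(y\dbs,y^*)\leftrightarrow(y^*,-y\dbs)$, and the second equivalence from \Cor{LINcor}. The only difference is that you spell out the linear-span and sign-translation details (your facts (i) and (ii)) that the paper compresses into ``clearly equivalent'' and ``immediate from \Cor{LINcor}''.
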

\begin{proof}
From \Thm{LINthm} and \Lem{EESLlem}, $A$ is quasidense if, and only if, for all $(x^*,x\dbs) \in A^0$,  $\bra{x^*}{x\dbs} \le 0$, that is to say, 
$$\all\ (y\dbs,y^*) \in A^\T,\quad \bra{y^*}{-y\dbs} \le 0.$$
This is clearly equivalent to the statement that $A^\T$ is a monotone subspace of $E\dbs \times E^*$.   The second equivalence is immediate from \Cor{LINcor}. 
\end{proof}
%
%:   \Cor{LINMONcor}
\begin{corollary}\label{LINMONcor}
Suppose that $S\colon E \to E^*$ is a monotone linear operator.   Then $S$ is quasidense if, and only if, the adjoint linear operator $S^\T\colon E\dbs \to E^*$ is  monotone.\end{corollary}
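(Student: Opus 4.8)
The plan is to apply \Thm{LINMONthm} to the linear subspace $A := G(S)$ of $E \times E^*$, so that the whole issue reduces to translating the conditions appearing there into statements about $S$ and $S^\T$.

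First I would check that $A$ is a closed monotone linear subspace of $E \times E^*$. Linearity is clear, and monotonicity of $A$ is exactly the hypothesis that $S$ is monotone. For closedness, recall that a monotone operator is locally bounded at every interior point of its domain; since $S$ is everywhere defined, it is locally bounded at $0$, and a linear map that is locally bounded near a point is continuous. Hence $G(S)$ is closed; in particular $S^\T\colon E\dbs \to E^*$ is an everywhere--defined (bounded) operator, so the statement of the corollary makes sense.

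Next I would identify the adjoint subspace $A^\T$ with the graph of $S^\T$. By the definition of $A^\T$, a pair $(y\dbs,y^*) \in E\dbs \times E^*$ lies in $A^\T$ if and only if $\bra{s}{y^*} = \bra{Ss}{y\dbs}$ for all $s \in E$; since $\bra{Ss}{y\dbs} = \bra{s}{S^\T y\dbs}$ for every $s$, this is equivalent to $y^* = S^\T y\dbs$. Thus $A^\T = G(S^\T)$, and ``$A^\T$ is a monotone subspace of $E\dbs \times E^*$'' says precisely that $\bra{S^\T y\dbs}{y\dbs} \ge 0$ for all $y\dbs \in E\dbs$, i.e.\ that $S^\T$ is a monotone linear operator.

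Combining these observations with \Thm{LINMONthm} finishes the proof: $S$ is quasidense (by definition, $G(S) = A$ is quasidense) if and only if $A^\T$ is a monotone subspace of $E\dbs \times E^*$, which by the previous paragraph is if and only if $S^\T$ is monotone. (The further equivalence in \Thm{LINMONthm} with ``$A^\T$ maximally monotone'' is not needed here.) I do not expect any real obstacle: all the content sits in \Thm{LINMONthm}, and the only points needing a word of care are the automatic continuity of $S$ and the routine pairing computation $A^\T = G(S^\T)$.
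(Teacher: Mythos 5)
Your proposal is correct and takes essentially the same route as the paper, whose entire proof is that the result is immediate from \Thm{LINMONthm} together with the observation $G(S^\T) = G(S)^\T$ --- precisely your reduction. The additional details you supply (automatic continuity of the everywhere-defined monotone linear operator $S$, hence closedness of $G(S)$, and the pairing computation identifying $G(S)^\T$ with $G(S^\T)$) simply make explicit what the paper leaves implicit.
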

\begin{proof}
This is immediate from \Thm{LINMONthm} and the observation that\break $G(S^\T) = G(S)^\T$.
\end{proof}
%:   \Ex{HTex}[Heads and tails]
\begin{example}[Heads and tails]\label{HTex}
We defined the {\em tail operator}, $T$, in\break \Ex{TAILex}.  We define the {\em head operator}\quad $H\colon\ \ell_1 \mapsto \ell_\infty = E^*$\quad by\break $(Hx)_n = \ts\sum_{k \le n}x_k$.   Using the notation of \Ex{TAILex}, for all $x \in \ell_1$,
\begin{equation}\label{HT1}
\ts\bra{x}{Hx} = \sum_{n \ge 1}x_n\sum_{k \le n}x_k = \sum_{k \ge 1}x_k\sum_{n \ge k}x_n = \bra{x}{Tx}.
\end{equation}
If $\lambda,\mu \in \RR$, $\lambda + \mu \ge 0$ and $S := \lambda T + \mu H$ then,  from \eqref{HT1}, $S$ is monotone.    Since $S$ is linear and has full domain, $S$ is maximally monotone.   In \Thm{HTthm} below, we find for which values of $\lambda$ and $\mu$ (with $\lambda + \mu \ge 0$) $S$ is quasidense.
\end{example}
%:   \Thm{HTthm}[The theorem of the two quadrants]
\begin{theorem}[The theorem of the two quadrants]\label{HTthm}
Let $\lambda,\mu \in \RR$, $\lambda + \mu \ge 0$ and $S := \lambda T + \mu H$.   Then $S$ is quasidense if, and only if, $\lambda - \mu \le 0$.   In particular, let $G\colon\ \ell_1 \mapsto \ell_\infty = E^*$ be {\em Gossez's operator}, defined by $G := T - H$.   Then $G$ is not quasidense, but $-G$ is quasidense.
\end{theorem}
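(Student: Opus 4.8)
The natural approach is via the linear theory of \Secs{LINsec} and \ref{LINMONsec}. By \Ex{HTex}, $S = \lambda T + \mu H$ is a monotone linear operator of full domain, and since it is bounded its adjoint $S^\T$ is the ordinary Banach adjoint, a bounded everywhere-defined operator $S^\T\colon\ {\ell_\infty}^* = E\dbs \to \ell_\infty = E^*$. By \Cor{LINMONcor}, $S$ is quasidense if, and only if, $S^\T$ is monotone, which --- $G(S^\T)$ being a linear subspace --- means precisely that $\bra{S^\T y\dbs}{y\dbs} \ge 0$ for every $y\dbs \in {\ell_\infty}^*$. So the theorem reduces to computing the quadratic functional $y\dbs \mapsto \bra{S^\T y\dbs}{y\dbs}$ on ${\ell_\infty}^*$ and finding for which $\lambda,\mu$ it is nonnegative.

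For this I would use the standard splitting ${\ell_\infty}^* = \ell_1 \oplus {c_0}^\perp$, writing $y\dbs = u + \nu$ with $u \in \ell_1$ (the restriction of $y\dbs$ to $c_0$) and $\nu \in {c_0}^\perp$. Since $Ts \in c_0$ and $Hs - \bra{s}{e^*}e^* \in c_0$ for every $s \in \ell_1$, the singular part $\nu$ enters only through the single scalar $c := \nu(e^*)$; expanding $Ts = \sum_k s_k(e_1 + \cdots + e_k)$ and the analogous formula for $H$, a direct computation gives $T^\T y\dbs = Hu$ and $H^\T y\dbs = Tu + c\,e^*$, both viewed in $\ell_\infty$, so that $S^\T y\dbs = \lambda Hu + \mu Tu + \mu c\,e^*$.

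Writing $q := \bra{u}{e^*} = \sum_n u_n$ and $p := \sum_n u_n^2$, and using \eqref{HT1} together with the trivial reindexing identity $(H+T)x = x + \bra{x}{e^*}e^*$, one gets $\bra{u}{Hu} = \bra{u}{Tu} = \half(p + q^2)$; since $u$, $Tu$ and $Hu - qe^*$ all lie in $c_0$ (so $\nu$ kills them), evaluating $\bra{S^\T y\dbs}{y\dbs}$ term by term gives
$$\bra{S^\T y\dbs}{y\dbs} = \ts\frac{\lambda+\mu}{2}\big(p + q^2\big) + (\lambda+\mu)\,qc + \mu c^2.$$
Here $p \ge 0$ and $\frac{\lambda+\mu}{2} \ge 0$, so the term $\frac{\lambda+\mu}{2}p$ is harmless; moreover, for fixed $q$ it can be made arbitrarily small by spreading the mass of $u$ over many coordinates, while $c$ may be prescribed arbitrarily (Hahn--Banach extension of the functional on $c_0 + \RR e^*$ that vanishes on $c_0$ and is $1$ at $e^*$). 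Hence $\bra{S^\T y\dbs}{y\dbs} \ge 0$ for all $y\dbs$ if, and only if, the quadratic form $(q,c) \mapsto \frac{\lambda+\mu}{2}q^2 + (\lambda+\mu)\,qc + \mu c^2$ is positive semidefinite, i.e. $\mu \ge 0$ and $\frac{(\lambda+\mu)(\mu-\lambda)}{4} \ge 0$ (the coefficient of $q^2$ being $\ge 0$ by hypothesis). A brief case check --- separating $\lambda + \mu > 0$ from $\lambda + \mu = 0$ --- shows that, under the standing hypothesis $\lambda + \mu \ge 0$, this is equivalent to $\lambda - \mu \le 0$, which is the assertion. The statement about Gossez's operator is then the special case $(\lambda,\mu) = (1,-1)$, where $\lambda - \mu = 2 > 0$ so $G$ is not quasidense, and $(\lambda,\mu) = (-1,1)$, where $\lambda - \mu = -2 < 0$ so $-G$ is quasidense.

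The one genuinely delicate step is the adjoint computation and the quadratic form it produces: one must correctly handle the singular part of ${\ell_\infty}^*$ (it contributes solely through $\nu(e^*)$) and keep careful track of signs, since a single slip there reverses the final inequality.
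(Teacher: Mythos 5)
Your proposal is correct, and it is necessarily a different route from the paper's, because the paper does not actually prove \Thm{HTthm}: it simply defers to \cite[Theorem~10.7]{RLv7}. What you give is a self-contained argument using only the machinery already developed in \Secs{LINsec} and \ref{LINMONsec}, which is arguably the proof this section is set up for. I checked the computations: writing $y\dbs = u + \nu$ with $u \in \ell_1$, $\nu \in {c_0}^\perp$, $c := \nu(e^*)$, $q := \sum_n u_n$, $p := \sum_n u_n^2$, one has $(T^\T y\dbs)_n = y\dbs\big((1,\dots,1,0,0,\dots)\big) = (Hu)_n$ (the indicator of the first $n$ places lies in $c_0$, so $\nu$ does not see it) and $(H^\T y\dbs)_n = y\dbs(e^*) - y\dbs\big((1,\dots,1,0,0,\dots)\big) = (Tu)_n + c$; moreover $Tu \in c_0$ and $Hu - qe^* \in c_0$, so $\nu(Tu) = 0$ and $\nu(Hu) = qc$, and together with \eqref{HT1} and $(H+T)u = u + qe^*$ this yields exactly your identity $\bra{S^\T y\dbs}{y\dbs} = \frac{\lambda+\mu}{2}(p+q^2) + (\lambda+\mu)qc + \mu c^2$. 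The reduction to positive semidefiniteness of the $(q,c)$--form is also sound: $p$ can be made arbitrarily small for fixed $q$ by spreading, $c$ is free by Hahn--Banach, and the case split $\lambda+\mu>0$ versus $\lambda+\mu=0$ gives equivalence with $\lambda-\mu\le 0$, with the Gossez cases $(\lambda,\mu)=(1,-1)$ and $(-1,1)$ falling out as you say. Two points worth making explicit in a final write-up: \Cor{LINMONcor} rests on \Thm{LINMONthm}, which requires $G(S)$ to be a closed subspace --- automatic here since $S = \lambda T + \mu H$ is bounded with full domain (and monotone by \Ex{HTex}); and monotonicity of the subspace $G(S^\T) \subset E\dbs \times E^*$ is, as you use, exactly the condition $\bra{S^\T y\dbs}{y\dbs} \ge 0$ for all $y\dbs$, by the convention fixed in the proof of \Thm{LINMONthm}. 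Compared with an appeal to the external reference, your argument buys a transparent, purely computational criterion in terms of a $2\times 2$ quadratic form, at the modest cost of handling the singular part of ${\ell_\infty}^*$ --- which you do correctly, since it enters only through $\nu(e^*)$.
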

\begin{proof}
See \cite[Theorem~10.7]{RLv7}.    
\end{proof}
%:\Sec{BRsec}
\section{Negative alignment conditions}\label{BRsec}
The material in this section was initially motivated by a result proved for reflexive spaces by Torralba in \cite[Proposition 6.17]{TORR} and extended to maximally monotone sets of type (D) by Revalski--Th\'era in \cite[Corollary~3.8, p. 513]{RT}.   In \Thm{NIBRthm}, we shall give a criterion for a closed monotone set to be quasidense in terms of {\em negative alignment pairs}, which are defined below, though the main result of this section is \Thm{MFBRthm}.   \Thm{MFBRthm}(c) is a version of the Br{\o}ndsted--Rockafellar theorem for closed monotone quasidense sets.   See \cite[Section~8, pp.\ 274--280]{BR} for a more comprehensive discussion of the history of this kind of result.   In this section we shall give complete details of proofs only if they differ in some significant way from those in \cite{BR}.
%:   \Def{NAPAIRdef} 
\begin{definition}\label{NAPAIRdef}
Let $A \subset E \times E^*$ and $\rho,\ \sigma \ge 0$.   We say that $(\rho,\sigma)$ is a {\em negative alignment pair} for $A$ with respect to $(w,w^*)$ if there exists a sequence $\big\{(s_n,s_n^*)\big\}_{n \ge 1}$ of elements of $A$ such that
$$\lim_{n \to \infty}\|s_n - w\| = \rho,\quad
\lim_{n \to \infty}\|s_n^* - w^*\| =\sigma \quand
\lim_{n \to \infty}\bra{s_n - w}{s_n^* - w^*} = -\rho\sigma.$$
\end{definition}
%:   \Lem{MFBRlem}
\begin{lemma}\label{MFBRlem}
Let $A$ be s closed subset of $E \times E^*$, $(w,w^*) \in E \times E^*$, $\alpha,\beta > 0$, $\tau \ge 0$ and $(\alpha\tau,\beta\tau)$ be a negative alignment pair for $A$ with respect to $(w,w^*)$.
\smallbreak
\noindent
{\rm(a)}\enspace If $(w,w^*) \in E \times E^* \setminus A$ then $\tau > 0$.
\smallbreak
\noindent
{\rm(b)}\enspace If $\inf_{(s,s^*) \in A}\bra{s - w}{s^* - w^*} > -\alpha\beta$ 
then $\tau < 1$.
\end{lemma}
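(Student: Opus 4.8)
The plan is to read everything off the defining sequence of the negative alignment pair, using only the closedness of $A$ for part~(a) and a single passage to the limit in a scalar inequality for part~(b). By \Def{NAPAIRdef}, since $(\alpha\tau,\beta\tau)$ is a negative alignment pair for $A$ with respect to $(w,w^*)$, there is a sequence $\big\{(s_n,s_n^*)\big\}_{n \ge 1}$ of elements of $A$ such that $\|s_n - w\| \to \alpha\tau$, $\|s_n^* - w^*\| \to \beta\tau$ and $\bra{s_n - w}{s_n^* - w^*} \to -(\alpha\tau)(\beta\tau) = -\alpha\beta\tau^2$.

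For part~(a), I would argue contrapositively. Suppose $\tau = 0$. Then $\|s_n - w\| \to 0$ and $\|s_n^* - w^*\| \to 0$, so $(s_n,s_n^*) \to (w,w^*)$ in $E \times E^*$. Since each $(s_n,s_n^*)$ lies in $A$ and $A$ is closed, this forces $(w,w^*) \in A$. Hence, if $(w,w^*) \in E \times E^* \setminus A$, then $\tau \ne 0$; since $\tau \ge 0$, we get $\tau > 0$.

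For part~(b), put $I := \inf_{(s,s^*) \in A}\bra{s - w}{s^* - w^*}$. The hypothesis is $I > -\alpha\beta$; note $I \in \RR$ because $A$ is nonempty (it contains the $(s_n,s_n^*)$) and $-\alpha\beta$ is finite. Since each $(s_n,s_n^*) \in A$, we have $\bra{s_n - w}{s_n^* - w^*} \ge I$ for every $n$; letting $n \to \infty$ gives $-\alpha\beta\tau^2 \ge I > -\alpha\beta$. As $\alpha\beta > 0$, dividing the strict inequality $-\alpha\beta\tau^2 > -\alpha\beta$ by $-\alpha\beta$ (which reverses it) yields $\tau^2 < 1$, and since $\tau \ge 0$ we conclude $\tau < 1$.

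There is really no substantial obstacle here: the whole content sits in \Def{NAPAIRdef}, and each part collapses to a one-line limiting argument once the sequence is extracted. The only points requiring minor care are the sign bookkeeping in part~(b) (using $\alpha\beta > 0$ to reverse the inequality correctly) and the use of $\tau \ge 0$ to pass from $\tau \ne 0$ to $\tau > 0$ in~(a) and from $\tau^2 < 1$ to $\tau < 1$ in~(b).
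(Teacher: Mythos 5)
Your proposal is correct and is essentially the paper's own argument: extract the sequence from \Def{NAPAIRdef}, use closedness of $A$ to handle $\tau = 0$ in (a), and pass to the limit in $\bra{s_n - w}{s_n^* - w^*} \ge \inf_{(s,s^*)\in A}\bra{s - w}{s^* - w^*}$ to get $-\alpha\beta\tau^2 > -\alpha\beta$ and hence $\tau < 1$ in (b). The only difference is that you spell out the contrapositive and the sign bookkeeping which the paper leaves implicit.
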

\begin{proof}
From \Def{NAPAIRdef}, there exists a sequence $\big\{(s_n,s_n^*)\big\}_{n \ge 1}$ of elements of $A$ such that\quad
$\lim_{n \to \infty}\|s_n - w\| = \alpha\tau$,\quad
$\lim_{n \to \infty}\|s_n^* - w^*\| = \beta\tau$ \quand
$\lim_{n \to \infty}\bra{s_n - w}{s_n^* - w^*} = -\alpha\beta\tau^2$.
\par
(a)\enspace If $\tau = 0$ then, since $A$ is closed, $(w,w^*) \in A$.
\smallbreak
(b)\enspace Since
$-\alpha\beta\tau^2 = \limn_{n \to \infty}\bra{s_n - w}{s_n^* - w^*} \ge \infn_{(s,s^*) \in A}\bra{s - w}{s^* - w^*} > -\alpha\beta$,
it follows that $\tau < 1$.
\end{proof}
Our next result contains a uniqueness theorem for negative alignment pairs for the case when $A$ is monotone.   The proof can be found in \cite[Theorem~8.4(b), p.\ 276]{BR}.
%:   \Lem{UNIlem}
\begin{lemma}\label{UNIlem}
Let $A$ be a monotone subset of $E \times E^*$, $(w,w^*) \in E \times E^*$ and $\alpha,\beta > 0$.   Then there exists at most one value of $\tau \ge 0$ such that $(\alpha\tau,\beta\tau)$ is a negative alignment pair for $A$ with respect to $(w,w^*)$.
\end{lemma}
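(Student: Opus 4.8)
The plan is to argue by contradiction: suppose $0 \le \tau_1 < \tau_2$ and both $(\alpha\tau_1,\beta\tau_1)$ and $(\alpha\tau_2,\beta\tau_2)$ are negative alignment pairs for $A$ with respect to $(w,w^*)$. Pick sequences $\{(s_n,s_n^*)\}$ and $\{(t_n,t_n^*)\}$ in $A$ realizing the two pairs in the sense of \Def{NAPAIRdef}. Since $A$ is monotone, for every $n$ we have $\bra{s_n - t_n}{s_n^* - t_n^*} \ge 0$. The strategy is to expand this inner product after writing $s_n - t_n = (s_n - w) - (t_n - w)$ and $s_n^* - t_n^* = (s_n^* - w^*) - (t_n^* - w^*)$, obtaining four terms whose liminf/limsup we can control: two of them are the ``diagonal'' terms $\bra{s_n - w}{s_n^* - w^*} \to -\alpha\beta\tau_1^2$ and $\bra{t_n - w}{t_n^* - w^*} \to -\alpha\beta\tau_2^2$, and the two ``cross'' terms $\bra{s_n - w}{t_n^* - w^*}$ and $\bra{t_n - w}{s_n^* - w^*}$ are bounded below by $-\|s_n - w\|\,\|t_n^* - w^*\| \to -\alpha\tau_1 \cdot \beta\tau_2$ and $-\|t_n - w\|\,\|s_n^* - w^*\| \to -\alpha\tau_2\cdot\beta\tau_1$, respectively, using Cauchy--Schwarz in the duality pairing.

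Assembling these estimates, monotonicity gives
\[
0 \le \limsup_{n\to\infty}\bra{s_n - t_n}{s_n^* - t_n^*} \le -\alpha\beta\tau_1^2 - \alpha\beta\tau_2^2 + \alpha\beta\tau_1\tau_2 + \alpha\beta\tau_1\tau_2 = -\alpha\beta(\tau_1 - \tau_2)^2.
\]
Since $\alpha\beta > 0$, this forces $(\tau_1 - \tau_2)^2 \le 0$, hence $\tau_1 = \tau_2$, contradicting $\tau_1 < \tau_2$. (One must be a little careful about whether liminf or limsup is the right thing to take on each term; the diagonal terms actually converge, so there is no subtlety there, and for the cross terms one uses that $-\bra{s_n-w}{t_n^*-w^*} \le \|s_n-w\|\,\|t_n^*-w^*\|$, i.e. $\bra{s_n-w}{t_n^*-w^*} \ge -\|s_n-w\|\,\|t_n^*-w^*\|$, and the right side converges, so $\liminf \bra{s_n-w}{t_n^*-w^*} \ge -\alpha\beta\tau_1\tau_2$; combining a liminf of a sum of four terms splits correctly because all but the cross terms converge.)

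The main (really only) obstacle is bookkeeping with the liminf/limsup of the four-term expansion: one needs the two diagonal pairings to converge (which they do, by hypothesis) so that the liminf of the full four-term sum decomposes as the sum of the limits of the diagonal terms plus the liminf of each cross term, and then to bound each cross term below via Cauchy--Schwarz with a convergent right-hand side. No deep input is needed beyond monotonicity of $A$ and the Cauchy--Schwarz inequality $|\bra{x}{x^*}| \le \|x\|\,\|x^*\|$; the whole argument is the ``perfect square'' trick $-\alpha\beta(\tau_1-\tau_2)^2 \ge 0$. Since the paper says the proof can be found in \cite[Theorem~8.4(b), p.\ 276]{BR}, I would also simply cite that reference, but the self-contained argument above is short enough to include.
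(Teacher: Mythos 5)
Your argument is correct: monotonicity applied to the two realizing sequences, expanded through $(w,w^*)$ and with the cross terms bounded below by $-\|\cdot\|\,\|\cdot\|$, yields $0\le-\alpha\beta(\tau_1-\tau_2)^2$, which forces $\tau_1=\tau_2$; the limsup bookkeeping in your display is sound since the diagonal terms converge. The paper itself gives no proof here but defers to \cite[Theorem~8.4(b), p.\ 276]{BR}, and your self-contained argument is exactly the standard one behind that reference, so nothing further is needed.
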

We now give our main result on the existence of negative alignment pairs, and some simple consequences.  We refer the reader to \Rem{NIrem} for more discussion on some of the issues raised by these results.
%:   \Thm{MFBRthm}
\begin{theorem}\label{MFBRthm}
Let $A$ be closed, monotone, quasidense subset of $E \times E^*$, $(w,w^*) \in E \times E^*$ and
$\alpha,\beta > 0$.   Then:
\smallbreak
\noindent
{\rm(a)}\enspace There exists a unique value of $\tau \ge 0$ such that
$(\alpha\tau,\beta\tau)$ is a\break negative alignment pair for $A$ with
respect to $(w,w^*)$.
\smallbreak
\noindent
{\rm(b)}\enspace If $(w,w^*) \not\in A$ then there exists a sequence $\big\{(s_n,s_n^*)\big\}_{n \ge 1}$ of elements of $A$ such that, for all $n \ge 1$, $s_n \ne w$, $s_n^* \ne w^*$,
\begin{equation}\label{ONE2}
\lim_{n \to \infty}\frac{\|s_n - w\|}{\|s_n^* - w^*\|} = \frac\alpha\beta\quand
\lim_{n \to \infty}\frac{\bra{s_n - w}{s_n^* - w^*}}{\|s_n - w\|\|s_n^* - w^*\|} = -1.
\end{equation}
{\rm(c)}\enspace If $\inf_{(s,s^*) \in A}\bra{s - w}{s^* - w^*} > -\alpha\beta$ then there exists $(s,s^*) \in A$ such that $\|s - w\| < \alpha$ and $\|s^* - w^*\| < \beta$.     If, further, $(w,w^*) \not\in A$ then there exists a sequence $\big\{(s_n,s_n^*)\big\}_{n \ge 1}$ of elements of $A$ such that, for all $n \ge 1$, $s_n \ne w$, $s_n^* \ne w^*$, $\|s_n - w\| < \alpha$, $\|s_n^* - w^*\| < \beta$, and \eqref{ONE2} is satisfied.
\smallbreak
\noindent
{\rm(d)}\enspace $\overline{\pi_1A} = \overline{\pi_1\dom\,\Phi_A}$ and $\overline{\pi_2A} = \overline{\pi_2\dom\,\Phi_A}$. Consequently, the sets $\overline{\pi_1A}$ and $\overline{\pi_2A}$ are convex.
\end{theorem}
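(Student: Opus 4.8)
The plan is to prove part (a) directly and then read off (b), (c) as short consequences of (a) and \Lem{MFBRlem}, with (d) an independent elementary deduction from (c). In (a) the uniqueness of $\tau$ is exactly \Lem{UNIlem}, so only existence needs an argument. First I would absorb the parameters $\alpha,\beta$ into the deformation $\Delta(x,x^*):=(x/\alpha,x^*/\beta)$ of \Lem{STABlem}: since $A$ is closed, monotone and quasidense, that lemma makes $\Delta(A)$ stably quasidense, so applying the definition of stable quasidensity at the point $(w/\alpha,w^*/\beta)$ produces a constant $K\ge0$ and a sequence $(s_n,s_n^*)\in A$ with $\|s_n-w\|\le\alpha K$, $\|s_n^*-w^*\|\le\beta K$ and $\frac{\beta}{2\alpha}\|s_n-w\|^2+\frac{\alpha}{2\beta}\|s_n^*-w^*\|^2+\bra{s_n-w}{s_n^*-w^*}\to0$. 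Writing $p_n:=\|s_n-w\|$, $q_n:=\|s_n^*-w^*\|$ and $r_n:=\bra{s_n-w}{s_n^*-w^*}$, I would combine $r_n\ge-p_nq_n$ (Cauchy--Schwarz) with the AM--GM bound $p_nq_n\le\frac{\beta}{2\alpha}p_n^2+\frac{\alpha}{2\beta}q_n^2$ to get $0\le p_nq_n+r_n\to0$, and then use $\big(\sqrt{\beta/\alpha}\,p_n-\sqrt{\alpha/\beta}\,q_n\big)^2=2\big(\frac{\beta}{2\alpha}p_n^2+\frac{\alpha}{2\beta}q_n^2+r_n\big)-2(p_nq_n+r_n)\to0$ to conclude $p_n/\alpha-q_n/\beta\to0$. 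Since $\{p_n/\alpha\}\subset[0,K]$, I pass to a subsequence along which $p_n/\alpha\to\tau$ for some $\tau\ge0$; then $q_n/\beta\to\tau$ as well, so $p_n\to\alpha\tau$, $q_n\to\beta\tau$, $p_nq_n\to\alpha\beta\tau^2$ and $r_n=(p_nq_n+r_n)-p_nq_n\to-\alpha\beta\tau^2$. That subsequence exhibits $(\alpha\tau,\beta\tau)$ as a negative alignment pair for $A$ with respect to $(w,w^*)$, which gives existence and completes (a).

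For (b), when $(w,w^*)\notin A$, \Lem{MFBRlem}(a) forces the $\tau$ of (a) to satisfy $\tau>0$, so for the witnessing sequence $p_n\to\alpha\tau>0$ and $q_n\to\beta\tau>0$; after discarding finitely many terms, $s_n\ne w$ and $s_n^*\ne w^*$, and then $p_n/q_n\to\alpha/\beta$ and $r_n/(p_nq_n)\to-1$, which is \eqref{ONE2}. For (c), the hypothesis $\inf_{(s,s^*)\in A}\bra{s-w}{s^*-w^*}>-\alpha\beta$ together with \Lem{MFBRlem}(b) forces $\tau<1$, so $p_n\to\alpha\tau<\alpha$ and $q_n\to\beta\tau<\beta$, and any sufficiently late term of the sequence satisfies $\|s_n-w\|<\alpha$ and $\|s_n^*-w^*\|<\beta$; if in addition $(w,w^*)\notin A$, the same sequence is eventually strict as in (b) and satisfies \eqref{ONE2}, so a single sequence does everything.

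For (d), the inclusions $\overline{\pi_1A}\subset\overline{\pi_1\dom\,\Phi_A}$ and $\overline{\pi_2A}\subset\overline{\pi_2\dom\,\Phi_A}$ are immediate from \eqref{PHI7}, which gives $A\subset\dom\,\Phi_A$. For the reverse I would use the identity $\Phi_A(x,x^*)=\bra{x}{x^*}-\inf_{(s,s^*)\in A}\bra{s-x}{s^*-x^*}$, which follows from \eqref{PHI6} and $q_L(y,y^*)=\bra{y}{y^*}$, together with the contrapositive of the first assertion of (c): if $x\notin\overline{\pi_1A}$ then $\|s-x\|\ge d$ for some $d>0$ and all $(s,s^*)\in A$, so for every $\beta>0$ no $(s,s^*)\in A$ has $\|s-x\|<d$, and hence the contrapositive of (c), applied with $\alpha:=d$ and $w:=x$, gives $\inf_{(s,s^*)\in A}\bra{s-x}{s^*-x^*}\le-d\beta$ for every $x^*$; letting $\beta\to\infty$ makes this infimum $-\infty$, so $\Phi_A(x,x^*)=\infty$ for every $x^*$ and $x\notin\pi_1\dom\,\Phi_A$. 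Thus $\pi_1\dom\,\Phi_A\subset\overline{\pi_1A}$ and, on taking closures, the two sets coincide; the $\pi_2$ statement is proved the same way with $\beta:=d$ and $\alpha\to\infty$. Convexity of $\overline{\pi_1A}$ and $\overline{\pi_2A}$ then follows because $\Phi_A$ is convex, so $\dom\,\Phi_A$ is convex, each $\pi_i\dom\,\Phi_A$ is convex as a linear image, and the closure of a convex set is convex.

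The step I expect to be the main obstacle is the boundedness used in (a): quasidensity by itself only supplies a minimizing sequence, which need not be bounded, so extracting the scalar $\tau$ genuinely requires upgrading to \emph{stable} quasidensity via \Lem{STABlem} (equivalently \Cor{AUTOcor}), after which one must keep the two--parameter AM--GM bookkeeping straight to see that $p_n/\alpha$ and $q_n/\beta$ converge to the \emph{same} limit.
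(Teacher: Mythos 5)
Your proposal is correct and follows essentially the same route as the paper: Lemma~\ref{STABlem} supplies stable quasidensity of the deformed set, which yields a bounded minimizing sequence from which the common limit $\tau$ is extracted (your weighted AM--GM bookkeeping in the original coordinates is just the paper's computation on $\Delta(A)$ transported back), uniqueness is Lemma~\ref{UNIlem}, and (b), (c) follow from Lemma~\ref{MFBRlem}. Your part (d), argued via the contrapositive of (c) and \eqref{PHI6}, is the same argument as the paper's direct version, so no further comment is needed.
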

%Proof of \Thm{MFBRthm}(a,b,c)
\begin{proof}
Define $\Delta$ as in \Lem{STABlem} and let $(u,u^*) := \Delta(w,w^*)$.  From \Lem{STABlem}, $\Delta(A)$ is closed, monotone and stably quasidense, and so there exists a {\em bounded} sequence $\big\{(t_n,t_n^*)\big\}_{n \ge 1}$ of elements of $\Delta(A)$ such that
\begin{equation}\label{ONE1}
\left.\begin{aligned}
0 &= \limn_{n \to \infty}r_L(t_n - u,t_n^* - u^*)\\
&= \limn_{n \to \infty}\big(\half\|t_n - u\|^2 + \half\|t_n^* - u^*\|^2 + \bra{t_n - u}{t_n^* - u^*}\big)\\
&\ge \limsupn_{n \to \infty}\half\big(\|t_n - u\| - \|t_n^* - u^*\|\big)^2 \ge 0.
\end{aligned}\right\}
\end{equation}
Thus\quad $\limn_{n \to \infty}\big(\|t_n - u\| - \|t_n^* - u^*\|) = 0$.\quad   Since $\big\{\|(t_n - u\|\big\}_{n \ge 1}$ is bounded in $\RR$, passing to an appropriate subsequence, there exists $\tau \in \RR$ such that $\tau \ge 0$ and\quad $\lim_{n \to \infty}\|t_n - u\| = \tau$,\quad from which\quad $\lim_{n \to \infty}\|t_n^* - u^*\| = \tau$\quad also.   From \eqref{ONE1},\quad $\lim_{n \to \infty}\bra{t_n - u}{t_n^* - u^*} = -\half\tau^2 - \half\tau^2 = -\tau^2$.\quad  For all $n \ge 1$, let $(s_n,s_n^*) := (\alpha t_n,\beta t_n^*) \in A$.   Then,\quad since $(w,w^*) = (\alpha u,\beta u^*)$,\quad
$\lim_{n \to \infty}\|s_n - w\| = \alpha\tau$,\quad $\lim_{n \to \infty}\|s_n^* - w^*\| = \beta\tau$ \quand $\lim_{n \to \infty}\bra{s_n - w}{s_n^* - w^*} = -\alpha\beta\tau^2$.   Thus $(\alpha\tau,\beta\tau)$ is a negative alignment pair for $A$ with respect to $(w,w^*)$, and the ``uniqueness'' is immediate from \Lem{UNIlem}.   This completes the proof of (a).
\smallbreak
(b) follows from (a) and \Lem{MFBRlem}(a).
\smallbreak
(c) follows from (a) and \Lem{MFBRlem}(a,b).
\smallbreak
%Proof of \Thm{MFBRthm}(d)
(d)\enspace If $w \in \pi_1\dom\,\Phi_A$ then there exists $w^* \in E^*$ such that $\Phi_A(w,w^*) < \infty$ thus, from \eqref{PHI1},
\begin{align*}
\inf_{(s,s^*) \in A}\bra{s - w}{s^* - w^*}
&= \bra{w}{w^*} - \sup_{(s,s^*) \in A}\big[\bra{s}{w^*} + \bra{w}{s^*} - \bra{s}{s^*}\big]\\
&= \bra{w}{w^*} - \Phi_A(w,w^*) > -\infty.
\end{align*}
Let $n \ge 1$ and $\beta > -n\inf_{(s,s^*) \in A}\bra{s - w}{s^* - w^*}$.   (c) now gives $(s,s^*) \in A$ such that $\|s - w\| < 1/n$.   Consequently, $w \in \overline{\pi_1A}$.   Thus we have proved that $\pi_1\dom\,\Phi_A \subset \overline{\pi_1A}$.   On the other hand, from \Thm{RLMAXthm}(a) and \eqref{PHI2},\break $\pi_1A \subset \pi_1\dom\,\Phi_A$, and so $\overline{\pi_1A} = \overline{\pi_1\dom\,\Phi_A}$.   Similarly, $\overline{\pi_2A} = \overline{\pi_2\dom\,\Phi_A}$.   The convexity of the sets $\overline{\pi_1A}$ and $\overline{\pi_2A}$ now follows immediately.
\end{proof}
\begin{remark}\label{MFBRrem}
In multifunction terms, \Thm{MFBRthm}(d) implies that if\break $S\colon E \toto E^*$ is closed monotone and quasidense then $\overline{D(S)}$ and $\overline{R(S)}$ are both convex.
\end{remark}
%:   \Thm{NIBRthm} [A negative alignment criterion for quasidensity]
\begin{theorem}[A negative alignment criterion for quasidensity]\label{NIBRthm}
Let $A$ be a closed, monotone, subset of $E \times E^*$.   Then $A$ is quasidense if, and only if, for all $(w,w^*) \in E \times E^*$, there exists $\tau \ge 0$ such that $(\tau,\tau)$ is a negative alignment pair for $A$ with respect to $(w,w^*)$.\end{theorem}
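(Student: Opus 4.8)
The plan is to prove the two implications separately; given the machinery already developed, both are short.

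For the \emph{``only if''} direction, suppose $A$ is closed, monotone and quasidense. I would simply invoke \Thm{MFBRthm}(a) with $\alpha = \beta = 1$: it produces, for every $(w,w^*) \in E \times E^*$, a (unique) $\tau \ge 0$ such that $(\alpha\tau,\beta\tau) = (\tau,\tau)$ is a negative alignment pair for $A$ with respect to $(w,w^*)$. This is precisely the stated condition, so nothing further is needed.

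For the \emph{``if''} direction, I would argue directly from the definition \eqref{EE1} of quasidensity. Fix $(w,w^*) \in E \times E^*$, and let $\tau \ge 0$ together with a sequence $\big\{(s_n,s_n^*)\big\}_{n \ge 1}$ of elements of $A$ be as in \Def{NAPAIRdef}, witnessing that $(\tau,\tau)$ is a negative alignment pair for $A$ with respect to $(w,w^*)$; thus $\|s_n - w\| \to \tau$, $\|s_n^* - w^*\| \to \tau$ and $\bra{s_n - w}{s_n^* - w^*} \to -\tau^2$ as $n \to \infty$. Then
\[
r_L\big((s_n,s_n^*) - (w,w^*)\big) = \half\|s_n - w\|^2 + \half\|s_n^* - w^*\|^2 + \bra{s_n - w}{s_n^* - w^*} \longrightarrow \half\tau^2 + \half\tau^2 - \tau^2 = 0,
\]
so $\infn_{(s,s^*) \in A} r_L\big((s,s^*) - (w,w^*)\big) \le 0$. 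Since $(w,w^*)$ was arbitrary, this is exactly the assertion that $A$ is $r_L$--dense in $E \times E^*$, i.e., quasidense.

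I do not anticipate any real obstacle: the ``if'' direction is a one-line limit computation that uses neither closedness nor monotonicity of $A$, while the ``only if'' direction is a direct citation of \Thm{MFBRthm}(a), whose proof already carries the substantive content (through \Lem{STABlem}, stable quasidensity, and ultimately \Thm{CONJTOUCHthm}). The only point requiring a moment's care is purely notational: checking that $\alpha = \beta = 1$ is an admissible choice in \Thm{MFBRthm}(a) and that \Def{NAPAIRdef} with $(\rho,\sigma) = (\tau,\tau)$ is exactly what the $(\alpha\tau,\beta\tau)$ phrasing specializes to.
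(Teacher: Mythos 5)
Your proposal is correct and follows essentially the same route as the paper: the ``only if'' direction is exactly the paper's citation of \Thm{MFBRthm}(a) with $\alpha = \beta = 1$, and the ``if'' direction is the same limit computation showing $r_L\big((s_n,s_n^*) - (w,w^*)\big) \to 0$ (the paper likewise notes this half uses neither closedness nor monotonicity).
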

\begin{proof}
Suppose first that, for all $(w,w^*) \in E \times E^*$, there exists $\tau \ge 0$ such that $(\tau,\tau)$ is a negative alignment pair for $A$ with respect to $(w,w^*)$.   Then, for all $(w,w^*) \in E \times E^*$, \Def{NAPAIRdef}, provides a sequence $\big\{(s_n,s_n^*)\big\}_{n \ge 1}$ of elements of $A$ such that
$$\lim_{n \to \infty}\|s_n - w\| = \tau,\quad \lim_{n \to \infty}\|s_n^* - w^*\| = \tau \quand
\lim_{n \to \infty}\bra{s_n - w}{s_n^* - w^*} = -\tau^2.$$
But then
\begin{align*}
\lim_{n \to \infty}r_L\big((s_n,s_n^*) &- (w,w^*)\big)\\
&= \lim_{n \to \infty}\big[\half\|s_n - w\|^2 + \half\|s_n^* - w^*\|^2 + \bra{s_n - w}{s_n^* - w^*}\big]\\
&= \half\tau^2 + \half\tau^2 - \tau^2 = 0.
\end{align*}
So $A$ is quasidense.   (The above analysis does not use the assumption that $A$ is closed or monotone.)  The converse is immediate from \Thm{MFBRthm}(a) with $\alpha = \beta = 1$.
\end{proof}
%:   \Rem{\NIrem}
\begin{remark}\label{NIrem}
Using results on maximally monotone multifunctions of type (NI), \cite[Remark~11.4, p.\ 283]{BR} shows that the conclusion of \Thm{MFBRthm}(c) may, indeed, be true even if $A$ is {\em not} quasidense, and \cite[Example~11.5, p.\ 283--284]{BR} shows that if $A$ is not quasidense then the conclusion of \Thm{MFBRthm}(c) may fail.   In both these examples, $A$ is the graph of a single--valued, continuous linear map.   \Thm{MFBRthm}(d) implies that the closures of the domain and the range of a maximally monotone multifunction of type (NI) are both convex.   This result was first proved by Zagrodny in \cite{ZAGRODNY}, before it was known that such multifunctions of are always of type (ED).   See \Rem{ZAGrem}.
\end{remark}
%:   \Def{ANAdef}
\begin{definition}\label{ANAdef}
Let $A$ be a maximally monotone subset of $E \times E^*$.   Then $A$ is of {\em type (ANA)} if, whenever $(w,w^*) \in E \times E^* \setminus A$, there exists $(s,s^*) \in A$ such that $s \ne w$, $s^* \ne w^*$, and
$$\frac{\bra{s - w}{s^* - w^*}}{\|s - w\|\|s^* - w^*\|}\ \hbox{is as near
as we please to}\ {-}1.$$
(ANA) stands for ``almost negative alignment''.   See \cite[Section~9, pp. 280--281]{BR} for more discussion about this concept.
\end{definition}  
%:   \Thm{ANAthm}
\begin{theorem}\label{ANAthm}
Let $A$ be closed, monotone, quasidense subset of $E \times E^*$.   Then $A$ is of type (ANA).
\end{theorem}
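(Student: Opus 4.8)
The plan is to reduce the statement almost entirely to \Thm{MFBRthm}(b). First I would note that, by \Thm{RLMAXthm}(a), the set $A$ is maximally monotone, so \Def{ANAdef} does apply to $A$ and it is meaningful to ask whether $A$ is of type (ANA). It then only remains to verify the almost negative alignment condition at each point not in $A$.

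So let $(w,w^*) \in E \times E^* \setminus A$. Here I would invoke \Thm{MFBRthm}(b) with the particular choice $\alpha = \beta = 1$: since $A$ is closed, monotone and quasidense, this produces a sequence $\big\{(s_n,s_n^*)\big\}_{n \ge 1}$ of elements of $A$ such that, for every $n \ge 1$, $s_n \ne w$ and $s_n^* \ne w^*$, and such that
$$\lim_{n \to \infty}\frac{\bra{s_n - w}{s_n^* - w^*}}{\|s_n - w\|\|s_n^* - w^*\|} = -1.$$
(The other limit in \eqref{ONE2}, which with $\alpha=\beta=1$ reads $\lim_{n\to\infty}\|s_n - w\|/\|s_n^* - w^*\| = 1$, is not needed for the present purpose.)

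Finally, given any prescribed tolerance, I would choose $n$ large enough that $\bra{s_n - w}{s_n^* - w^*}/\big(\|s_n - w\|\|s_n^* - w^*\|\big)$ is within that tolerance of $-1$, and set $(s,s^*) := (s_n,s_n^*)$. This $(s,s^*)$ lies in $A$, satisfies $s \ne w$ and $s^* \ne w^*$, and realizes the required near alignment; since $(w,w^*)$ was an arbitrary point of $E \times E^* \setminus A$, this shows that $A$ is of type (ANA). I do not anticipate any genuine obstacle: all the analytic content — in particular the construction of the negative alignment pair, which rests on stable quasidensity and the deformation argument of \Lem{STABlem} — has already been carried out in the proof of \Thm{MFBRthm}, so what is left here is purely a matter of unwinding the definition in \Def{ANAdef}.
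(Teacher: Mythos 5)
Your proposal is correct and is exactly the paper's argument: the result is deduced directly from \Thm{MFBRthm}(b) (with maximality supplied by \Thm{RLMAXthm}(a)), the only difference being that you spell out the choice $\alpha=\beta=1$ and the unwinding of \Def{ANAdef}, which the paper leaves implicit.
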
  
\begin{proof}
This is immediate from \Thm{MFBRthm}(b).
\end{proof}
\begin{problem}
Does there exist a maximally monotone set that is not of type (ANA)?   The tail operator does {\em not} provide an example, because it was proved in Bauschke--Simons, \cite[Theorem 2.1, pp. 167--168]{BAUSIM} that {\em if $S\colon\ E \to E^*$ is monotone and linear then $S$ is maximally monotone of type (ANA)}.
\end{problem}
%:\Sec{FITZCHARsec}
\section{More on the Fitzpatrick extension}\label{FITZCHARsec}
In this section, we suppose that $A$ is a closed monotone, quasidense subset of $E \times E^*$, and we give some characterizations of $A^\F$ in terms of marker functions.
\smallbreak  
From \Thm{RLMAXthm}(a), $A$ is maximally monotone.   If $b^* \in B^* = E^* \times E\dbs$ then, from \eqref{FSTAR}, \eqref{TH1}, and \eqref{FAT} applied to the Banach SN space $\big(B^*,\LT\big)$, 
\begin{equation}\label{FITZCHAR1}
\left.\begin{aligned}
{\Phi_A}^*&(b^*)
= \supn_{b \in E \times E^*}\big[\bra{b}{b^*} - \Phi_A(b)\big]\\
&= \supn_{b \in E \times E^*}\big[\Bra{Lb}{\LT b^*} - \Theta_A(Lb)\big]\\
&\le \supn_{d^* \in E^* \times E\dbs}\big[\Bra{d^*}{\LT b^*} - \Theta_A(d^*)\big] = {\Theta_A}^*\big(\LT b^*\big) = {\Theta_A}^@(b^*),
\end{aligned}\right\}
\end{equation}
and, from \eqref{FAT}, \eqref{PHI4} and \eqref{TH3},
\begin{equation}\label{FITZCHAR2}
\left.\begin{aligned}
{\Phi_A}\sqs(b^*) &= \supn_{d^* \in B^*}\big[\Bra{d^*}{\LT b^*} - {\Phi_A}^*(d^*)\big]\\
&\ge \supn_{a \in A}\big[\Bra{La}{\LT b^*} - {\Phi_A}^*(La)\big]\\
&= \supn_A\big[b^* - {\Phi_A}^@\big]
= \supn_A\big[b^* - q_L\big] = \Theta_A(b^*).
\end{aligned}\right\}
\end{equation}
It would have been impossible to state \eqref{FITZCHAR1} and \eqref{FITZCHAR2} in \Sec{PHIsec}, since  $B^*$ did not have a Banach SN structure in that section.
\par
%:   \Lem{THPHTHlem}
\begin{lemma}\label{THPHTHlem}
We suppose that $g \in \PC(B^*)$ is a marker function for $A$.
Then $g \le {\Theta_A}^@$ and $g^@ \ge \Theta_A$ on $B^*$.
\end{lemma}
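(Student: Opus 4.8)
The plan is to obtain both inequalities as essentially immediate consequences of the marker‑function inequality \eqref{MARK3}, namely $g \le {\Phi_A}^*$ on $B^*$, combined with the two estimates \eqref{FITZCHAR1} and \eqref{FITZCHAR2} already established at the beginning of this section (which in turn rest on \eqref{TH1}). No new computation with $r_L$, $q_L$ or $s_L$ should be needed; the whole argument is a diagram chase among the functions $g$, ${\Phi_A}$, ${\Phi_A}^*$, ${\Phi_A}\sqs$, $\Theta_A$, ${\Theta_A}^@$ on the Banach SN space $\big(B^*,\LT\big)$.

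\textbf{First inequality, $g \le {\Theta_A}^@$.} Here I would simply chain two known inequalities. Since $g$ is a marker function for $A$, \eqref{MARK3} gives $g \le {\Phi_A}^*$ on $B^*$; and \eqref{FITZCHAR1} gives ${\Phi_A}^* \le {\Theta_A}^@$ on $B^*$. Composing these yields $g \le {\Theta_A}^@$ on $B^*$.

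\textbf{Second inequality, $g^@ \ge \Theta_A$.} The key point is that the operation $f \mapsto f^@$ associated to $\big(B^*,\LT\big)$ is order‑reversing: if $f_1 \le f_2$ on $B^*$ then $f_1^* \ge f_2^*$ on $B\dbs$, hence $f_1^@ = f_1^* \circ \LT \ge f_2^* \circ \LT = f_2^@$ on $B^*$. Applying this to $g \le {\Phi_A}^*$ gives $g^@ \ge ({\Phi_A}^*)^@ = {\Phi_A}\sqs$ on $B^*$, and then \eqref{FITZCHAR2} gives ${\Phi_A}\sqs \ge \Theta_A$ on $B^*$. Combining, $g^@ \ge \Theta_A$ on $B^*$, as required. (One can also note that $A$ is maximally monotone by \Thm{RLMAXthm}(a), which is the standing hypothesis under which \eqref{FITZCHAR1} and \eqref{FITZCHAR2} were derived.)

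\textbf{Main subtlety.} There is no substantial obstacle; the only thing to keep straight is the direction of the inequalities under conjugation. Because conjugation (and the $@$ operation) reverses order, the useful hypothesis is the \emph{upper} bound $g \le {\Phi_A}^*$ from \eqref{MARK3}, not the lower bound $g \ge \Theta_A$ from \eqref{GTH1}; indeed \eqref{GTH1} and the $w(B^*,B)$‑lower semicontinuity of $g$ play no role in this particular lemma, which uses only that $g \in \PC(B^*)$ satisfies \eqref{MARK3}.
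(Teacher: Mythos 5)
Your proposal is correct and follows essentially the same route as the paper: the first inequality is the chain \eqref{MARK3} plus \eqref{FITZCHAR1}, and the second is obtained by taking conjugates (the $@$ operation for $\big(B^*,\LT\big)$) in \eqref{MARK3} to get $g^@ \ge {\Phi_A}\sqs$ and then invoking \eqref{FITZCHAR2}. The paper only adds the side remark that the second assertion can alternatively be deduced from the first by conjugation, using ${\Theta_A}^{@@} = \Theta_A$.
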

\begin{proof}
The first assertion is immediate from \eqref{MARK3} and \eqref{FITZCHAR1}.   Taking conjugates in \eqref{MARK3}, $g^@ \ge {\Phi_A}\sqs$ on $B^*$, and so the second assertion follows from \eqref{FITZCHAR2}.   In fact, the second assertion can be deduced from the first by taking conjugates since ${\Theta_A}^{@@} = \Theta_A$.   \big(See \cite[Lemma 4.3(c), p.\ 237]{SSDMON}.\big)
\end{proof}
%:   \Thm{INVARthm}[Invariance of coincidence sets for marker functions]
\begin{theorem}[Invariance of coincidence sets for marker functions]\label{INVARthm}
Let\break $g \in \PC(B^*)$ be a marker function for $A$.   Then
\smallbreak
\centerline{$\big\{B^*|g^@ = q_\LT\big\} = \{B^* | g = \qLt\} = \big\{B^*|{\Theta_A} = q_\LT\big\}$.} 
\end{theorem}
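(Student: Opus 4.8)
Here is the plan. Throughout, recall that $\big(B^*,\LT\big)$ is itself a Banach SN space (\Ex{EEex}), with quadratic form $q_\LT$, so the whole machinery of \Secs{LPOSsec}--\ref{PHIsec} may be applied in $B^*$ as well; and that since $A$ is closed, monotone and quasidense, \Thm{RLMAXthm}(a) makes $A$ maximally monotone, so $\Phi_A$, $\Theta_A$, ${\Phi_A}^*$, ${\Phi_A}^@$ have all the properties recorded in \Sec{PHIsec} and \Sec{FITZCHARsec}. The idea is to squeeze $g$, $g^@$ and $\Theta_A$ between the two functions $q_\LT$ and ${\Theta_A}^@$ on $B^*$, and then show that $\{B^*|\Theta_A = q_\LT\}$ is precisely the set where this squeeze forces equality.

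First I would assemble the relevant bounds on $B^*$. From \Thm{MARKthm} and \Lem{EESLlem} (this is exactly the step used in the proof of \Lem{STABlem}), the quasidensity of $A$ gives $\Theta_A \ge s_L = q_\LT$ on $B^*$; combined with the marker inequality \eqref{GTH1} we get $q_\LT \le \Theta_A \le g$ on $B^*$. From \Lem{THPHTHlem} we also have $g \le {\Theta_A}^@$ and $g^@ \ge \Theta_A$ on $B^*$, and since $f \mapsto f^@$ is order-reversing, $g \ge \Theta_A$ gives $g^@ \le {\Theta_A}^@$ on $B^*$. So on $B^*$ we have two sandwiches: $q_\LT \le \Theta_A \le g \le {\Theta_A}^@$ and $\Theta_A \le g^@ \le {\Theta_A}^@$.

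The one substantive ingredient is that $\Theta_A \in \PCQ(B^*)$ relative to $\LT$: it is convex and $w(B^*,B)$--lower semicontinuous (being a supremum of $w(B^*,B)$--continuous affine functions), it is proper (by \eqref{TH1} and \eqref{PHI7} it equals $q_L$ at each point of $A$), and it is $\ge q_\LT$ by the previous paragraph. Hence \Lem{Llem}(c), applied in $\big(B^*,\LT\big)$ with $f := \Theta_A$, gives $\big\{B^*|\Theta_A = q_\LT\big\} \subseteq \big\{B^*|{\Theta_A}^@ = q_\LT\big\}$. Consequently, at any $b^*$ with $\Theta_A(b^*) = q_\LT(b^*)$ both sandwiches above run from $q_\LT(b^*)$ to $q_\LT(b^*)$, forcing $g(b^*) = q_\LT(b^*)$ and $g^@(b^*) = q_\LT(b^*)$; this proves $\big\{B^*|\Theta_A = q_\LT\big\} \subseteq \big\{B^*|g = q_\LT\big\}$ and $\big\{B^*|\Theta_A = q_\LT\big\} \subseteq \big\{B^*|g^@ = q_\LT\big\}$. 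For the reverse inclusions, if $g(b^*) = q_\LT(b^*)$ then $q_\LT(b^*) = g(b^*) \ge \Theta_A(b^*) \ge q_\LT(b^*)$, and if $g^@(b^*) = q_\LT(b^*)$ then $q_\LT(b^*) = g^@(b^*) \ge \Theta_A(b^*) \ge q_\LT(b^*)$; either way $\Theta_A(b^*) = q_\LT(b^*)$. This closes all six inclusions and proves the theorem.

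The main obstacle here is conceptual rather than computational: one has to recognize that \Lem{Llem}(c) should be transported into the dual SN space $\big(B^*,\LT\big)$, where $q_\LT$ plays exactly the role of $q_L$, and one has to extract the correct two-sided bounds on $g$ and $g^@$ from \eqref{GTH1} and \Lem{THPHTHlem}. Once these are in place, the quasidensity of $A$ is used only once, to get $\Theta_A \ge q_\LT$, and the rest is the elementary sandwich argument; in particular the $w(B^*,B)$--lower semicontinuity of $g$ is needed only insofar as it is part of the hypothesis that $g$ be a marker function (it already went into \Lem{THPHTHlem}).
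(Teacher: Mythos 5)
Your proposal is correct and follows essentially the same route as the paper: both squeeze $g$ and $g^@$ in the sandwich ${\Theta_A}^@ \ge g,\,g^@ \ge \Theta_A \ge q_\LT$ (via \Lem{THPHTHlem}, \eqref{GTH1}, \Thm{MARKthm} and \Lem{EESLlem}) and then apply \Lem{Llem}(c) to $f := \Theta_A$ in the dual Banach SN space $\big(B^*,\LT\big)$ to close the chain of inclusions. Your explicit verification that $\Theta_A \in \PCQ(B^*)$ is a point the paper leaves implicit, but otherwise the arguments coincide.
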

\begin{proof}
From \Lem{THPHTHlem}, \eqref{GTH1}, \Thm{MARKthm} and \Lem{EESLlem},
\smallbreak
\centerline{${\Theta_A}^@ \ge g \ge \Theta_A \ge q_\LT\ \on\ B^*$.}
\smallbreak
\noindent
Thus $\big\{B^*|{\Theta_A}^@ = q_\LT\big\} \subset \big\{B^*|g = q_\LT\big\} \subset \big\{B^*|{\Theta_A} = q_\LT\big\}$.   If we apply \Lem{Llem}(c) to $f := \Theta_A$, we see that $\big\{B^*|{\Theta_A} = q_\LT\big\} \subset \big\{B^*|{\Theta_A}^@ = q_\LT\big\}$, and so $\big\{B^*|g = q_\LT\big\} = \big\{B^*|{\Theta_A} = q_\LT\big\}$.   Taking conjugates in \eqref{GTH1}, and using \Lem{THPHTHlem},  \Thm{MARKthm} and \Lem{EESLlem} again,
%s
\smallbreak
\centerline{${\Theta_A}^@ \ge g^@ \ge \Theta_A \ge q_\LT\ \on\ B^*$.}
\smallbreak
\noindent
Using the same argument as above, $\big\{B^*|g^@ = q_\LT\big\} =  \big\{B^*|{\Theta_A} = q_\LT\big\}$.
\end{proof}
\begin{problem}
Is $g^@$ necessarily a marker function for $A$?
\end{problem}
%:   \Thm{FITZCHARthm}[More characterizations of the Fitzpatrick extension]
\begin{theorem}[More characterizations of the Fitzpatrick extension]\label{FITZCHARthm}We have: 
\par
\noindent
{\rm(a)}\enspace $\big\{B^*|{\Phi_A}\sqs = q_\LT\big\} = A^\F = \big\{B^*|\Theta_A = q_\LT\big\}$.
\par
\noindent
{\rm(b)}\enspace If $g$ is a marker function for $A$ then $\big\{B^*|g^@ = q_\LT\big\} = \{B^* | g = \qLt\} = A^\F$.
 
\end{theorem}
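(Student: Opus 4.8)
The plan is to derive both parts directly from \Thm{INVARthm}, \Lem{PHSTMAlem} and the definition \eqref{FITZ1} of $A^\F$; essentially no new computation is required, and the only point that needs care is matching the notation ${\Phi_A}\sqs$ with the operation $g \mapsto g^@$ that appears in \Thm{INVARthm}.

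For part (a), I would recall from \Thm{RLMAXthm}(a) that $A$ is maximally monotone, hence maximally $L$--positive, so \Lem{PHSTMAlem} applies: both ${\Phi_A}^*$ and $\Theta_A$ are marker functions for $A$. Then I would take $g := {\Phi_A}^*$ in \Thm{INVARthm}. By \Def{FATdef} applied to the Banach SN space $\big(B^*,\LT\big)$, the operation $g \mapsto g^@$ sends ${\Phi_A}^* \in \PC(B^*)$ to the function $b^* \mapsto ({\Phi_A}^*)^*(\LT b^*)$, which is precisely ${\Phi_A}\sqs$, as was spelt out in \eqref{FITZCHAR2}. Hence \Thm{INVARthm} yields
\[\big\{B^*|{\Phi_A}\sqs = \qLt\big\} = \big\{B^*|{\Phi_A}^* = \qLt\big\} = \big\{B^*|\Theta_A = \qLt\big\},\]
and the middle set is $A^\F$ by \eqref{FITZ1}. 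This proves (a).

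For part (b), let $g$ be any marker function for $A$. \Thm{INVARthm} gives
\[\big\{B^*|g^@ = \qLt\big\} = \big\{B^*|g = \qLt\big\} = \big\{B^*|\Theta_A = \qLt\big\},\]
and by part (a) the last set equals $A^\F$. This proves (b).

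The only step I would state explicitly rather than leave implicit is the identification ${\Phi_A}\sqs = ({\Phi_A}^*)^@$ with $^@$ taken in $\big(B^*,\LT\big)$ — the superscript $\sqs$ having been introduced in \Sec{PHIsec} before $B^*$ carried a Banach SN structure — but this identification is already recorded in \eqref{FITZCHAR2}. Everything else is a bookkeeping application of \Thm{INVARthm}, so I do not anticipate any genuine obstacle; the substantive work has already been done in \Thm{INVARthm} and \Lem{PHSTMAlem}.
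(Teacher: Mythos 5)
Your proposal is correct and follows essentially the same route as the paper: apply \Lem{PHSTMAlem} to see that ${\Phi_A}^*$ is a marker function, invoke \Thm{INVARthm} with $g := {\Phi_A}^*$ to get the three-set equality, identify the middle set with $A^\F$ via \eqref{FITZ1}, and deduce (b) from (a) together with \Thm{INVARthm}. The extra care you take in identifying ${\Phi_A}\sqs$ with $({\Phi_A}^*)^@$ computed in $\big(B^*,\LT\big)$ is exactly what \eqref{FITZCHAR2} records, so nothing is missing.
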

\begin{proof}
From \Lem{PHSTMAlem}, ${\Phi_A}^*$ is a marker function for $A$, and so  \Thm{INVARthm} gives $\big\{B^*|{\Phi_A}\sqs = q_\LT\big\} = \big\{B^*|{\Phi_A}^* = q_\LT\big\} = \big\{B^*|{\Theta_A} = q_\LT\big\}$.  (a) follows since, from \eqref{FITZ1}, the middle of these sets is actually $A^\F$.   (b) follows from (a) and \Thm{INVARthm} as stated.
\end{proof} 
%:   \Thm{AFMAXthm}
\begin{theorem}\label{AFMAXthm}
$A^\F$ is a maximally monotone subset of $E^* \times E\dbs$.
\end{theorem}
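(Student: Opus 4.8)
The plan is to prove maximality directly, bypassing $r_\LT$--density, by exploiting the identity $A^\F = \big\{B^*|\Theta_A = \qLt\big\}$ from \Thm{FITZCHARthm}(a). Write $B := E \times E^*$, so that $B^* = E^* \times E\dbs$ and $(B^*,\LT)$ is a Banach SN space in which ``$\LT$--positive'' means exactly ``monotone''. I will use freely that $A$ is maximally monotone (\Thm{RLMAXthm}(a)), that $A^\F$ is monotone (see \eqref{AFMON}), and that $L^{-1}A^\F = A$ (see \eqref{FITZ2}); none of these facts depends on the present theorem, and the last one gives $La \in A^\F$ for every $a \in A$, so in particular $A^\F \ne \emptyset$.

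First I would record two facts. Since $A$ is quasidense, i.e.\ $r_L$--dense in $B$, the ``in particular'' clause of \Thm{MARKthm} together with \Lem{EESLlem} gives $\Theta_A \ge s_L = \qLt$ on $B^*$; in view of \Thm{FITZCHARthm}(a) this means $\qLt \le \Theta_A$ on $B^*$ with equality precisely on $A^\F$. Secondly, for $a = (s,s^*) \in A$ we have $La = (s^*,\wh s) \in A^\F$ and, for every $(w^*,w\dbs) \in B^*$,
\[
\qLt\big((w^*,w\dbs) - La\big) = \bra{w^* - s^*}{w\dbs - \wh s} = \bra{w^*}{w\dbs} - \big[\bra{s}{w^*} + \bra{s^*}{w\dbs} - \bra{s}{s^*}\big],
\]
using $\bra{w^*}{\wh s} = \bra{s}{w^*}$ and $\bra{s^*}{\wh s} = \bra{s}{s^*}$; by \eqref{TH3} the bracketed expression has supremum $\Theta_A(w^*,w\dbs)$ as $a$ runs over $A$.

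Next I would carry out the maximality argument. Suppose $(w^*,w\dbs) \in B^*$ and $A^\F \cup \big\{(w^*,w\dbs)\big\}$ is $\LT$--positive. Applying $\LT$--positivity to the pairs $(w^*,w\dbs)$ and $La$ ($a \in A$), the displayed identity gives $\bra{s}{w^*} + \bra{s^*}{w\dbs} - \bra{s}{s^*} \le \bra{w^*}{w\dbs}$ for all $(s,s^*) \in A$; taking the supremum over $A$ yields $\Theta_A(w^*,w\dbs) \le \bra{w^*}{w\dbs} = \qLt(w^*,w\dbs)$. By the first recorded fact, $\Theta_A(w^*,w\dbs) = \qLt(w^*,w\dbs)$, hence $(w^*,w\dbs) \in \big\{B^*|\Theta_A = \qLt\big\} = A^\F$. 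Therefore $A^\F$ is maximally $\LT$--positive, that is, maximally monotone in $E^* \times E\dbs$.

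I do not expect a genuine obstacle here: once \Thm{FITZCHARthm}(a) is available the proof is a few lines. The only point to watch is the bidual bookkeeping in the displayed identity --- which is precisely why this statement sits in \Sec{FITZCHARsec}, where $B^*$ at last carries its own Banach SN structure --- and, at the level of logic, confirming that \Thm{FITZCHARthm}(a), \Thm{MARKthm}, \eqref{AFMON} and \eqref{FITZ2} are all proved without reference to the present theorem (they are). The conceptual step is to test a hypothetical strictly larger $\LT$--positive set against the copy $L(A)$ of $A$ that lies inside $A^\F$, rather than trying to verify $r_\LT$--density of $A^\F$ and invoke \Lem{RLMAXlem}.
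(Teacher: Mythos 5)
Your proposal is correct and is essentially the paper's own proof: the paper likewise takes a point monotonically related to $A^\F$, tests it only against $L(A) \subset A^\F$ (via \eqref{FITZ2}), uses \eqref{TH3} to convert $\inf\qLt\big(L(A)-b^*\big) \ge 0$ into $\Theta_A(b^*) \le \qLt(b^*)$, obtains the reverse inequality from \Thm{MARKthm} (with \Lem{EESLlem}), and concludes $b^* \in A^\F$ from \Thm{FITZCHARthm}(a). The only cosmetic difference is that you carry out the computation of $\qLt\big((w^*,w\dbs)-La\big)$ in coordinates on $E^* \times E\dbs$, whereas the paper writes the same identity abstractly in the Banach SN space $\big(B^*,\LT\big)$.
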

\begin{proof}
From \eqref{AFMON}, $A^\F$ is monotone.   Now suppose that $b^* \in E^* \times E\dbs$ and\quad $\inf\qLt\big(A^\F - b^*\big) \ge 0$.\quad   From \eqref{FITZ2}, $L(A) \subset A^F$, thus\quad $\inf\qLt\big(L(A) - b^*\big) \ge 0$,\quad and so, from \eqref{TH3} and the fact that, for all $a \in A$,\quad $\bra{a}{b^*} - q_L(a) - \qLt(b^*) = \Bra{La}{\LT b^*} - \qLt(La) - \qLt(b^*) = -\qLt(La - b^*)$,
$$\Theta_A(b^*) -  \qLt(b^*) = \supn_A\big[b^* - q_L - \qLt(b^*)\big] = -\inf\qLt\big(L(A) - b^*\big) \le 0.$$
Thus, from \Thm{MARKthm}, $\Theta_A(b^*) =  \qLt(b^*)$, and \Thm{FITZCHARthm}(a) implies that $b^* \in A^\F$.   This gives the desired result.  
\end{proof}
%:   \Rem{SQrem}
\begin{remark}\label{SQrem}
\Thm{FITZCHARthm}(a) implies that $(x^*,x\dbs) \in A^F$ exactly when $(x\dbs,x^*)$ is in the Gossez extension of $A$ \big(see \cite[Lemma~2.1,p. 275]{GOSSEZ}\big), which is known to be maximally monotone, so \Thm{AFMAXthm}  is to be expected.
\end{remark}
\begin{problem}
Is $A^\F$ necessarily an $\rLt$--dense subset of $E^* \times E\dbs$? 
\end{problem}
%:\Sec{ZAGsec}
\section{On a result of Zagrodny}\label{ZAGsec}
We now give a generalization to Banach SN spaces of an inequality for\break monotone multifunctions proved by Zagrodny.   This generalization appears in \Thm{ZAGthm}; and in \Thm{ZAGEEthm}, we see how this result appears when applied to monotone multifunctions.   There is a discussion of Zagrodny's original result in \Rem{ZAGrem}.   The analysis in this section does not depend on any of the results in this paper after \Sec{LPOSsec} other than \Sec{EEsec}.   So let $(B,L)$ be a Banach SN space.   Then
\begin{equation}\label{XOTAsix}
\All\ d,e \in B,\ \|e\| \le \sqrt{2r_L(e) + 2r_L(d) - 2q_L(d - e)} + \|d\|.
\end{equation}
To see this, it suffices to observe that
\begin{align*}r_L(e) &+ r_L(d) - q_L(d - e) = r_L(e) + r_L(d) - q_L(e) - q_L(d) + \bra{d}{Le}\\
&= \half\|e\|^2 + \half\|d\|^2 + \bra{d}{Le} \ge \half\|e\|^2 + \half\|d\|^2 - \|d\|\|e\| = \half\big(\|e\| - \|d\|\big)^2.
\end{align*} 
%:   \Lem{ZAGlem}
\begin{lemma}\label{ZAGlem}
Let $A_0$ be an $L$--positive subset of $B$ and $e,d\in A_0$. Then
\smallbreak
\centerline{$\|e\| \le \sqrt{2r_L(e)} + \sqrt{2}\|d\| + \|d\| \le \sqrt{2r_L(e)} + \frac52\|d\|$.}
\end{lemma}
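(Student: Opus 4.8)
The statement to prove is Lemma~\ref{ZAGlem}: for $A_0$ an $L$--positive subset of $B$ and $e, d \in A_0$, we have $\|e\| \le \sqrt{2r_L(e)} + \sqrt 2\|d\| + \|d\| \le \sqrt{2r_L(e)} + \frac52\|d\|$. The second inequality is trivial since $\sqrt 2 + 1 \le \frac52$, so the whole content is in the first inequality. The natural starting point is the inequality \eqref{XOTAsix} proved just above the lemma: for all $d,e \in B$,
$$\|e\| \le \sqrt{2r_L(e) + 2r_L(d) - 2q_L(d - e)} + \|d\|.$$
So the plan is to bound the quantity under the square root, $2r_L(e) + 2r_L(d) - 2q_L(d-e)$, from above by something whose square root splits into $\sqrt{2r_L(e)} + \sqrt2\|d\|$.

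First I would use $L$--positivity: since $e, d \in A_0$, we have $q_L(d - e) \ge 0$, so $-2q_L(d-e) \le 0$ and hence
$$2r_L(e) + 2r_L(d) - 2q_L(d-e) \le 2r_L(e) + 2r_L(d).$$
Next, from \eqref{RLPOS} we have $r_L \le \|\cdot\|^2$ on $B$, so $2r_L(d) \le 2\|d\|^2$. Therefore
$$2r_L(e) + 2r_L(d) - 2q_L(d-e) \le 2r_L(e) + 2\|d\|^2.$$
Taking square roots and using the elementary subadditivity of $\sqrt{\cdot}$ (i.e. $\sqrt{s+t} \le \sqrt s + \sqrt t$ for $s,t \ge 0$) with $s = 2r_L(e)$ and $t = 2\|d\|^2$, we get
$$\sqrt{2r_L(e) + 2r_L(d) - 2q_L(d-e)} \le \sqrt{2r_L(e)} + \sqrt{2\|d\|^2} = \sqrt{2r_L(e)} + \sqrt2\,\|d\|.$$
Substituting this into \eqref{XOTAsix} yields $\|e\| \le \sqrt{2r_L(e)} + \sqrt2\,\|d\| + \|d\|$, which is the first claimed inequality. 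Finally, since $\sqrt2 + 1 = 2.414\ldots \le 2.5 = \frac52$, we conclude $\|e\| \le \sqrt{2r_L(e)} + \frac52\|d\|$.

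There is essentially no obstacle here: the lemma is a routine consequence of \eqref{XOTAsix}, the pointwise bound $0 \le r_L \le \|\cdot\|^2$ from \eqref{RLPOS}, the $L$--positivity hypothesis $q_L(d-e) \ge 0$, and the subadditivity of the square root. The only place to be slightly careful is checking that all the quantities under square roots are nonnegative so that $\sqrt{s+t}\le\sqrt s+\sqrt t$ applies — but $r_L \ge 0$ by \eqref{RLPOS} and $\|d\|^2 \ge 0$ trivially, so this is immediate.
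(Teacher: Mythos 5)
Your proof is correct and follows essentially the same route as the paper: apply \eqref{XOTAsix}, drop $-2q_L(d-e)$ by $L$--positivity, and split the square root, using $r_L \le \|\cdot\|^2$ from \eqref{RLPOS} to replace $\sqrt{2r_L(d)}$ by $\sqrt2\,\|d\|$ (the paper merely applies these last two steps in the opposite order). Nothing is missing.
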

\begin{proof}
Since $A_0$ is $L$--positive,\quad $q_L(d - e) \ge 0$,\quad and so \eqref{XOTAsix} and \eqref{RLPOS} imply that 
\begin{align*}
\|e\| &\le \sqrt{2r_L(e) + 2r_L(d)} + \|d\|\\
&\le \sqrt{2r_L(e)} + \sqrt{2r_L(d)} + \|d\| \le \sqrt{2r_L(e)} + \sqrt{2}\|d\| + \|d\|.
\end{align*}
This gives the required result.
\end{proof}
%:   \Thm{ZAGthm}
\begin{theorem}\label{ZAGthm}
Let $A$ be an $L$--positive subset of $B$, $a \in A$ and $b \in B$.   Then\quad $\|a\| \le \sqrt{2r_L(a - b)} + {\ts\frac52}\dist(b,A) + \|b\|$.
\end{theorem}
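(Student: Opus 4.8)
The plan is to deduce the inequality from \Lem{ZAGlem} by a translation that moves the point $b$ to the origin. First I would observe that the translate $A - b := \{x - b\colon x \in A\}$ is again a nonempty $L$--positive subset of $B$: it is nonempty because $A \ne \emptyset$, and if $x,y \in A$ then $q_L\big((x-b)-(y-b)\big) = q_L(x-y) \ge 0$, so $L$--positivity is unaffected by a common translation of all points.

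Next, I would fix an arbitrary $c \in A$. Then both $a - b$ and $c - b$ lie in $A - b$, so \Lem{ZAGlem} applies with $A_0 := A - b$, $e := a - b$ and $d := c - b$, giving
$$\|a - b\| \le \sqrt{2r_L(a - b)} + {\ts\frac52}\|c - b\|.$$
Combining this with the triangle inequality $\|a\| \le \|a - b\| + \|b\|$ yields
$$\|a\| \le \sqrt{2r_L(a - b)} + {\ts\frac52}\|c - b\| + \|b\|.$$

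Finally, I would take the infimum over $c \in A$. Since the first and last terms on the right do not depend on $c$, the infimum of the right--hand side is $\sqrt{2r_L(a-b)} + {\ts\frac52}\dist(b,A) + \|b\|$, which is exactly the asserted bound; note that no closedness of $A$ is needed, because we only pass to an infimum and not to a minimizer. I do not expect a genuine obstacle here: the single idea is the translation trick converting $\dist(b,A)$ into the $\|d\|$--term of \Lem{ZAGlem}, and the rest is the triangle inequality together with an elementary property of infima.
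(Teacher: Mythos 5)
Your proposal is correct and follows essentially the same route as the paper: translate $A$ by $b$ to form the $L$--positive set $A_0 = A - b$, apply \Lem{ZAGlem} with $e := a - b$ and $d := c - b$, take the infimum over $c \in A$, and finish with the triangle inequality. The paper's proof leaves the final triangle--inequality step implicit, but otherwise the two arguments coincide.
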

%:
\begin{proof}
Let $A_0$ be the $L$--positive set $A - b$.  Let $c \in A$.   Then $e := a - b \in A_0$ and $d := c - b \in A_0$.  From \Lem{ZAGlem},\quad $\|a - b\| \le \sqrt{2r_L(a - b)} + \frac52\|c - b\|$.\quad Taking the infimum over $c$,\quad $\|a - b\| \le \sqrt{2r_L(a - b)} + {\ts\frac52}\dist(b,A)$.
\end{proof}
%:   \Thm{ZAGEEthm}
\begin{theorem}\label{ZAGEEthm}
Let $A$ be a monotone subset of $E \times E^*$ and $(w,w^*) \in E \times E^*$.   Then there exists $M \ge 0$ such that, for all  $(s,s^*) \in A$,
$$\|(s,s^*)\| \le M + \sqrt{\|s - w\|^2 + \|s^* - w^*\|^2 + 2\bra{s - w}{s^* - w^*}}.$$
\end{theorem}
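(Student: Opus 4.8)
The statement is exactly \Thm{ZAGthm} transported to the concrete Banach SN space of \Ex{EEex}, so the plan is to specialize that theorem and unwind the definition of $r_L$ given there. First I would dispose of the trivial case: if $A = \emptyset$ the conclusion holds vacuously, so assume $A \neq \emptyset$. By the discussion immediately following \Ex{EEex}, a nonempty monotone subset $A$ of $E \times E^*$ is precisely an $L$--positive subset of the Banach SN space $(B,L)$ with $B = E \times E^*$ and $L(x,x^*) = (x^*,\wh x)$, for which $r_L(x,x^*) = \half\|x\|^2 + \half\|x^*\|^2 + \bra{x}{x^*}$.

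\textbf{Key steps.} Apply \Thm{ZAGthm} with this $A$, with $a := (s,s^*) \in A$ and $b := (w,w^*) \in B$. This yields
$$\|(s,s^*)\| \le \sqrt{2r_L\big((s,s^*) - (w,w^*)\big)} + {\ts\frac52}\dist\big((w,w^*),A\big) + \|(w,w^*)\|.$$
Next, substitute $(s,s^*) - (w,w^*) = (s - w,\,s^* - w^*)$ into the formula for $r_L$ from \Ex{EEex}:
$$2r_L\big((s,s^*) - (w,w^*)\big) = \|s - w\|^2 + \|s^* - w^*\|^2 + 2\bra{s - w}{s^* - w^*}.$$
Finally, set $M := {\ts\frac52}\dist\big((w,w^*),A\big) + \|(w,w^*)\|$. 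Since $A \neq \emptyset$, the distance is finite, so $M \ge 0$ is a constant depending only on $A$ and $(w,w^*)$ but not on the chosen point $(s,s^*) \in A$; combining the two displayed lines gives exactly the asserted inequality.

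\textbf{Main obstacle.} There is essentially no obstacle here: the result is a direct corollary of \Thm{ZAGthm} once one recognizes the $L$--positivity of $A$ via \Ex{EEex}. The only point requiring a (one-line) check is that $\dist\big((w,w^*),A\big)$ is finite, which is guaranteed by $A \neq \emptyset$; everything else is the routine algebraic identity for $2r_L$.
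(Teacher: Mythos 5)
Your proposal is correct and coincides with the paper's own proof: the paper likewise applies \Thm{ZAGthm} in the Banach SN space of \Ex{EEex} with $M = \ts\frac52\dist\big((w,w^*),A\big) + \|(w,w^*)\|$ and unwinds $2r_L\big((s,s^*)-(w,w^*)\big)$ into the displayed quadratic expression. Your extra remark about the empty case is a harmless addition.
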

\begin{proof}
This follows from \Thm{ZAGthm}, with $M = \ts\frac52\dist\big((w,w^*),A\big) + \|(w,w^*)\|$.
\end{proof}
%:   \Rem{ZAGrem}
\begin{remark}\label{ZAGrem}
\Thm{ZAGEEthm} was motivated by (and clearly generalizes) the second assertion of Zagrodny, \cite[Corollary~3.4, pp.\ 780--781]{ZAGRODNY}, which is equivalent to the following:   {\rm Let $A$ be a maximally monotone subset of $E \times E^*$ of type (NI) and $(w,w^*) \in E \times E^*$.   Then there exist $\eps_0 > 0$ and $R > 0$ such that if $0 < \eps < \eps_0$, $(s,s^*) \in A$ and
$${\|s - w\|^2 + \|s^* - w^*\|^2 + 2\bra{s - w}{s^* - w^*}} \le \eps$$
then $\|(s,s^*)\| \le R$}.   \Thm{ZAGEEthm} shows that we only need to assume that $A$ is monotone, $\eps$ can be as large as we please, and $\|(s,s^*)\|$ is bounded by a function of the form $M + \sqrt\eps$.
\end{remark}
%:bibliography


\begin{thebibliography}{50}
%
\bibitem{ARENS} R. Arens, {\em Operational calculus of linear relations}, Pacific J. Math. {\bf 11} (1961) 9--23.
%
\bibitem{BABO}  H. Bauschke and J. M. Borwein, {\em Maximal monotonicity of dense type, local maximal monotonicity, and monotonicity of the conjugate are all the same for continuous linear operators}. Pacific J. Math. {\bf 189} (1999), 1-Ð20.
%
\bibitem{BBWYLIN} H. Bauschke, J. M. Borwein, X. Wang and L. Yao {\em For maximally monotone linear relations, dense type, negative-infimum type, and Fitzpatrick-Phelps type all coincide with monotonicity of the adjoint}, http://arxiv.org/abs/1103.6239v1, posted March 31, 2011.
%
\bibitem{BBWYBB} H. Bauschke, J. M. Borwein, X. Wang and L. Yao {\sl The Brezis-Browder Theorem in a general Banach space}, Journal of Functional Analysis {\bf262}(2012), 4948--4971. http://dx.doi.org/10.1016/j.jfa.2012.03.023.
%
\bibitem{BAUSIM}H. Bauschke and S. Simons, {\em  Stronger maximal monotonicity properties of linear operators}, Bull. Austral. Math. Soc. {\bf 60} (1999), 163--174.
%
\bibitem{BB} H. Brezis and F. E. Browder, {\em  Linear maximal monotone operators and singular nonlinear integral equations of Hammerstein type}, Nonlinear analysis (collection of papers in honor of Erich H. Rothe),  pp. 31--42. Academic Press, New York, 1978.
%
\bibitem{BS} R. S. Burachik and B. F. Svaiter, {\em Maximal monotonicity, conjugation and the duality product},  Proc. Amer. Math. Soc. {\bf 131}  (2003), 2379--2383.
%
\bibitem{FITZ} S. Fitzpatrick, {\em Representing monotone operators by convex functions},   Workshop/ Miniconference on Functional Analysis and Optimization (Canberra, 1988),  59--65, Proc. Centre Math. Anal. Austral. Nat. Univ., {\bf 20}, Austral. Nat. Univ.,
Canberra, 1988.
%
\bibitem{GMS}Y. Garc\'\i a Ramos, J. E. Mart\'\i nez-Legaz and S. Simons, {\em New results on $q$--positivity}, Positivity {\bf16}(2012), 543--563.   DOI: 10.1007/s11117-012-0191-7 
%
\bibitem{GOSSEZ}J.--P. Gossez, {\em Op\'erateurs monotones non lin\'eaires dans les espaces de Banach non r\'eflexifs}, J. Math. Anal. Appl. {\bf 34} (1971), 371--395.
%
\bibitem{ASBR}M. Marques Alves and B. F. Svaiter, {\em Br\o ndsted--Rockafellar property and maximality of monotone operators representable by convex
functions in non--reflexive Banach spaces}, J. of Convex Anal., {\bf 15}(2008), 693--706.
%
\bibitem{ASNI}M. Marques Alves and B. F. Svaiter, {\em A new old class of maximal monotone operators}, J. of Convex Anal., {\bf 16}(2009), 881--890.
%
\bibitem{MLT}J.--E. Mart\'\i nez-Legaz and M. Th\'era, {\em A convex representation of maximal monotone operators},\
J. Nonlinear Convex Anal. {\bf 2} (2001), 243--247.
%
\bibitem{MOREAU}J.--J. Moreau, {\em Fonctionelles convexes}, S\'eminaire sur les \'equations aux deriv\'ees partielles, Lecture notes, Coll\`ege de France, Paris 1966.
%
\bibitem{PENOT}J.--P. Penot, {\em The relevance of convex analysis for the study of monotonicity},  Nonlinear Anal. {\bf 58}  (2004), 855--871.
%
\bibitem{RT}J. P. Revalski and M. Th\'era, {\em Enlargements and sums of monotone operators}, Nonlinear Anal. {\bf 48} (2002), Ser. A, 505--519.
%
\bibitem{FENCHEL}R. T. Rockafellar, {\em Extension of Fenchel's duality theorem for convex functions}, Duke Math. J. {\bf33} (1966), 81--89.
%
\bibitem{RTRMMT}-----, {\em On the maximal monotonicity of
subdifferential mappings}, Pac. J. Math {\bf 33} (1970), 209--216.
%
\bibitem{SAM}S. Simons, {\em Simultaneous Almost Minimization of Convex Functions and Duals of Results Related to Maximal Monotonicity}, J. of Convex Anal., {\bf 1/2} (1995), 359--373.
%
\bibitem{MANDM}-----, {\em Minimax and monotonicity}, 
Lecture Notes in Mathematics {\bf 1693} (1998), Springer--Verlag.
%
\bibitem{BR}-----, {\em Maximal monotone multifunctions of Br{\o}ndsted--Rockafellar type}, Set--Valued Anal. {\bf7} (1999),
255--294.
%
\bibitem{HBM}-----, {\em From Hahn--Banach to monotonicity}, 
Lecture Notes in Mathematics, {\bf 1693}, second edition, (2008), Springer--Verlag.
%
\bibitem{SSDMON}-----, {\em Banach SSD spaces and classes of monotone sets}, J. of Convex Anal., {\bf 18} (2011), 227--258.
%
\bibitem{QUAD}-----, {\em Quadrivariate existence theorems and strong representability}, Optimization {\bf 60}, (2011), 875--891.
%
\bibitem{LINPOS}-----, {\em Linear $L$--positive sets and their polar subspaces}, Set-Valued and Variational Anal. {\bf20}(2012), 603--615. DOI: 10.1007/s11228-012-0206-3.
%
\bibitem{AST}-----, {\em The Asymmetric sandwich theorem},  J. Convex Anal. 20 (2013), 107--124.
%
\bibitem{POLAR}-----, {\em Polar subspaces and automatic maximality}, Set-Valued Var. Anal {\bf 22} (2014), 259Ð-270.
DOI: 10.1007/s11228-013-0244-5
%
\bibitem{RLv1}
-----, {\em A ``density'' and maximal monotonicity}, arXiv:1407.1100v1, posted July 4, 2014.
%
\bibitem{RLv7}
-----, {\em ``Densities'' and maximal monotonicity}, arXiv:1407.1100v7, posted September 26, 2015.
%
\bibitem{MAR}-----, {\em Marginable functions on Fr\'echet spaces}, arXiv:1504.05135v1, posted April 20, 2015.
%
\bibitem{SW}S. Simons and X. Wang, {\em Ubiquitous subdifferentials, $r_L$-density and maximal monotonicity},  Set-Valued Var. Anal, DOI: 10.1007/s11228-015-0326-7, arxiv:1412.4386v2, posted January 8, 2015.
%
\bibitem{SZNZ}S. Simons and C. Z\u{a}linescu, {\em Fenchel duality, Fitzpatrick functions and maximal monotonicity}, J. of Nonlinear and  Convex Anal., {\bf 6} (2005), 1--22.
%
\bibitem{TORR}D. Torralba, {\em Convergence \'epigraphique et
changements d'\'echelle en analyse variationnelle et optimisation}, Thesis, Universit\'e Montpellier II.
%
\bibitem{VZ}M. D. Voisei and C. Z\u{a}linescu, {\em Strongly--representable operators}, J. of Convex Anal., {\bf 16} (2009), 1011--1033. 
%
\bibitem{ZAGRODNY}D. Zagrodny, {\em The convexity of the closure of the domain and the range of a maximal monotone multifunction of Type NI}, Set--Valued Anal, {\bf 16} (2008), 759--783.   DOI: 10.1007/s11228-008-0087-7
%
\bibitem{ZBOOK} C. Z\u{a}linescu, {\em Convex analysis in general vector spaces}, (2002), World Scientific.
\end{thebibliography}
\end{document}               % End of document.
Removed from RLMM95:
Appendix
Section 18
Section 17 after Problem 17.7
Section 16
Section 15
Section 14
Section 13
Section 12
\Rem{NIrem}